\date{\today}
\newtheorem{theorem}{Theorem}[section]
\newtheorem{proposition}[theorem]{Proposition}
\newtheorem{corollary}[theorem]{Corollary}
\newtheorem{lemma}[theorem]{Lemma}
\theoremstyle{definition}
\newtheorem{example}[theorem]{Example}
\newtheorem{remark}[theorem]{Remark}
\begin{document}

\title[On monoids of monotone injective partial
selfmaps of $L_n\times_{\operatorname{lex}}\mathbb{Z}$]{On monoids of monotone injective partial selfmaps of $L_n\times_{\operatorname{lex}}\mathbb{Z}$ with co-finite domains and images}

\author[O.~Gutik and I.~Pozdnyakova]{Oleg~Gutik and Inna Pozdnyakova}
\address{Faculty of Mechanics and Mathematics, Ivan Franko
National University of Lviv, Universytetska 1, Lviv, 79000, Ukraine}
\email{o\_gutik@franko.lviv.ua, ovgutik@yahoo.com, pozdnyakova.inna@gmail.com}

\keywords{Topological semigroup, semitopological semigroup, semigroup of bijective partial transformations, symmetric inverse semigroup, congruence, ideal, automorphism, homomorphism, Baire space, semigroup topologization, embedding. }

\subjclass[2010]{Primary 20M18, 20M20. Secondary 20M05, 20M15, 22A15, 54C25, 54D40, 54E52, 54H10, 54H15}

\begin{abstract}
We study the semigroup $\mathscr{I\!O}\!_{\infty}(\mathbb{Z}^n_{\operatorname{lex}})$ of monotone injective partial selfmaps of the set of $L_n\times_{\operatorname{lex}}\mathbb{Z}$ having co-finite domain and image, where $L_n\times_{\operatorname{lex}}\mathbb{Z}$ is the lexicographic product of $n$-elements chain and the set of integers with the usual order. We show that $\mathscr{I\!O}\!_{\infty}(\mathbb{Z}^n_{\operatorname{lex}})$ is bisimple and establish its projective congruences. We prove that $\mathscr{I\!O}\!_{\infty}(\mathbb{Z}^n_{\operatorname{lex}})$ is finitely generated, and for $n=1$ every automorphism of $\mathscr{I\!O}\!_{\infty}(\mathbb{Z}^n_{\operatorname{lex}})$ is inner and show that  in the case $n\geqslant 2$ the semigroup $\mathscr{I\!O}\!_{\infty}(\mathbb{Z}^n_{\operatorname{lex}})$ has non-inner automorphisms. Also we show that every Baire topology $\tau$ on $\mathscr{I\!O}\!_{\infty}(\mathbb{Z}^n_{\operatorname{lex}})$
such that $(\mathscr{I\!O}\!_{\infty}(\mathbb{Z}^n_{\operatorname{lex}}),\tau)$ is
a Hausdorff semitopological semigroup is discrete, construct a non-discrete Hausdorff semigroup inverse topology on $\mathscr{I\!O}\!_{\infty}(\mathbb{Z}^n_{\operatorname{lex}})$, and prove that the discrete semigroup
$\mathscr{I\!O}\!_{\infty}(\mathbb{Z}^n_{\operatorname{lex}})$ cannot be embedded
into some classes of compact-like topological semigroups and that its remainder under the closure in a topological semigroup $S$ is an ideal in $S$.
\end{abstract}

\maketitle


\section{Introduction and preliminaries}

In this paper all spaces will be assumed to be Hausdorff. We shall
denote the first infinite cardinal by $\omega$ and the cardinality
of the set $A$ by $|A|$. Also we denote the additive group of
integers by $\mathbb{Z}(+)$. We shall identify all sets $X$ with its
cardinality $|X|$.

An algebraic semigroup $S$ is called {\it inverse} if for any
element $x\in S$ there exists a unique $x^{-1}\in S$ such that
$xx^{-1}x=x$ and $x^{-1}xx^{-1}=x^{-1}$. The element $x^{-1}$ is
called the {\it inverse of} $x\in S$. If $S$ is an inverse
semigroup, then the function $\operatorname{inv}\colon S\to S$ which
assigns to every element $x$ of $S$ its inverse element $x^{-1}$ is
called an {\it inversion}.

If $\mathfrak{C}$ is an arbitrary congruence on a semigroup $S$,
then we denote by $\Phi_\mathfrak{C}\colon S\rightarrow
S/\mathfrak{C}$ the natural homomorphisms from $S$ onto the quotient
semigroup $S/\mathfrak{C}$. A congruence $\mathfrak{C}$ on a
semigroup $S$ is called \emph{non-trivial} if $\mathfrak{C}$ is
distinct from universal and identity congruence on $S$, and
\emph{group} if the quotient semigroup $S/\mathfrak{C}$ is a group.
Every inverse semigroup $S$ admits a least (minimum) group
congruence $\sigma$:
\begin{equation*}
    a\sigma b \; \hbox{ if and only if there exists }\;
    e\in E(S) \; \hbox{ such that }\; ae=be
\end{equation*}
(see \cite[Lemma~III.5.2]{Petrich1984})

If $S$ is a semigroup, then we shall denote the subset of
idempotents in $S$ by $E(S)$. If $S$ is an inverse semigroup, then
$E(S)$ is closed under multiplication and we shall refer to $E(S)$ a
\emph{band} (or the \emph{band of} $S$). If the band $E(S)$ is a
non-empty subset of $S$, then the semigroup operation on $S$
determines the following partial order $\leqslant$ on $E(S)$:
$e\leqslant f$ if and only if $ef=fe=e$. This order is called the
{\em natural partial order} on $E(S)$. A \emph{semilattice} is a
commutative semigroup of idempotents. A semilattice $E$ is called
{\em linearly ordered} or a \emph{chain} if its natural order is a
linear order. A \emph{maximal chain} of a semilattice $E$ is a chain
which is properly contained in no other chain of $E$.

The Axiom of Choice implies the existence of maximal chains in any
partially ordered set. According to
\cite[Definition~II.5.12]{Petrich1984} a chain $L$ is called an
$\omega$-chain if $L$ is isomorphic to $\{0,-1,-2,-3,\ldots\}$ with
the usual order $\leqslant$. Let $E$ be a semilattice and $e\in E$.
We denote ${\downarrow} e=\{ f\in E\mid f\leqslant e\}$ and
${\uparrow} e=\{ f\in E\mid e\leqslant f\}$.  By
$(\mathscr{P}_{<\omega}(\lambda),\subseteq)$ we shall denote the
\emph{free semilattice with identity} over a set of cardinality
$\lambda\geqslant\omega$, i.e.,
$(\mathscr{P}_{<\omega}(\lambda),\subseteq)$ is the set of all
finite subsets (with the empty set) of $\lambda$ with the
semilattice operation ``union''.

If $S$ is a semigroup, then we shall denote the Green relations on
$S$ by $\mathscr{R}$, $\mathscr{L}$, $\mathscr{J}$, $\mathscr{D}$
and $\mathscr{H}$ (see \cite[Section~2.1]{CP}):
\begin{align*}
    &\qquad a\mathscr{R}b \mbox{ if and only if } aS^1=bS^1;\\
    &\qquad a\mathscr{L}b \mbox{ if and only if } S^1a=S^1b;\\
    &\qquad a\mathscr{J}b \mbox{ if and only if } S^1aS^1=S^1bS^1;\\
    &\qquad \mathscr{D}=\mathscr{L}\circ\mathscr{R}=
          \mathscr{R}\circ\mathscr{L};\\
    &\qquad \mathscr{H}=\mathscr{L}\cap\mathscr{R}.
\end{align*}
A semigroup $S$ is called \emph{simple} if $S$ does not contain any
proper two-sided ideals and \emph{bisimple} if $S$ has a unique $\mathscr{D}$-class.

For a non-empty subset $A$ of an inverse semigroup $S$ we say that $A$ \emph{generates} $S$ as an inverse semigroup, if the intersection of all inverse subsemigroups of $S$ whose contains $A$ coincides with $S$. In this case we write $\langle A\rangle=S$ and call $A$ to be a \emph{set of generators of} $S$ as an inverse semigroup.

An automorphism $\mathfrak{f}\colon S\rightarrow S$ of a semigroup $S$ with a non-empty group of units $H_1$ is called \emph{inner} if there exists $a\in H_1$ such that $(s)\mathfrak{f}=asa^{-1}$ for all $s\in S$.

A {\it semitopological} (resp. \emph{topological}) {\it semigroup}
is a Hausdorff topological space together with a separately (resp.
jointly) continuous semigroup operation. An inverse topological
semigroup with the continuous inversion is called a
\emph{topological inverse semigroup}. A Hausdorff topology $\tau$ on
a (inverse) semigroup $S$ such that $(S,\tau)$ is a topological
(inverse) semigroup is called a (\emph{inverse}) \emph{semigroup
topology}.

If $\alpha\colon X\rightharpoonup Y$ is a partial map, then by
$\operatorname{dom}\alpha$ and $\operatorname{ran}\alpha$ we denote
the domain and the range of $\alpha$, respectively.

Let $\mathscr{I}_\lambda$ denote the set of all partial one-to-one
transformations of an infinite set $X$ of cardinality $\lambda$
together with the following semigroup operation:
$x(\alpha\beta)=(x\alpha)\beta$ if
$x\in\operatorname{dom}(\alpha\beta)=\{
y\in\operatorname{dom}\alpha\mid
y\alpha\in\operatorname{dom}\beta\}$,  for
$\alpha,\beta\in\mathscr{I}_\lambda$. The semigroup
$\mathscr{I}_\lambda$ is called the \emph{symmetric inverse
semigroup} over the set $X$~(see \cite[Section~1.9]{CP}). The
symmetric inverse semigroup was introduced by
Vagner~\cite{Wagner1952} and it plays a major role in the theory of
semigroups. An element $\alpha\in\mathscr{I}_\lambda$ is called \emph{cofinite}, if the sets $\lambda\setminus\operatorname{dom}\alpha$ and $\lambda\setminus\operatorname{ran}\alpha$ are finite.

Let $(X,\leqslant)$ be a partially ordered set. We shall say that a partial map
$\alpha\colon X\rightharpoonup X$ is \emph{monotone} if $x\leqslant y$ implies $(x)\alpha\leqslant(y)\alpha$ for $x,y\in X$.

Let $\mathbb{Z}$ be the set of integers with the usual linear order
$\le$. For any positive integer $n$ by $L_n$ we denote the set $\{1,\ldots,n\}$ with the usual linear order $\le$. On the Cartesian product $L_n\times\mathbb{Z}$ we define the lexicographic order, i.e.,
\begin{equation*}
    (i,m)\leqslant(j,n) \qquad \hbox{if and only if} \qquad (i<j) \quad \hbox{or} \quad (i=j \quad\hbox{and} \quad m\le n).
\end{equation*}
Later the set $L_n\times\mathbb{Z}$ with the lexicographic order we denote by $L_n\times_{\operatorname{lex}}\mathbb{Z}$. Also, it is obvious that the $\mathbb{Z}\times L_n$ with the lexicographic order is order isomorphic to $(\mathbb{Z},\le)$.

By $\mathscr{I\!O}\!_{\infty}(\mathbb{Z}^n_{\operatorname{lex}})$ we denote a subsemigroup of injective partial monotone selfmaps of $L_n\times_{\operatorname{lex}}\mathbb{Z}$ with co-finite domains and images.
Obviously, $\mathscr{I\!O}\!_{\infty}(\mathbb{Z}^n_{\operatorname{lex}})$ is an
inverse submonoid of the semigroup $\mathscr{I}_\omega$ and $\mathscr{I\!O}\!_{\infty}(\mathbb{Z}^n_{\operatorname{lex}})$ is a countable semigroup. Also, by $\mathscr{I\!O}\!_{\infty}(\mathbb{Z})$ we denote a subsemigroup of injective partial monotone selfmaps of $\mathbb{Z}$ with cofinite domains and images.

Furthermore, we shall denote the identity of the semigroup
$\mathscr{I\!O}\!_{\infty}(\mathbb{Z}^n_{\operatorname{lex}})$ by $\mathbb{I}$ and
the group of units of
$\mathscr{I\!O}\!_{\infty}(\mathbb{Z}^n_{\operatorname{lex}})$ by $H(\mathbb{I})$.

For a topological space $X$, a family $\{A_s\mid s\in\mathscr{A}\}$
of subsets of $X$ is called \emph{locally finite} if for every point
$x\in X$ there exists an open neighbourhood $U$ of $x$ in $X$ such
that the set $\{s\in\mathscr{A}\mid U\cap A_s\}$ is finite. A subset
$A$ of $X$ is said to be
\begin{itemize}
  \item \emph{co-dense} on $X$ if $X\setminus A$ is dense in $X$;
  \item an \emph{$F_\sigma$-set} in $X$ if $A$ is a union of a countable family of closed subsets in $X$.
\end{itemize}

We recall that a topological space $X$ is said to be
\begin{itemize}
  \item \emph{compact} if each open cover of $X$ has a finite
   subcover;
  \item \emph{countably compact} if each open countable cover of
   $X$ has a finite subcover;
  \item \emph{pseudocompact} if each locally finite open cover of
  $X$ is finite;
  \item a \emph{Baire space} if for each
sequence $A_1, A_2,\ldots, A_i,\ldots$ of nowhere dense subsets of $X$ the union $\bigcup_{i=1}^\infty A_i$ is a co-dense subset of $X$;
  \item  \emph{\v{C}ech
   complete} if $X$ is Tychonoff and for every compactification
   $cX$ of $X$ the remainder $cX\setminus X$ is an $F_\sigma$-set
   in $cX$;
  \item  \emph{locally compact} if every point of $X$ has an open
   neighbourhood with the compact closure.
\end{itemize}
According to Theorem~3.10.22 of \cite{Engelking1989}, a Tychonoff
topological space $X$ is pseudocompact if and only if each
continuous real-valued function on $X$ is bounded.

It is well known that topological algebra studies the influence of
topological properties of its objects on their algebraic properties
and the influence of algebraic properties of its objects on their
topological properties. There are two main problems in topological
algebra: the problem of non-discrete topologization and the problem
of embedding into objects with some topological-algebraic
properties.

In mathematical literature the question about non-discrete
(Hausdorff) topologization was posed by Markov \cite{Markov1945}.
Pontryagin gave well known conditions a base at the unity of a group
for its non-discrete topologization (see Theorem~4.5 of
\cite{HewittRoos1963} or Theorem~3.9 of \cite{Pontryagin1966}).
Various authors have refined Markov's question: can a given infinite
group $G$ endowed with a non-discrete group topology be embedded
into a compact topological group? Again, for an arbitrary Abelian
group $G$ the answer is affirmative, but there is a non-Abelian
topological group that cannot be embedded into any compact
topological group ({see Section~9 of \cite{HBSTT}}).

Also, Ol'shanskiy \cite{Olshansky1980} constructed an infinite
countable group $G$ such that every Hausdorff group topology on $G$
is discrete. Eberhart and Selden showed in \cite{EberhartSelden1969}
that every Hausdorff semigroup topology on the bicyclic semigroup
$\mathscr{C}(p,q)$ is discrete. Bertman and West proved in
\cite{BertmanWest1976} that every Hausdorff topology $\tau$ on
$\mathscr{C}(p,q)$ such that $(\mathscr{C}(p,q),\tau)$ is a
semitopological semigroup is also discrete. Taimanov gave in
\cite{Taimanov1975} sufficient conditions on a commutative semigroup
to have a non-discrete semigroup topology.

Many mathematiciants have studied the problems of embeddings of
topological semigroups into compact or compact-like topological
semigroups (see \cite{CHK}). Neither stable nor $\Gamma$-compact
topological semigroups can contain a copy of the bicyclic
semigroup~\cite{AHK, HildebrantKoch1988}. Also, the bicyclic
semigroup cannot be embedded into any countably compact topological
inverse semigroup~\cite{GutikRepovs2007}. Moreover, the conditions
were given in \cite{BanakhDimitrovaGutik2009} and
\cite{BanakhDimitrovaGutik2010} when a countably compact or
pseudocompact topological semigroup cannot contain the bicyclic
semigroup.

However, Banakh, Dimitrova and Gutik~\cite{BanakhDimitrovaGutik2010}
have constructed (assuming the Continuum Hypothesis or Martin
Axiom) an example of a Tychonoff countably compact topological
semigroup which contains the bicyclic semigroup. The problems of
topologization of semigroups of partial transformations and their
embeddings into compact-like semigroup were studied in
\cite{GutikPavlyk2005, GutikPavlykReiter2009, GutikReiter2009, GutikReiter2010}.

Doroshenko in \cite{Doroshenko2005, Doroshenko2009} studied the semigroups of endomorphisms of linearly ordered sets $\mathbb{N}$ and $\mathbb{Z}$ and their subsemigroups of cofinite endomorphisms. In \cite{Doroshenko2009} he described the Green relations, groups of automorphisms, conjugacy, centralizers of elements, growth, and free subsemigroups in these subgroups. In \cite{Doroshenko2005} there was shown  that  both these semigroups do not admit an irreducible system of generators. In their  subsemigroups of cofinite functions all irreducible systems of generators are
described there. Also, here the last semigroups are presented in terms of generators and relations.

Gutik and Repov\v{s} in \cite{GutikRepovs2011} showed that the semigroup
$\mathscr{I}_{\infty}^{\!\nearrow}(\mathbb{N})$ of partial cofinite
monotone injective transformations of the set of positive integers
$\mathbb{N}$ has algebraic properties similar to those of the
bicyclic semigroup: it is bisimple and all of its non-trivial
semigroup homomorphisms are either isomorphisms or group
homomorphisms. There were proved that every locally compact topology $\tau$
on $\mathscr{I}_{\infty}^{\!\nearrow}(\mathbb{N})$ such that
$(\mathscr{I}_{\infty}^{\!\nearrow}(\mathbb{N}),\tau)$ is a
topological inverse semigroup, is discrete and the
closure of $(\mathscr{I}_{\infty}^{\!\nearrow}(\mathbb{N}),\tau)$ in
a topological semigroup was described.

In \cite{GutikRepovs2012} Gutik and Repov\v{s} studied the semigroup
$\mathscr{I}_{\infty}^{\!\nearrow}(\mathbb{Z})$ of partial cofinite
monotone injective transformations of the set of integers
$\mathbb{Z}$ and they showed that
$\mathscr{I}^{\!\nearrow}_{\infty}(\mathbb{Z})$ is bisimple and all
of its non-trivial semigroup homomorphisms are either isomorphisms
or group homomorphisms. Also they proved that every Baire
topology $\tau$ on $\mathscr{I}^{\!\nearrow}_{\infty}(\mathbb{Z})$
such that $(\mathscr{I}^{\!\nearrow}_{\infty}(\mathbb{Z}),\tau)$ is
a Hausdorff semitopological semigroup is discrete and construct a
non-discrete Hausdorff semigroup inverse topology $\tau_W$ on
$\mathscr{I}^{\!\nearrow}_{\infty}(\mathbb{Z})$.

In this paper we study the semigroup
$\mathscr{I\!O}\!_{\infty}(\mathbb{Z}^n_{\operatorname{lex}})$. We describe
Green's relations on $\mathscr{I\!O}\!_{\infty}(\mathbb{Z}^n_{\operatorname{lex}})$,
show that the semigroup $\mathscr{I\!O}\!_{\infty}(\mathbb{Z}^n_{\operatorname{lex}})$ is
bisimple and establish its projective congruences. We prove that $\mathscr{I\!O}\!_{\infty}(\mathbb{Z}^n_{\operatorname{lex}})$ is finitely generated, every automorphism of $\mathscr{I\!O}\!_{\infty}(\mathbb{Z})$ is inner and show that  in the case $n\geqslant 2$ the semigroup $\mathscr{I\!O}\!_{\infty}(\mathbb{Z}^n_{\operatorname{lex}})$ has non-inner automorphisms. Also we prove that every Baire topology $\tau$ on $\mathscr{I\!O}\!_{\infty}(\mathbb{Z}^n_{\operatorname{lex}})$
such that $(\mathscr{I\!O}\!_{\infty}(\mathbb{Z}^n_{\operatorname{lex}}),\tau)$ is
a Hausdorff semitopological semigroup is discrete and construct a non-discrete Hausdorff semigroup inverse topology on $\mathscr{I\!O}\!_{\infty}(\mathbb{Z}^n_{\operatorname{lex}})$. We show that the discrete semigroup
$\mathscr{I\!O}\!_{\infty}(\mathbb{Z}^n_{\operatorname{lex}})$ cannot be embedded
into some classes of compact-like topological semigroups and that its remainder under the closure in a topological semigroup $S$ is an ideal in $S$.


\section{Algebraic properties of the semigroup
$\mathscr{I\!O}\!_{\infty}(\mathbb{Z}^n_{\operatorname{lex}})$}\label{section-2}

\begin{lemma}\label{lemma-2.1}
Let $n$ be any positive integer $\ge 2$, $\alpha\in\mathscr{I\!O}\!_{\infty}(\mathbb{Z}^n_{\operatorname{lex}})$ and $(i,l)\alpha=(j,m)$. Then $i=j$.
\end{lemma}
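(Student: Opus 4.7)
My plan is to exploit the order-theoretic structure of $L_n\times_{\operatorname{lex}}\mathbb{Z}$: it decomposes into $n$ blocks $\{k\}\times\mathbb{Z}$, each of order type $\mathbb{Z}$, separated by exactly $n-1$ ``jumps'' where neither the lower piece has a maximum nor the upper piece has a minimum. These $n-1$ jumps are a genuine order invariant, and I will recover the first coordinate of a point from the number of such jumps lying below it.

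First I would note that because $\alpha$ is injective and monotone on a linearly ordered set, $x<y$ forces $(x)\alpha<(y)\alpha$; hence $\alpha\colon\operatorname{dom}\alpha\to\operatorname{ran}\alpha$ is an order-isomorphism of linearly ordered sets. Since $\operatorname{dom}\alpha$ and $\operatorname{ran}\alpha$ are co-finite in $L_n\times_{\operatorname{lex}}\mathbb{Z}$, for each $k\in\{1,\ldots,n\}$ both $\operatorname{dom}\alpha\cap(\{k\}\times\mathbb{Z})$ and $\operatorname{ran}\alpha\cap(\{k\}\times\mathbb{Z})$ are obtained from $\{k\}\times\mathbb{Z}$ by deleting finitely many points, and hence remain order-isomorphic to $\mathbb{Z}$.

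Next I would introduce the following intrinsic invariant on any linearly ordered set $P$: for $x\in P$, let $c(x)$ be the number of partitions $P=A\sqcup B$ with $A<B$, $x\in B$, such that $A$ has no maximum and $B$ has no minimum. Because $c(x)$ is defined purely in terms of the order, any order-isomorphism $P\to P'$ sends $x$ to a point with the same value of $c$. The key step is to show that in $P=\operatorname{dom}\alpha$ (and the same for $\operatorname{ran}\alpha$) the partitions counted by $c$ are exactly the $n-1$ block boundaries $A_k=P\cap(\{1,\ldots,k\}\times\mathbb{Z})$, $B_k=P\cap(\{k+1,\ldots,n\}\times\mathbb{Z})$ for $k=1,\ldots,n-1$. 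For this, if $(A,B)$ is such a partition and some slice $P\cap(\{k\}\times\mathbb{Z})$ met both $A$ and $B$, then $A\cap(\{k\}\times\mathbb{Z})$ would be a nonempty initial segment of a copy of $\mathbb{Z}$ bounded above, hence would have a maximum, forcing $A$ to have a maximum; so each block slice lies wholly in $A$ or wholly in $B$, and the ordering forces $A$ to be a union of the first few slices.

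With this characterization, for $(i,l)\in\operatorname{dom}\alpha$ exactly $i-1$ of the block boundaries sit below $(i,l)$, so $c((i,l))=i-1$ in $\operatorname{dom}\alpha$; similarly $c((j,m))=j-1$ in $\operatorname{ran}\alpha$. Applying the order-isomorphism $\alpha$ to $(i,l)\alpha=(j,m)$ gives $i-1=j-1$, i.e., $i=j$. The only nontrivial step is the characterization of the $n-1$ cuts of a co-finite subset of $L_n\times_{\operatorname{lex}}\mathbb{Z}$, and I expect this to be the main (but still mild) obstacle; the rest is a bookkeeping argument.
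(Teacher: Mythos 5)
Your argument is correct, and it takes a genuinely different route from the paper. The paper proves the lemma by induction on the first coordinate $i$: assuming blocks $1,\dots,p-1$ already map into themselves, it supposes $(p,l)\alpha=(j,m)$ with $j>p$, notes that then all domain points $(p,k)$ with $k\ge l$ (and all higher blocks) map above $(j,m)$, and concludes that the range misses infinitely many points, contradicting cofiniteness. You instead observe that a monotone injection of a chain is an order isomorphism of $\operatorname{dom}\alpha$ onto $\operatorname{ran}\alpha$ and count an intrinsic order invariant: the number of cuts $A\sqcup B$ with $A<B$, $A$ without maximum and $B$ without minimum, lying below a given point. Your key step — that in a cofinite subset of $L_n\times_{\operatorname{lex}}\mathbb{Z}$ no such cut can split a slice $\{k\}\times\mathbb{Z}$, because a nonempty initial segment of a $\mathbb{Z}$-type chain bounded above has a maximum — is sound, and it identifies the jumps with the $n-1$ block boundaries, giving $c((i,l))=i-1$ in the domain and $c((j,m))=j-1$ in the range, hence $i=j$. (Only a cosmetic point: make explicit that you count cuts with both parts nonempty; allowing $A=\varnothing$ would shift both counts by the same amount and not affect the conclusion.) What your approach buys is a conceptual, induction-free proof that makes transparent \emph{why} the block index is preserved — it is an order invariant of the point inside any cofinite subset — and it dovetails with Proposition~\ref{proposition-2.2} of the paper; the paper's induction is more elementary and stays closer to the explicit cofiniteness bookkeeping, but its contradiction step (that the range complement becomes infinite) is left rather terse, whereas your invariant argument localizes all the work in one clean structural claim.
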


\begin{proof}
We shall show the assertion of the lemma by induction. Let $i=1$. Suppose the contrary: there exists an integer $l$ such that $(1,l)\alpha=(j,m)$ and $j\ge 2$. Then the injectivity and monotonicity of $\alpha$ imply that $(1,k)\alpha\geqslant(j,m)$ for every integer $k\ge l$. This contradicts the cofinality of $\alpha$, and hence we get $j=1$.

Next we shall prove that if the assertion of the lemma is true for all positive integers $i<p$, where $p\le n$, then it is true for $i=p$. Suppose to the contrary that there exists an integer $l$ such that $(p,l)\alpha=(j,m)$ and $j>p$. Then the injectivity and monotonicity of $\alpha$ imply that $(p,k)\alpha\geqslant(j,m)$ for every integer $k\ge l$. By assumption of induction we get that the set $(L_n\times\mathbb{Z})\setminus\operatorname{ran}\alpha$ is infinite, which contradicts the cofinality of $\alpha$. The obtained contradiction implies the equality $j=p$. This completes the proof of the lemma.
\end{proof}

\begin{proposition}\label{proposition-2.2}
Let $n$ be any positive integer $\ge 2$. Then every two cofinite subset of $L_n\times_{\operatorname{lex}}\mathbb{Z}$ are order isomorphic.
\end{proposition}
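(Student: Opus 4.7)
The plan is to show that \emph{every} cofinite subset of $L_n\times_{\operatorname{lex}}\mathbb{Z}$ is order isomorphic to $L_n\times_{\operatorname{lex}}\mathbb{Z}$ itself; the proposition then follows by transitivity of order isomorphism. Let $A\subseteq L_n\times_{\operatorname{lex}}\mathbb{Z}$ be cofinite, with finite complement $F$.

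First I would observe that for each $i\in\{1,\ldots,n\}$ the ``row'' $A_i:=A\cap(\{i\}\times\mathbb{Z})$ is a cofinite subset of $\{i\}\times\mathbb{Z}$, because only finitely many points of the whole space are missing from $A$. Identifying $\{i\}\times\mathbb{Z}$ with $(\mathbb{Z},\le)$, this reduces to the basic fact that a cofinite subset $B$ of $\mathbb{Z}$ is order isomorphic to $\mathbb{Z}$: $B$ is unbounded above and below (since only finitely many elements have been removed) and between any two of its elements there are only finitely many elements of $B$, so listing $B$ in increasing order as $\{\ldots,b_{-1},b_0,b_1,\ldots\}$ yields an order isomorphism $\mathbb{Z}\to B$, $k\mapsto b_k$. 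Pick such an isomorphism $\varphi_i\colon\mathbb{Z}\to A_i$ for each $i$.

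Next I would glue these row isomorphisms together. Because the order on $L_n\times_{\operatorname{lex}}\mathbb{Z}$ is lexicographic, every element of $\{i\}\times\mathbb{Z}$ is strictly less than every element of $\{j\}\times\mathbb{Z}$ whenever $i<j$, and the same is true after restriction to $A$. Hence the map
\begin{equation*}
\Phi\colon L_n\times_{\operatorname{lex}}\mathbb{Z}\to A, \qquad (i,k)\mapsto \varphi_i(k),
\end{equation*}
is a bijection, and it preserves order both within each row (by the choice of $\varphi_i$) and between rows (by the lexicographic structure). Thus $\Phi$ is an order isomorphism.

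Finally, given two cofinite subsets $A,B\subseteq L_n\times_{\operatorname{lex}}\mathbb{Z}$, the composition of the isomorphism $A\to L_n\times_{\operatorname{lex}}\mathbb{Z}$ (inverse to the $\Phi$ built for $A$) with the isomorphism $L_n\times_{\operatorname{lex}}\mathbb{Z}\to B$ (built for $B$) is the desired order isomorphism $A\to B$. The only point that requires care is the row-by-row argument: one must use that removing finitely many integers from $\mathbb{Z}$ leaves a copy of $\mathbb{Z}$, and that the lex structure does \emph{not} introduce any comparisons across rows that could be disturbed by passing to a cofinite subset. Both of these are straightforward, so I expect no substantial obstacle beyond making these observations precise.
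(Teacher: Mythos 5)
Your proof is correct and follows essentially the same route as the paper: decompose a cofinite set row by row, use that a cofinite subset of $\mathbb{Z}$ is order isomorphic to $\mathbb{Z}$, and glue the row isomorphisms using the lexicographic structure. The only cosmetic difference is that you pass through $L_n\times_{\operatorname{lex}}\mathbb{Z}$ itself and compose, whereas the paper matches the rows of the two cofinite sets directly.
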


\begin{proof}
The statement of the lemma is trivial in the case when $n=1$. Let $A$ and $B$ are cofinite subset of $L_n\times_{\operatorname{lex}}\mathbb{Z}$. Then for every $i=1,\ldots,n$, the sets $A\cap (\{i\}\times\mathbb{Z})$ and $B\cap (\{i\}\times\mathbb{Z})$ are cofinite subsets of $\{i\}\times\mathbb{Z}$, and hence are order isomorphic. This implies that the union of their coordinatewise order isomorphisms on the first factor is an order isomorphism of $A$ and $B$.
\end{proof}

For every $i=1,\ldots,n$ we put
\begin{equation*}
    S_i=\left\{\alpha\in\mathscr{I\!O}\!_{\infty}(\mathbb{Z}^n_{\operatorname{lex}}) \colon \hbox{the restriction~} \alpha|_{(L_n\setminus\{i\})\times\mathbb{Z}} \hbox{~is an identity map}\right\}.
\end{equation*}
It is obvious that $S_i$ is an inverse submonoid of the semigroup $\mathscr{I\!O}\!_{\infty}(\mathbb{Z}^n_{\operatorname{lex}})$ for every $i=1,\ldots,n$.

\begin{proposition}\label{proposition-2.3}
Let $n$ be any positive integer $\ge 2$.  Then the following assertions hold:
\begin{itemize}
  \item[$(i)$] for every $i=1,\ldots,n$ the semigroup $S_i$ is isomorphic to $\mathscr{I\!O}\!_{\infty}(\mathbb{Z})$;
  \item[$(ii)$] $S_i\cap S_j=\{\mathbb{I}\}$ for all distinct $i,j=1,\ldots,n$;
  \item[$(iii)$] if $i\neq j$, $i,j=1,\ldots,n$, then $\alpha_i\beta_j=\beta_j\alpha_i$ for all $\alpha_i\in S_i$ and $\beta_j\in S_j$;
  \item[$(iv)$] the semigroup $\mathscr{I\!O}\!_{\infty}(\mathbb{Z}^n_{\operatorname{lex}})$ is isomorphic to the direct product $\prod_{i=1}^{n}S_i$, and hence it is isomorphic to the direct power $\left(\mathscr{I\!O}\!_{\infty}(\mathbb{Z})\right)^{n}$.
\end{itemize}
\end{proposition}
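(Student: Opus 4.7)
The plan is to reduce everything to the first-coordinate-preserving structure supplied by Lemma~\ref{lemma-2.1}. By that lemma, every $\alpha\in\mathscr{I\!O}\!_{\infty}(\mathbb{Z}^n_{\operatorname{lex}})$ maps each ``row'' $\{i\}\times\mathbb{Z}$ into itself, so $\alpha$ decomposes naturally as a tuple of cofinite monotone injective partial selfmaps of the individual rows. All four assertions then become essentially formal consequences.

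For $(i)$, I would fix $i$ and define $\Psi_i\colon S_i\to\mathscr{I\!O}\!_{\infty}(\mathbb{Z})$ by sending $\alpha\in S_i$ to the restriction $\alpha|_{\{i\}\times\mathbb{Z}}$, identified via the obvious order-isomorphism $\{i\}\times\mathbb{Z}\to\mathbb{Z}$. Since $\alpha$ is the identity off $\{i\}\times\mathbb{Z}$, cofiniteness of $\alpha$ in $L_n\times_{\operatorname{lex}}\mathbb{Z}$ is equivalent to cofiniteness of its restriction in $\mathbb{Z}$, and by Lemma~\ref{lemma-2.1} the restriction actually has image inside $\{i\}\times\mathbb{Z}$, so $\Psi_i$ is well defined and clearly a semigroup homomorphism. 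The inverse extends a monotone injective partial cofinite selfmap of $\mathbb{Z}$ by the identity on the other rows; monotonicity on the whole lexicographic product is automatic because distinct rows are order-separated.

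For $(ii)$, an element of $S_i\cap S_j$ with $i\neq j$ is identity off $\{i\}\times\mathbb{Z}$ and off $\{j\}\times\mathbb{Z}$, hence identity everywhere. For $(iii)$, if $\alpha_i\in S_i$ and $\beta_j\in S_j$ with $i\neq j$, then on the row $\{i\}\times\mathbb{Z}$ the map $\beta_j$ is the identity, on $\{j\}\times\mathbb{Z}$ the map $\alpha_i$ is the identity, and on the remaining rows both are the identity; a row-by-row check gives $\alpha_i\beta_j=\beta_j\alpha_i$, with matching domains.

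For $(iv)$, I would define $\Phi\colon\mathscr{I\!O}\!_{\infty}(\mathbb{Z}^n_{\operatorname{lex}})\to\prod_{i=1}^n S_i$ by $\Phi(\alpha)=(\alpha^{(1)},\ldots,\alpha^{(n)})$, where $\alpha^{(i)}$ coincides with $\alpha$ on $\{i\}\times\mathbb{Z}$ (which lands in $\{i\}\times\mathbb{Z}$ by Lemma~\ref{lemma-2.1}) and equals the identity elsewhere. The inverse sends $(\alpha_1,\ldots,\alpha_n)$ to the product $\alpha_1\cdots\alpha_n$, unambiguous by $(iii)$. Verifying that $\Phi$ is a homomorphism again reduces to a row-by-row computation: on $\{i\}\times\mathbb{Z}$ the composition $\alpha\beta$ acts as $\alpha^{(i)}\beta^{(i)}$, because $\alpha$ keeps row $i$ inside row $i$. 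Combining with $(i)$ then yields $\mathscr{I\!O}\!_{\infty}(\mathbb{Z}^n_{\operatorname{lex}})\cong(\mathscr{I\!O}\!_{\infty}(\mathbb{Z}))^n$. The only step requiring care is bookkeeping of domains and ranges in the composition/decomposition, making sure cofiniteness on the full product corresponds exactly to cofiniteness on each row; this is the main (though still routine) obstacle.
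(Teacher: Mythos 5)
Your proposal is correct and follows essentially the same route as the paper: the restriction map $\alpha\mapsto\alpha|_{\{i\}\times\mathbb{Z}}$ for $(i)$, the trivial checks for $(ii)$ and $(iii)$, and for $(iv)$ the componentwise decomposition $\alpha\mapsto(\alpha^{(1)},\ldots,\alpha^{(n)})$ with inverse $(\alpha_1,\ldots,\alpha_n)\mapsto\alpha_1\cdots\alpha_n$, all grounded in the row-preservation given by Lemma~\ref{lemma-2.1}. Your added remarks on cofiniteness and monotonicity across rows just make explicit the ``simple verifications'' the paper leaves to the reader.
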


\begin{proof}
$(i)$ For fixed $i=1,\ldots,n$ we identify the semigroups $S_i$ and $\mathscr{I\!O}\!_{\infty}(\mathbb{Z})$ by the map $\mathbf{F}_i\colon S_i\to \mathscr{I\!O}\!_{\infty}(\mathbb{Z})$, where $(\alpha)\mathbf{F}_i= \alpha|_{\{i\}\times\mathbb{Z}}$ is the restriction of $\alpha$ onto $\{i\}\times\mathbb{Z}$. Simple verifications show that such defined map $\mathbf{F}_i\colon S_i\to \mathscr{I\!O}\!_{\infty}(\mathbb{Z})$ is a semigroup isomorphism.

Statements $(ii)$ and $(iii)$ are trivial and follow from the definition of the semigroup $S_i$, $i=1,\ldots,n$.

$(iv)$ We define the map $\mathbf{I}\colon \mathscr{I\!O}\!_{\infty}(\mathbb{Z}^n_{\operatorname{lex}}) \to \prod_{i=1}^{n}S_i\colon \alpha\mapsto(\alpha_1,\ldots,\alpha_n)$, where \begin{equation*}
    (x)\alpha_i=
    \left\{
      \begin{array}{cl}
        (x)\alpha, & \hbox{if~} x\in\{i\}\times\mathbb{Z};\\
        x, & \hbox{otherwise,}
      \end{array}
    \right.
\end{equation*}
$i=1,\ldots,n$. Simple verifications imply that the map $\mathbf{I}_i\colon \mathscr{I\!O}\!_{\infty}(\mathbb{Z}^n_{\operatorname{lex}})\to S_i$, defined by the formula $(\alpha)\mathbf{I}_i=\alpha_i$ is a homomorphism. This implies that the map $\mathbf{I}\colon \mathscr{I\!O}\!_{\infty}(\mathbb{Z}^n_{\operatorname{lex}}) \to \prod_{i=1}^{n}S_i$ is a homomorphism. Also, for arbitrary  $\alpha_1\in S,\ldots,\alpha_n\in S_n$ we have that $(\alpha)\mathbf{I}=(\alpha_1,\ldots,\alpha_n)$, where $\alpha=\alpha_1\ldots\alpha_n$, and hence the map $\mathbf{I}$ is surjective. If $\alpha$ and $\beta$ are distinct elements of the semigroup $\mathscr{I\!O}\!_{\infty}(\mathbb{Z}^n_{\operatorname{lex}})$, then there exists a positive integer $i\in\{1,\ldots,n\}$ such that $(x)\alpha\neq(x)\beta$ for some $x\in\{i\}\times\mathbb{Z}$, and hence we have that $(x)\alpha_i\neq(x)\beta_i$. This implies that $(\alpha)\mathbf{I}\neq(\beta)\mathbf{I}$, and hence the map $\mathbf{I}\colon \mathscr{I\!O}\!_{\infty}(\mathbb{Z}^n_{\operatorname{lex}}) \to \prod_{i=1}^{n}S_i$ is an isomorphism. The last statement follows from $(i)$.
\end{proof}

\begin{proposition}\label{proposition-2.4}
Let $n$ be any positive integer. Then the following assertions hold:
\begin{itemize}
    \item[$(i)$] An element $\alpha$ of the semigroup
         $\mathscr{I\!O}\!_{\infty}(\mathbb{Z}^n_{\operatorname{lex}})$
         is an idempotent if and only if $(x)\alpha=x$ for every
         $x\in\operatorname{dom}\alpha$.

    \item[$(ii)$] If $\varepsilon,\iota\in
          E(\mathscr{I\!O}\!_{\infty}(\mathbb{Z}^n_{\operatorname{lex}}))$,
          then $\varepsilon\leqslant\iota$ if and only if
          $\operatorname{dom}\varepsilon\subseteq
          \operatorname{dom}\iota$.

    \item[$(iii)$] The semilattice
          $E(\mathscr{I\!O}\!_{\infty}(\mathbb{Z}^n_{\operatorname{lex}}))$ is
          isomorphic to $(\mathscr{P}_{<\omega}(L_n\times\mathbb{Z}),\subseteq)$ under the mapping $(\varepsilon)h=\left( L_n\times_{\operatorname{lex}} \mathbb{Z}\right)\setminus \operatorname{dom}\varepsilon$.

    \item[$(iv)$] Every maximal chain in
          $E(\mathscr{I\!O}\!_{\infty}(\mathbb{Z}^n_{\operatorname{lex}}))$ is an          $\omega$-chain.

    \item[$(v)$] $\alpha\mathscr{R}\beta$ in
         $\mathscr{I\!O}\!_{\infty}(\mathbb{Z}^n_{\operatorname{lex}})$ if and only if $\operatorname{dom}\alpha=\operatorname{dom}\beta$.

    \item[$(vi)$] $\alpha\mathscr{L}\beta$ in
         $\mathscr{I\!O}\!_{\infty}(\mathbb{Z}^n_{\operatorname{lex}})$ if and only if $\operatorname{ran}\alpha=\operatorname{ran}\beta$.

    \item[$(vii)$] $\alpha\mathscr{H}\beta$ in
         $\mathscr{I\!O}\!_{\infty}(\mathbb{Z}^n_{\operatorname{lex}})$ if and only if $\operatorname{dom}\alpha=\operatorname{dom}\beta$ and
         $\operatorname{ran}\alpha=\operatorname{ran}\beta$.

    \item[$(viii)$] For all idempotents $\varepsilon,\varphi\in
         \mathscr{I\!O}\!_{\infty}(\mathbb{Z}^n_{\operatorname{lex}})$ there exist infinitely many elements $\alpha,\beta\in
         \mathscr{I\!O}\!_{\infty}(\mathbb{Z}^n_{\operatorname{lex}})$ such that
         $\alpha\cdot\beta=\varepsilon$ and
         $\beta\cdot\alpha=\varphi$.

     \item[$(ix)$] $\mathscr{I\!O}\!_{\infty}(\mathbb{Z}^n_{\operatorname{lex}})$ is a bisimple semigroup and hence
         $\mathscr{J}=\mathscr{D}$.
\end{itemize}
\end{proposition}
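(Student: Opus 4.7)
The plan is to prove the nine assertions mostly by reducing to the standard theory of the symmetric inverse semigroup $\mathscr{I}_\omega$ (see \cite[Section~1.9]{CP}) and invoking Proposition~\ref{proposition-2.2}. For part $(i)$, I would argue that if $\alpha^{2}=\alpha$ then $\operatorname{dom}\alpha=\operatorname{dom}\alpha^{2}$, forcing $(x)\alpha\in\operatorname{dom}\alpha$ for every $x\in\operatorname{dom}\alpha$; injectivity of $\alpha$ together with $((x)\alpha)\alpha=(x)\alpha^{2}=(x)\alpha$ then gives $(x)\alpha=x$. The converse is trivial. For $(ii)$, once idempotents are identified with partial identities, the product $\varepsilon\iota$ is the partial identity on $\operatorname{dom}\varepsilon\cap\operatorname{dom}\iota$, so $\varepsilon\iota=\iota\varepsilon=\varepsilon$ is equivalent to $\operatorname{dom}\varepsilon\subseteq\operatorname{dom}\iota$. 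For $(iii)$, the map $h$ is well defined because every $\varepsilon\in E$ is cofinite; the inverse assignment sends a finite $F\subseteq L_n\times\mathbb{Z}$ to the partial identity on its complement, and $h$ reverses inclusion, so $E$ is anti-isomorphic (and, via complementation, isomorphic) to $(\mathscr{P}_{<\omega}(L_n\times\mathbb{Z}),\subseteq)$.

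For $(iv)$, a maximal chain in $(\mathscr{P}_{<\omega}(L_n\times\mathbb{Z}),\subseteq)$ is obtained by adding one element at a time starting from $\emptyset$, hence is order-isomorphic to $\omega$; pulled back through $h$ this gives a chain in $E$ of order type $\{0,-1,-2,\ldots\}$. Parts $(v)$--$(vii)$ follow from the general principle for inverse semigroups: $\alpha\mathscr{R}\beta\iff\alpha\alpha^{-1}=\beta\beta^{-1}\iff\operatorname{dom}\alpha=\operatorname{dom}\beta$, dually for $\mathscr{L}$; but one has to check that in our semigroup, $\alpha\mathscr{R}\beta$ in $\mathscr{I}_\omega$ implies $\alpha\mathscr{R}\beta$ in $\mathscr{I\!O}\!_{\infty}(\mathbb{Z}^n_{\operatorname{lex}})$. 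For this I would exhibit $\gamma\in \mathscr{I\!O}\!_{\infty}(\mathbb{Z}^n_{\operatorname{lex}})$ with $\alpha=\beta\gamma$ and $\beta=\alpha\gamma^{-1}$, namely $\gamma=\beta^{-1}\alpha$ (which is monotone and cofinite because $\alpha$ and $\beta$ are), and analogously for $\mathscr{L}$.

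The main work is part $(viii)$, which is also the key input to $(ix)$. Given two idempotents $\varepsilon,\varphi$, let $A=\operatorname{dom}\varepsilon$ and $B=\operatorname{dom}\varphi$. By Proposition~\ref{proposition-2.2} there is an order isomorphism $\alpha\colon A\to B$, which belongs to $\mathscr{I\!O}\!_{\infty}(\mathbb{Z}^n_{\operatorname{lex}})$; setting $\beta=\alpha^{-1}$ yields $\alpha\beta=\varepsilon$, $\beta\alpha=\varphi$. To produce infinitely many such pairs, I would modify $\alpha$ on a single fibre $\{i\}\times\mathbb{Z}$: for each integer $k$ I can precompose with the shift $(i,m)\mapsto(i,m+k)$ restricted so as to remain a monotone cofinite bijection with domain $A$ and range $B$ (using that $A\cap(\{i\}\times\mathbb{Z})$ and $B\cap(\{i\}\times\mathbb{Z})$ are cofinite in $\{i\}\times\mathbb{Z}$ and hence admit countably many order isomorphisms between them, parametrised by a $\mathbb{Z}$-worth of shifts of the enumerations). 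Each choice gives a distinct $\alpha$, hence infinitely many pairs $(\alpha,\beta)$. The delicate point, which I would verify carefully, is that the modified map is still cofinite and still an element of our semigroup; Lemma~\ref{lemma-2.1} guarantees we may work fibre by fibre.

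Finally, $(ix)$ is an immediate consequence of $(viii)$: taking $\varepsilon=\alpha\alpha^{-1}$, $\varphi=\beta^{-1}\beta$ for arbitrary $\alpha,\beta\in\mathscr{I\!O}\!_{\infty}(\mathbb{Z}^n_{\operatorname{lex}})$, part $(viii)$ produces $\gamma$ with $\gamma\gamma^{-1}=\varepsilon$ and $\gamma^{-1}\gamma=\varphi$, so $\alpha\mathscr{R}\gamma\mathscr{L}\beta$, proving $\mathscr{D}$ is universal. A bisimple semigroup is simple, so $\mathscr{J}$ is universal as well, and $\mathscr{J}=\mathscr{D}$. The principal obstacle, as indicated, is the careful bookkeeping in $(viii)$ that guarantees both monotonicity and cofiniteness of the constructed bijections and produces infinitely many of them; everything else is either standard inverse-semigroup theory or a transparent application of Proposition~\ref{proposition-2.2}.
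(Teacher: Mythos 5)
Your proposal is correct and follows essentially the same route as the paper: parts $(i)$--$(vii)$ by the partial-identity description of idempotents and the standard inverse-semigroup characterisation of Green's relations, part $(viii)$ from Proposition~\ref{proposition-2.2} via an order isomorphism $\alpha\colon\operatorname{dom}\varepsilon\to\operatorname{dom}\varphi$ with $\beta=\alpha^{-1}$, and $(ix)$ from $(viii)$ (the paper cites Lawson's bisimplicity criterion, you argue it directly through $(v)$ and $(vi)$). Your extra observation that the fibrewise order isomorphisms form a $\mathbb{Z}$-parametrised family, giving the ``infinitely many'' pairs, usefully fills in a detail the paper leaves implicit.
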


\begin{proof}
The proofs of assertions $(i)$--$(iv)$ are trivial and they follow from the definition of the semigroup          $\mathscr{I\!O}\!_{\infty}(\mathbb{Z}^n_{\operatorname{lex}})$.

The proofs of $(v)$-–$(vii)$ follow trivially from the fact that $\mathscr{I\!O}\!_{\infty}(\mathbb{Z}^n_{\operatorname{lex}})$ is a regular
semigroup, and by \cite[Proposition 2.4.2, Exercise 5.11.2]{Howie1995}.

Proposition~\ref{proposition-2.2} implies assertion $(viii)$. Assertion $(ix)$ follows from $(viii)$ and Proposition~3.2.5$(1)$ of \cite{Lawson1998}.
\end{proof}

By Lemma~\ref{lemma-2.1}, for every $\alpha\in\mathscr{I\!O}\!_{\infty}(\mathbb{Z}^n_{\operatorname{lex}})$ and any $(i,k)\in\operatorname{dom}\alpha\subseteq L_n\times\mathbb{Z}$ there exists an integer $(k)\alpha^i$ such that $(i,k)\alpha=(i,(k)\alpha^i)$. This implies that the notion $(k)\alpha^i$ well-defined for every $\alpha\in\mathscr{I\!O}\!_{\infty}(\mathbb{Z}^n_{\operatorname{lex}})$ and any $(i,k)\in\operatorname{dom}\alpha$. Also, later we shall identify $\alpha^i$ with the restriction $\alpha|_{\{i\}\times\mathbb{Z}}$ of $\alpha$ on the set $\{i\}\times\mathbb{Z}$. This makes to possible to consider $\alpha^i$ as an element of the semigroup $\mathscr{I\!O}\!_{\infty}(\mathbb{Z})$.

\begin{lemma}\label{lemma-2.5}
Let $n$ be any positive integer. Then a partial injective monotone selfmap $\alpha$ of $L_n\times_{\operatorname{lex}}\mathbb{Z}$ is an element of the semigroup
$\mathscr{I\!O}\!_{\infty}(\mathbb{Z}^n_{\operatorname{lex}})$ if and only if there exist integers $d_\alpha$ and $u_\alpha$ such that for any $i=1,\ldots,n$ the following conditions hold:
\begin{equation*}
    (i,k-1)\alpha=(i,(k-1)\alpha^i)=(i,(k)\alpha^i-1) \quad \mbox{and} \quad
    (i,l+1)\alpha=(i,(l+1)\alpha^i)=(i,(l)\alpha^i+1),
\end{equation*}
for all integers $k\leqslant d_\alpha$ and $l\geqslant u_\alpha$. Moreover $\alpha\in H(\mathbb{I})$ in $\mathscr{I\!O}\!_{\infty}(\mathbb{Z}^n_{\operatorname{lex}})$ if and only if
\begin{equation*}
(i,m+1)\alpha=(i,(m+1)\alpha^i)=(i,(m)\alpha^i+1),
\end{equation*}
for any $i=1,\ldots,n$ and any integer $m$.
\end{lemma}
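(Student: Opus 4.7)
The plan is to analyze each slice $\{i\}\times\mathbb{Z}$ separately via Lemma~\ref{lemma-2.1}, which justifies writing $\alpha^i$ for the restriction of $\alpha$ to $\{i\}\times\mathbb{Z}$, and then to observe that a cofinite monotone injective partial map on a single copy of $\mathbb{Z}$ is forced to act as a translation on each of its two tails, with all non-translational behaviour confined to a bounded window.

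For necessity, I will fix $i\in\{1,\ldots,n\}$, set $F_i=(\{i\}\times\mathbb{Z})\setminus\operatorname{dom}\alpha$ and $G_i=(\{i\}\times\mathbb{Z})\setminus\operatorname{ran}\alpha$, and note that both sets are finite because $\alpha$ is cofinite. Choose any integer $d_0$ strictly smaller than the second coordinate of every point of $F_i\cup G_i$ for every $i$. For $k\leqslant d_0$ we have $(i,k-1),(i,k)\in\operatorname{dom}\alpha$, and monotonicity with injectivity gives $(k-1)\alpha^i<(k)\alpha^i$. Suppose $(k)\alpha^i>(k-1)\alpha^i+1$; then any integer $m$ with $(k-1)\alpha^i<m<(k)\alpha^i$ satisfies $(i,m)\notin\operatorname{ran}\alpha$, since a preimage would have to lie strictly between $(i,k-1)$ and $(i,k)$ in $\{i\}\times\mathbb{Z}$, which is impossible. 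Thus each such ``jump'' on the lower tail contributes at least one distinct element of $G_i$, so only finitely many jumps occur. Taking $d_\alpha$ below all of them (for all $i$ simultaneously) yields the required equality $(k-1)\alpha^i=(k)\alpha^i-1$. The argument for $u_\alpha$ is symmetric, using the upper tail.

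For sufficiency, the tail conditions directly imply $\{(i,k):k\leqslant d_\alpha\}\cup\{(i,l):l\geqslant u_\alpha\}\subseteq\operatorname{dom}\alpha$ for each $i$, so $(L_n\times\mathbb{Z})\setminus\operatorname{dom}\alpha\subseteq L_n\times\{d_\alpha+1,\ldots,u_\alpha-1\}$ is finite. Moreover, the equalities $(k-1)\alpha^i=(k)\alpha^i-1$ and $(l+1)\alpha^i=(l)\alpha^i+1$ mean that on each tail $\alpha^i$ is a shift by a fixed constant, so its image on the two tails is cofinite in $\{i\}\times\mathbb{Z}$, whence $\operatorname{ran}\alpha$ is cofinite in $L_n\times\mathbb{Z}$. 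Since the hypothesis already guarantees that $\alpha$ is monotone and injective, this places $\alpha$ in $\mathscr{I\!O}\!_{\infty}(\mathbb{Z}^n_{\operatorname{lex}})$.

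The claim about the group of units is then an easy specialisation: $\alpha\in H(\mathbb{I})$ iff $\operatorname{dom}\alpha=\operatorname{ran}\alpha=L_n\times\mathbb{Z}$, and by Lemma~\ref{lemma-2.1} this is equivalent to each $\alpha^i$ being a monotone bijection of $\mathbb{Z}$; a monotone bijection of $(\mathbb{Z},\leqslant)$ is necessarily a translation, which is precisely the stated condition $(m+1)\alpha^i=(m)\alpha^i+1$ for all $m\in\mathbb{Z}$. The main technical obstacle is the ``each jump consumes a missing range point'' argument in the necessity direction; once that is in place, the rest reduces to careful bookkeeping of the finite exceptional window and a symmetric treatment of the upper tail.
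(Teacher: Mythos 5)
Your proof is correct, but it takes a more self-contained route than the paper. The paper does not re-derive the tail behaviour at all: it quotes Lemma~1.1 of \cite{GutikRepovs2012}, which is precisely the $n=1$ case of the statement, and transfers it to $L_n\times_{\operatorname{lex}}\mathbb{Z}$ through the direct-product decomposition of Proposition~\ref{proposition-2.3}, finally setting $d_\alpha=\min\{d_\alpha^1,\ldots,d_\alpha^n\}$ and $u_\alpha=\max\{u_\alpha^1,\ldots,u_\alpha^n\}$; the unit case is likewise quoted from the same source. You instead argue slice by slice via Lemma~\ref{lemma-2.1} and prove the one-dimensional characterization from scratch: the key observation, that a ``jump'' $(k)\alpha^i>(k-1)\alpha^i+1$ on a tail forces a point $(i,m)$ with $(k-1)\alpha^i<m<(k)\alpha^i$ to lie outside $\operatorname{ran}\alpha$ because $(i,k-1)$ and $(i,k)$ are lexicographically consecutive, is sound, and since distinct jumps produce missing range points in pairwise disjoint intervals (strict monotonicity of $\alpha^i$), the counting against the finite set $G_i$ does bound the jumps, so choosing $d_\alpha$ (and symmetrically $u_\alpha$) uniformly over $i$ works; the sufficiency direction and the reduction of the unit case to the fact that an order-preserving bijection of $(\mathbb{Z},\leqslant)$ is a translation are also correct, if stated briefly. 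What your approach buys is independence from the cited external lemma and an explicit explanation of why cofiniteness forces eventual translation on each tail; what the paper's approach buys is brevity and consistency with its overall strategy of reducing statements about $\mathscr{I\!O}\!_{\infty}(\mathbb{Z}^n_{\operatorname{lex}})$ to known facts about $\mathscr{I\!O}\!_{\infty}(\mathbb{Z})$ via Proposition~\ref{proposition-2.3}.
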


\begin{proof}
By Lemma~1.1 from \cite{GutikRepovs2012} we have that a partial injective monotone selfmap $\alpha$ of $\mathbb{Z}$ is an element of the semigroup
$\mathscr{I\!O}\!_{\infty}(\mathbb{Z})$ if and only if there exist integers $d_\alpha$ and $u_\alpha$ such that the following conditions hold:
\begin{equation*}
    (k-1)\alpha=(k)\alpha-1 \quad \mbox{and} \quad
    (l+1)\alpha=(l)\alpha+1 \quad \mbox{for all integers }
    k\leqslant d_\alpha \mbox{ and } l\geqslant u_\alpha,
\end{equation*}
and $\alpha\in H(\mathbb{I})$ in $\mathscr{I\!O}\!_{\infty}(\mathbb{Z})$ if and only if $(m+1)\alpha=(m)\alpha+1$ for any integer $m$. Then Proposition~\ref{proposition-2.3} implies that a partial injective monotone selfmap $\alpha$ of $L_n\times_{\operatorname{lex}}\mathbb{Z}$ is an element of the semigroup
$\mathscr{I\!O}\!_{\infty}(\mathbb{Z}^n_{\operatorname{lex}})$ if and only if for every $i=1,\ldots,n$ there exist integers $d_\alpha^i$ and $u_\alpha^i$ such that
\begin{equation*}
    (i,k-1)\alpha=(i,(k-1)\alpha^i)=(i,(k)\alpha^i-1) \quad \mbox{and} \quad
    (i,l+1)\alpha=(i,(l+1)\alpha^i)=(i,(l)\alpha^i+1),
\end{equation*}
for all integers $k\leqslant d_\alpha^i$ and $l\geqslant u_\alpha^i$. We put $d_\alpha=\min\{d_\alpha^1,\ldots,d_\alpha^n\}$ and $u_\alpha=\max\{u_\alpha^1,\ldots,u_\alpha^n\}$. Simple verifications show that the integers $d_\alpha$ and $u_\alpha$ are requested.

The last statement immediately follows from Proposition~\ref{proposition-2.3} and Lemma~1.1 of~\cite{GutikRepovs2012}.
\end{proof}

The second part of Lemma~\ref{lemma-2.5} implies the following proposition:

\begin{proposition}\label{proposition-2.6}
For any positive integer $n$ the group of units $H(\mathbb{I})$ of the semigroup $\mathscr{I\!O}\!_{\infty}(\mathbb{Z}^n_{\operatorname{lex}})$ is isomorphic to the direct power $\left(\mathbb{Z}(+)\right)^n$.
\end{proposition}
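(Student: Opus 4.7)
The plan is to combine the structural decomposition in Proposition~\ref{proposition-2.3}$(iv)$ with the second part of Lemma~\ref{lemma-2.5} (and the $n=1$ case proved by Gutik and Repov\v{s}), together with the elementary fact that the group of units of a direct product of monoids is the direct product of the groups of units of the factors.

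First I would handle the base case $n=1$. By the second part of Lemma~\ref{lemma-2.5} (for $n=1$, which is simply Lemma~1.1 of \cite{GutikRepovs2012}), an element $\alpha\in\mathscr{I\!O}\!_{\infty}(\mathbb{Z})$ lies in $H(\mathbb{I})$ if and only if $\operatorname{dom}\alpha=\mathbb{Z}$ and $(m+1)\alpha=(m)\alpha+1$ for every integer $m$. By induction this forces $(m)\alpha=m+(0)\alpha$ for all $m\in\mathbb{Z}$, so $\alpha$ is the translation by the integer $k_\alpha:=(0)\alpha$. The assignment $\alpha\mapsto k_\alpha$ is easily checked to be a group homomorphism $H(\mathbb{I})\to\mathbb{Z}(+)$, and since every translation of $\mathbb{Z}$ belongs to $\mathscr{I\!O}\!_{\infty}(\mathbb{Z})$ and is invertible there, the map is a bijection. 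Thus $H(\mathbb{I})\cong\mathbb{Z}(+)$ in the case $n=1$.

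Next I would treat the case $n\geqslant 2$ by invoking Proposition~\ref{proposition-2.3}$(iv)$: the semigroup isomorphism
\begin{equation*}
\mathbf{I}\colon\mathscr{I\!O}\!_{\infty}(\mathbb{Z}^n_{\operatorname{lex}})\longrightarrow\prod_{i=1}^{n}S_i\cong\bigl(\mathscr{I\!O}\!_{\infty}(\mathbb{Z})\bigr)^{n}
\end{equation*}
is an isomorphism of monoids (it sends $\mathbb{I}$ to the tuple of identities, because each coordinate projection sends $\mathbb{I}$ to the identity of $S_i$). Under any monoid isomorphism, the group of units maps onto the group of units, and the group of units of a direct product of monoids is the direct product of the groups of units of the factors. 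Applying the $n=1$ case to each factor $\mathscr{I\!O}\!_{\infty}(\mathbb{Z})$ (via the isomorphism $\mathbf{F}_i$ of Proposition~\ref{proposition-2.3}$(i)$) then yields
\begin{equation*}
H(\mathbb{I})\;\cong\;\prod_{i=1}^{n}H(\mathbb{I}_{\mathscr{I\!O}\!_{\infty}(\mathbb{Z})})\;\cong\;\bigl(\mathbb{Z}(+)\bigr)^{n},
\end{equation*}
as required. As a sanity check, one can verify directly from the second part of Lemma~\ref{lemma-2.5} that an element $\alpha\in H(\mathbb{I})$ is uniquely determined by the tuple $\bigl((0)\alpha^1,\ldots,(0)\alpha^n\bigr)\in\mathbb{Z}^n$, since $(i,m)\alpha=(i,m+(0)\alpha^i)$ for every $i$ and $m$, and that the assignment is a group homomorphism with image all of $\mathbb{Z}^n$.

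There is really no obstacle here; the only point requiring a moment's care is to observe that the isomorphism $\mathbf{I}$ of Proposition~\ref{proposition-2.3}$(iv)$ preserves the identity, so that units in the product correspond to units in $\mathscr{I\!O}\!_{\infty}(\mathbb{Z}^n_{\operatorname{lex}})$. Everything else is a direct invocation of Proposition~\ref{proposition-2.3}$(iv)$ and Lemma~\ref{lemma-2.5}.
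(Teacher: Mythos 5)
Your proposal is correct and follows essentially the paper's route: the paper simply notes that the second part of Lemma~\ref{lemma-2.5} implies the proposition, which is exactly your concluding direct verification that a unit $\alpha$ is the tuple of translations $(i,m)\alpha=(i,m+(0)\alpha^i)$, so $H(\mathbb{I})\cong\left(\mathbb{Z}(+)\right)^n$. Your additional detour through Proposition~\ref{proposition-2.3}$(iv)$ and units of a direct product is a harmless repackaging of the same argument.
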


Theorem~2.20 of~\cite{CP}, Proposition~\ref{proposition-2.4}$(ix)$ and Proposition~\ref{proposition-2.6} imply the following corollary:

\begin{corollary}\label{corollary-2.7}
Let $n$ be any positive integer. Then every maximal subgroup of the semigroup
$\mathscr{I\!O}\!_{\infty}(\mathbb{Z}^n_{\operatorname{lex}})$ is isomorphic to
the direct power $\left(\mathbb{Z}(+)\right)^n$.
\end{corollary}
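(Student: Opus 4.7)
The plan is to apply a very short chain of already-established results rather than argue from scratch. First, I would recall that by Proposition~\ref{proposition-2.4}$(ix)$, the semigroup $\mathscr{I\!O}\!_{\infty}(\mathbb{Z}^n_{\operatorname{lex}})$ is bisimple, so it consists of a single $\mathscr{D}$-class. Every maximal subgroup of a semigroup coincides with the $\mathscr{H}$-class of some idempotent, and hence every maximal subgroup of $\mathscr{I\!O}\!_{\infty}(\mathbb{Z}^n_{\operatorname{lex}})$ lies inside this unique $\mathscr{D}$-class.

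Next, I would invoke Theorem~2.20 of \cite{CP}, which states that any two group $\mathscr{H}$-classes sitting inside the same $\mathscr{D}$-class of a semigroup are isomorphic as groups. Combined with bisimplicity, this yields that all maximal subgroups of $\mathscr{I\!O}\!_{\infty}(\mathbb{Z}^n_{\operatorname{lex}})$ are mutually isomorphic.

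Finally, it suffices to exhibit one such maximal subgroup whose structure we already understand. The group of units $H(\mathbb{I})$ is precisely the $\mathscr{H}$-class of the identity idempotent $\mathbb{I}$, and by Proposition~\ref{proposition-2.6} we have $H(\mathbb{I})\cong \left(\mathbb{Z}(+)\right)^n$. Transporting this isomorphism along the group isomorphisms supplied by Theorem~2.20 of \cite{CP} gives the desired conclusion for every maximal subgroup.

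There is essentially no obstacle here: all three ingredients (bisimplicity, the Clifford--Preston theorem on isomorphism of group $\mathscr{H}$-classes within a $\mathscr{D}$-class, and the explicit computation of $H(\mathbb{I})$) are already in hand, and the only task is to assemble them in the correct order. The only point requiring minimal care is observing that a maximal subgroup of an inverse semigroup is always the $\mathscr{H}$-class of its identity idempotent, so that Theorem~2.20 of \cite{CP} is directly applicable.
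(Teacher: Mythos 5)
Your argument is correct and is exactly the paper's proof: the corollary is stated in the paper as an immediate consequence of Theorem~2.20 of \cite{CP}, Proposition~\ref{proposition-2.4}$(ix)$ (bisimplicity), and Proposition~\ref{proposition-2.6} (the group of units is $\left(\mathbb{Z}(+)\right)^n$), which are precisely the three ingredients you assemble. No changes are needed.
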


\begin{proposition}\label{proposition-2.8}
Let $n$ be any positive integer. Then for every elements
$\alpha,\beta\in\mathscr{I\!O}\!_{\infty}(\mathbb{Z}^n_{\operatorname{lex}})$, both
sets
 $
\left\{\chi\in\mathscr{I\!O}\!_{\infty}(\mathbb{Z}^n_{\operatorname{lex}}) \colon
\alpha\cdot\chi=\beta\right\}
 $
and
 $
\{\chi\in\mathscr{I\!O}\!_{\infty}(\mathbb{Z}^n_{\operatorname{lex}}) \colon
\chi\cdot\alpha=\beta\}
 $
are finite.
\end{proposition}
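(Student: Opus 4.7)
The plan is to handle the two sets separately, noting that in each case the putative $\chi$ is forced by $\alpha$ and $\beta$ on a cofinite subset of $L_n\times\mathbb{Z}$, with only a finite set of points left undetermined. I would start with $\{\chi\in\mathscr{I\!O}\!_{\infty}(\mathbb{Z}^n_{\operatorname{lex}}):\alpha\chi=\beta\}$. Unpacking the equality of partial maps, the relation $\alpha\chi=\beta$ is equivalent to $\operatorname{dom}\beta=\{x\in\operatorname{dom}\alpha:(x)\alpha\in\operatorname{dom}\chi\}$ together with $((x)\alpha)\chi=(x)\beta$ for $x\in\operatorname{dom}\beta$. In particular $\operatorname{dom}\beta\subseteq\operatorname{dom}\alpha$, and $\chi$ is forced on the set $A:=(\operatorname{dom}\beta)\alpha$: for $y=(x)\alpha\in A$ with $x\in\operatorname{dom}\beta$ uniquely recovered via injectivity of $\alpha$, one must have $(y)\chi=(x)\beta$.

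The second step is to verify that $A$ is cofinite in $L_n\times\mathbb{Z}$. Since $\operatorname{dom}\alpha$ and $\operatorname{dom}\beta$ are both cofinite and $\operatorname{dom}\beta\subseteq\operatorname{dom}\alpha$, the set $\operatorname{dom}\alpha\setminus\operatorname{dom}\beta$ is finite; injectivity of $\alpha$ shows that $(\operatorname{dom}\alpha\setminus\operatorname{dom}\beta)\alpha$ is finite, hence $A=\operatorname{ran}\alpha\setminus(\operatorname{dom}\alpha\setminus\operatorname{dom}\beta)\alpha$ is cofinite in $\operatorname{ran}\alpha$, which itself is cofinite in $L_n\times\mathbb{Z}$.

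The third step is a finite counting argument on the complement $B:=(L_n\times\mathbb{Z})\setminus A$. For any admissible $\chi$, the restriction $\chi|_A$ coincides with the forced map above, so in particular $(A)\chi=\operatorname{ran}\beta$. Injectivity of $\chi$ then forces $\chi$ to map $B\cap\operatorname{dom}\chi$ injectively into the finite set $(L_n\times\mathbb{Z})\setminus\operatorname{ran}\beta$. Since there are only finitely many partial injections between two finite sets, the set $\{\chi:\alpha\chi=\beta\}$ is finite. For the other equation, I would use that $\mathscr{I\!O}\!_{\infty}(\mathbb{Z}^n_{\operatorname{lex}})$ is an inverse semigroup: the involution $\chi\mapsto\chi^{-1}$ turns $\chi\alpha=\beta$ into $\alpha^{-1}\chi^{-1}=\beta^{-1}$, so the second set is in bijection with the first applied to $\alpha^{-1},\beta^{-1}$, and hence is finite as well.

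I do not anticipate a serious obstacle here; the argument is essentially a cofiniteness bookkeeping exercise. The one spot to be careful is checking that $A$ and $(L_n\times\mathbb{Z})\setminus\operatorname{ran}\beta$ play the right complementary roles so that the finitely-many-partial-injections count closes the argument. Alternatively, one could invoke Lemma~\ref{lemma-2.5} to observe that each element of the semigroup is a coordinatewise translation outside a bounded region, which gives another route to the same conclusion; but the domain-counting approach above is cleaner and does not depend on the structural lemma.
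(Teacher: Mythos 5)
Your argument is correct and is essentially the paper's proof: both show that any solution $\chi$ is forced to coincide with a fixed partial map on a cofinite set (the paper gets this by composing with $\alpha^{-1}$ and restricting to $\operatorname{dom}(\alpha^{-1}\alpha)$, you by unpacking $\alpha\chi=\beta$ directly), and then observe that only finitely many completions on the finite remainder are possible. Your explicit count via partial injections into the finite complement of $\operatorname{ran}\beta$, and your use of the inversion $\chi\mapsto\chi^{-1}$ for the equation $\chi\alpha=\beta$ in place of the paper's ``similar'' remark, are only cosmetic variations.
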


\begin{proof}
We denote
$A=\{\chi\in\mathscr{I\!O}\!_{\infty}(\mathbb{Z}^n_{\operatorname{lex}}) \colon
\alpha\cdot\chi=\beta\}$ and
$B=\{\chi\in\mathscr{I\!O}\!_{\infty}(\mathbb{Z}^n_{\operatorname{lex}}) \colon
\alpha^{-1}\cdot\alpha\cdot\chi=\alpha^{-1}\cdot\beta\}$. Then
$A\subseteq B$ and the restriction of any partial map $\chi\in B$ to
$\operatorname{dom}(\alpha^{-1}\cdot\alpha)$ coincides with the
partial map $\alpha^{-1}\cdot\beta$. Since every partial map from
$\mathscr{I\!O}\!_{\infty}(\mathbb{Z}^n_{\operatorname{lex}})$ is monotone we
conclude that the set $B$ is finite and hence so is $A$. The proof of the other case is similar.
\end{proof}

The following theorem describes the least group congruence $\sigma$ on the semigroup $\mathscr{I\!O}\!_{\infty}(\mathbb{Z}^n_{\operatorname{lex}})$.

\begin{theorem}\label{theorem-2.9}
Let $n$ be any positive integer. Then the quotient semigroup
$\mathscr{I\!O}\!_{\infty}(\mathbb{Z}^n_{\operatorname{lex}})/\sigma$ is isomorphic to the direct power $\left(\mathbb{Z}(+)\right)^{2n}$.
\end{theorem}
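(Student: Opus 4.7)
The plan is to reduce the claim to the case $n=1$ via Proposition~\ref{proposition-2.3}$(iv)$ and then build an explicit isomorphism
\[
\mathscr{I\!O}\!_{\infty}(\mathbb{Z})/\sigma\;\cong\;\mathbb{Z}(+)\times\mathbb{Z}(+)
\]
using the two \emph{asymptotic shifts} furnished by Lemma~\ref{lemma-2.5}. First I would record the elementary general fact that for any finite family $\{T_i\}_{i=1}^n$ of inverse semigroups the least group congruence on the direct product $\prod T_i$ is the componentwise product of the least group congruences on the factors: the $\Leftarrow$ direction is immediate by tupling witnessing idempotents coordinate by coordinate, while $\Rightarrow$ uses $E(\prod T_i)=\prod E(T_i)$, so that a single witnessing idempotent $(e_1,\ldots,e_n)$ yields $a_ie_i=b_ie_i$ in every coordinate. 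Combined with Proposition~\ref{proposition-2.3}$(iv)$ this reduces the theorem to the case $n=1$.

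For $n=1$, Lemma~\ref{lemma-2.5} attaches to each $\alpha\in\mathscr{I\!O}\!_{\infty}(\mathbb{Z})$ a pair of integers $d_\alpha\leqslant u_\alpha$ such that $\alpha$ acts as a translation by a fixed integer $c^{-}(\alpha)$ on $(-\infty,d_\alpha]$ and as a translation by a fixed integer $c^{+}(\alpha)$ on $[u_\alpha,+\infty)$. Both constants are independent of the choice of $d_\alpha$ and $u_\alpha$, so one obtains a well-defined map
\[
\Psi\colon\mathscr{I\!O}\!_{\infty}(\mathbb{Z})\to\mathbb{Z}(+)\times\mathbb{Z}(+),\qquad \alpha\mapsto(c^{-}(\alpha),c^{+}(\alpha)).
\]
Composing two eventually-constant-shift maps produces an eventually-constant-shift map whose shift is the sum of the shifts, so $\Psi$ is a semigroup homomorphism; surjectivity is witnessed, for each $(p,q)\in\mathbb{Z}^2$, by exhibiting a single element having the prescribed shifts on a left tail and a right tail.

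It remains to identify $\ker\Psi$ with $\sigma$. One direction is automatic: every idempotent of $\mathscr{I\!O}\!_{\infty}(\mathbb{Z})$ is a partial identity by Proposition~\ref{proposition-2.4}$(i)$, hence has both shifts equal to $0$, so $\Psi$ factors through every group congruence and in particular through $\sigma$. For the reverse direction, assume $\Psi(\alpha)=\Psi(\beta)$; then there are integers $d\leqslant u$ such that $\alpha$ and $\beta$ coincide throughout $A:=(-\infty,d]\cup[u,+\infty)$. Let $\varepsilon$ be the partial identity on the cofinite set
\[
(A\cap\operatorname{dom}\alpha\cap\operatorname{dom}\beta)\alpha \;=\; (A\cap\operatorname{dom}\alpha\cap\operatorname{dom}\beta)\beta.
\]
Using injectivity of $\alpha$ and $\beta$ one verifies that $\operatorname{dom}(\alpha\varepsilon)=A\cap\operatorname{dom}\alpha\cap\operatorname{dom}\beta=\operatorname{dom}(\beta\varepsilon)$ and that the two maps agree on this domain, so $\alpha\varepsilon=\beta\varepsilon$ and $\alpha\sigma\beta$. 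The expected technical obstacle is exactly this last verification: the equality $\alpha\varepsilon=\beta\varepsilon$ must hold as partial maps rather than merely on a cofinite subset, and this is where the choice of $\varepsilon$ on the common range (so that injectivity pins down the preimage sets) matters. Combining the case $n=1$ with the reduction of the first paragraph yields the announced isomorphism $\mathscr{I\!O}\!_{\infty}(\mathbb{Z}^n_{\operatorname{lex}})/\sigma\cong(\mathbb{Z}(+))^{2n}$.
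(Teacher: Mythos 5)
Your proposal is correct and follows essentially the same route as the paper: your $\Psi$ (extended to general $n$ via Proposition~\ref{proposition-2.3}$(iv)$ together with the routine observation that the least group congruence on a finite direct product of inverse semigroups is computed componentwise) is exactly the paper's homomorphism $\textbf{H}$ recording the two asymptotic shift constants in each coordinate, and your identification of $\ker\Psi$ with $\sigma$ via the partial identity on the image of the two tails is the same witnessing-idempotent construction the paper uses. One minor wording correction: $\Psi$ factors through $\sigma$ not because it ``factors through every group congruence'' but because $\ker\Psi$ is itself a group congruence and $\sigma$ is the least one (equivalently, apply $\Psi$ to $\alpha\varepsilon=\beta\varepsilon$ and cancel in $\mathbb{Z}(+)\times\mathbb{Z}(+)$).
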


\begin{proof}
Let $\alpha$ and $\beta$ be $\sigma$-equivalent elements of the semigroup $\mathscr{I\!O}\!_{\infty}(\mathbb{Z}^n_{\operatorname{lex}})$. Then by Lemma~III.5.2 from \cite{Petrich1984} there exists an idempotent $\varepsilon_0$ in
$\mathscr{I\!O}\!_{\infty}(\mathbb{Z}^n_{\operatorname{lex}})$ such that
$\alpha\cdot\varepsilon_0=\beta\cdot\varepsilon_0$. Since
$\mathscr{I\!O}\!_{\infty}(\mathbb{Z}^n_{\operatorname{lex}})$ is an inverse
semigroup we conclude that $\alpha\cdot\varepsilon=\beta\cdot\varepsilon$ for all
$\varepsilon\in E(\mathscr{I\!O}\!_{\infty}(\mathbb{Z}^n_{\operatorname{lex}}))$
such that $\varepsilon\leqslant\varepsilon_0$. Then Lemma~\ref{lemma-2.5} implies that there exist integers $d_{\alpha}$, $u_{\alpha}$, $d_{\beta}$ and $u_{\beta}$ such that for any $i=1,\ldots,n$ the following conditions hold:
\begin{align*}
    &(i,k-1)\alpha=(i,(k-1)\alpha^i)=(i,(k)\alpha^i-1),
    &(i,l+1)\alpha=(i,(l+1)\alpha^i)=(i,(l)\alpha^i+1),\\
    &(i,k-1)\beta=(i,(k-1)\beta^i)=(i,(k)\beta^i-1),  &(i,l+1)\beta=(i,(l+1)\beta^i)=(i,(l)\beta^i+1),
\end{align*}
for all integers $k\leqslant d=\min\{d_\alpha,d_\beta\}$ and $l\geqslant u =\max\{u_\alpha,u_\beta\}$. We put
\begin{equation*}
d_0=\min\left\{(d)\alpha^1,\ldots,(d)\alpha^n,(d)\beta^1,\ldots,(d)\beta^n\right\} \quad \hbox{and}\quad u_0=\max\left\{(u)\alpha^1,\ldots,(u)\alpha^n,(u)\beta^1,\ldots,(u)\beta^n\right\}.
\end{equation*}
Let $\varepsilon_1$ be an identity map from
$L_n\times\left(\mathbb{Z}\setminus\{d_0,d_0+1,\ldots,u_0\}\right)$ onto itself. Then $\varepsilon^0=\varepsilon_1\cdot\varepsilon_0\leqslant \varepsilon_0$ and hence we have that $\alpha\cdot\varepsilon^0=\beta\cdot\varepsilon^0$. Therefore we have showed that if the elements $\alpha$ and $\beta$ of the semigroup $\mathscr{I\!O}\!_{\infty}(\mathbb{Z}^n_{\operatorname{lex}})$ are $\sigma$-equivalent, then there exist integers $d$ and $u$ such that
\begin{equation*}
(i,k)\alpha=(i,k)\beta  \qquad \mbox{ and } \qquad (i,l)\alpha=(i,l)\beta,
\end{equation*}
for all integers $k\leqslant d$ and $l\geqslant u$ and any $i=1,\ldots,n$.

Conversely, suppose that exist integers $d$ and $u$ such that
\begin{equation*}
(i,k)\alpha=(i,k)\beta  \qquad \mbox{ and } \qquad (i,l)\alpha=(i,l)\beta,
\end{equation*}
for all integers $k\leqslant d$ and $l\geqslant u$ and any $i=1,\ldots,n$. Then we have that $d\leqslant u$. If $d=u$ or $d=u-1$ then $\alpha=\beta$ in
$\mathscr{I\!O}\!_{\infty}(\mathbb{Z}^n_{\operatorname{lex}})$ and hence $\alpha$
and $\beta$ are $\sigma$-equivalent. If $d<u-1$ then we put $\varepsilon_0$ to be the identity map of the set
\begin{equation*}
\left(L_n\times\mathbb{Z}\right)\setminus \left\{\left(1,(d+1)\alpha^1\right),\ldots,\left(1,(u-1)\alpha^1\right),\ldots, \left(n,(d+1)\alpha^n\right),\ldots,\left(n,(u-1)\alpha^n\right) \right\}.
\end{equation*}
Then we get that $(i,k)(\alpha\circ\varepsilon_0)=(i,k)(\beta\circ\varepsilon_0)$
for any $(i,k)\in L_n\times\left(\mathbb{Z}\setminus\{d+1,\ldots,u-1\}\right)$ and therefore $\alpha\cdot\varepsilon_0=\beta\cdot\varepsilon_0$. Hence Lemma~III.5.2 from \cite{Petrich1984} implies that $\alpha$ and $\beta$ are $\sigma$-equivalent elements of the semigroup
$\mathscr{I\!O}\!_{\infty}(\mathbb{Z}^n_{\operatorname{lex}})$.

Now we define the map
$\textbf{H}\colon\mathscr{I\!O}\!_{\infty}(\mathbb{Z}^n_{\operatorname{lex}}) \rightarrow \left(\mathbb{Z}(+)\times\mathbb{Z}(+)\right)^n$ by the formula
\begin{equation*}
    (\alpha)\textbf{H}=\left(\left((d_\alpha)\alpha^1-d_\alpha,
    (u_\alpha)\alpha^1-u_\alpha\right),\ldots, \left((d_\alpha)\alpha^n-d_\alpha,
    (u_\alpha)\alpha^n-u_\alpha\right)\right),
\end{equation*}
where the integers $d_\alpha$ and $u_\alpha$ are defined in
Lemma~\ref{lemma-2.5}.

We observe that
\begin{equation*}
    (d_\alpha-k)\alpha^i=(d_\alpha)\alpha^i-k \qquad \hbox{ and } \qquad
    (u_\alpha+k)\alpha^i=(u_\alpha)\alpha^i+k,
\end{equation*}
for any $i=1,\ldots,n$ and any positive integer $k$. Hence we have that
\begin{equation*}
    (k)\alpha^i-k=(d_\alpha)\alpha^i-d_\alpha \qquad \hbox{ and } \qquad
    (l)\alpha^i-l=(u_\alpha)\alpha^i-u_\alpha,
\end{equation*}
for any $i=1,\ldots,n$ and all integers $k\leqslant d_\alpha$ and $l\geqslant u_\alpha$.

Lemma~\ref{lemma-2.5} implies that there exist integers $d^0$ and
$u^0$ such that
\begin{align*}
    (k-1)\alpha^i&=(k)\alpha^i-1, &(l+1)\alpha^i&=(l)\alpha^i+1,\\
    (k-1)\beta^i&=(k)\beta^i-1,   &(l+1)\beta^i&=(l)\beta^i+1,\\
    (k-1)(\alpha^i\cdot\beta^i)&=(k)(\alpha^i\cdot\beta^i)-1,
         &(l+1)(\alpha^i\cdot\beta^i)&=(l)(\alpha^i\cdot\beta^i)+1,
\end{align*}
for any $i=1,\ldots,n$ and all integers $k\leqslant d^0$ and $l\geqslant u^0$. Hence for any $i=1,\ldots,n$ and all integers $k\leqslant d^0$ and $l\geqslant u^0$ we have that
\begin{equation*}
    (k)(\alpha^i\cdot\beta^i)-k=
    (k)(\alpha^i\cdot\beta^i)-(k)\alpha^i+(k)\alpha^i-k=
    ((d_\beta)\beta^i-d_\beta)+((d_\alpha)\alpha^i-d_\alpha),
\end{equation*}
\begin{equation*}
    (l)(\alpha^i\cdot\beta^i)-l=
    (l)(\alpha^i\cdot\beta^i)-(l)\alpha^i+(l)\alpha^i-l=
    ((u_\beta)\beta^i-u_\beta)+((u_\alpha)\alpha^i-u_\alpha).
\end{equation*}
This implies that the map
$\textbf{H}\colon\mathscr{I\!O}\!_{\infty}(\mathbb{Z}^n_{\operatorname{lex}}) \rightarrow \left(\mathbb{Z}(+)\times\mathbb{Z}(+)\right)^n$ is a homomorphism. Simple verifications show that the map $\textbf{H}$ is surjective and $\ker\textbf{H}=\sigma$, i.e., the homomorphism $\textbf{H}$ generated the congruence $\sigma$ on the semigroup $\mathscr{I\!O}\!_{\infty}(\mathbb{Z}^n_{\operatorname{lex}})$.
\end{proof}

Next we establish congruences on the semigroup $\mathscr{I\!O}\!_{\infty}(\mathbb{Z}^n_{\operatorname{lex}})$.

By Proposition~\ref{proposition-2.3}$(iv)$, the semigroup $\mathscr{I\!O}\!_{\infty}(\mathbb{Z}^n_{\operatorname{lex}})$ is isomorphic to the direct power $\left(\mathscr{I\!O}\!_{\infty}(\mathbb{Z})\right)^{n}$. Hence every element $\alpha$ of $\mathscr{I\!O}\!_{\infty}(\mathbb{Z}^n_{\operatorname{lex}})$ we can present in the form $(\alpha_1,\alpha_2,\ldots,\alpha_n)$. Later by $\alpha_i^\circ$ we shall denote the element of the form $(\mathbb{I}_1,\ldots,\mathbb{I}_{i-1},\alpha_i,\mathbb{I}_{i+1},\ldots,\mathbb{I}_n)$, where $\mathbb{I}_{j}$ is the identity of $j$-th factor in $\left(\mathscr{I\!O}\!_{\infty}(\mathbb{Z})\right)^{n}$.

For $i=1,\ldots,n$ we define a relation $\sigma_{[i]}$ on $\mathscr{I\!O}\!_{\infty}(\mathbb{Z}^n_{\operatorname{lex}})$ in the following way:
\begin{equation*}
    \alpha\sigma_{[i]}\beta \quad \hbox{ if and only if there exists an idempotent } \varepsilon\in\mathscr{I\!O}\!_{\infty}(\mathbb{Z}) \hbox{ such that } \alpha\varepsilon_i^\circ=\beta\varepsilon_i^\circ.
\end{equation*}

\begin{remark}\label{remark-2.10}
For every $\alpha=(\alpha_1,\ldots,\alpha_n)\in \mathscr{I\!O}\!_{\infty}(\mathbb{Z}^n_{\operatorname{lex}})$ we have that $\alpha=\alpha_1^\circ\ldots\alpha_n^\circ$.
\end{remark}

\begin{proposition}\label{proposition-2.11}
$\sigma_{[i]}$ is a congruence on $\mathscr{I\!O}\!_{\infty}(\mathbb{Z}^n_{\operatorname{lex}})$ for every $i=1,\ldots,n$.
\end{proposition}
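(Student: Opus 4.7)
The plan is to leverage Proposition~\ref{proposition-2.3}$(iv)$ and reduce everything to componentwise statements in $\left(\mathscr{I\!O}\!_{\infty}(\mathbb{Z})\right)^{n}$, where $\sigma_{[i]}$ will turn out to be the ``product'' of the identity relation on $n-1$ factors with the least group congruence $\sigma$ on the $i$-th factor.

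First, I would unpack the definition componentwise. Write $\alpha=(\alpha_1,\ldots,\alpha_n)$ and $\beta=(\beta_1,\ldots,\beta_n)$. Since multiplication in the direct product is coordinatewise,
\begin{equation*}
    \alpha\cdot\varepsilon_i^\circ = (\alpha_1,\ldots,\alpha_{i-1},\alpha_i\varepsilon,\alpha_{i+1},\ldots,\alpha_n),
\end{equation*}
and similarly for $\beta\cdot\varepsilon_i^\circ$. Hence the equality $\alpha\varepsilon_i^\circ=\beta\varepsilon_i^\circ$ is equivalent to the conjunction of $\alpha_j=\beta_j$ for all $j\neq i$ together with $\alpha_i\varepsilon=\beta_i\varepsilon$ in $\mathscr{I\!O}\!_{\infty}(\mathbb{Z})$. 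Consequently, by the definition of the least group congruence recalled in the preliminaries,
\begin{equation*}
    \alpha\,\sigma_{[i]}\,\beta \quad\Longleftrightarrow\quad \alpha_j=\beta_j \text{ for all } j\neq i \ \text{ and }\ \alpha_i\,\sigma\,\beta_i \text{ in } \mathscr{I\!O}\!_{\infty}(\mathbb{Z}).
\end{equation*}

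Next I would verify that $\sigma_{[i]}$ is an equivalence relation. Reflexivity is immediate by taking $\varepsilon=\mathbb{I}$ in the $i$-th factor. Symmetry is built into the equivalent componentwise description. For transitivity, if $\alpha\sigma_{[i]}\beta$ via $\varepsilon$ and $\beta\sigma_{[i]}\gamma$ via $\eta$, I would use that $\varepsilon\eta$ is again an idempotent in $\mathscr{I\!O}\!_{\infty}(\mathbb{Z})$ (idempotents in inverse semigroups commute) and that the idempotent $(\varepsilon\eta)_i^\circ$ witnesses $\alpha\sigma_{[i]}\gamma$; alternatively, the componentwise description makes transitivity obvious because $\sigma$ itself is transitive on $\mathscr{I\!O}\!_{\infty}(\mathbb{Z})$.

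Finally, for compatibility with the semigroup operation, let $\alpha\sigma_{[i]}\beta$ and pick any $\chi=(\chi_1,\ldots,\chi_n)\in\mathscr{I\!O}\!_{\infty}(\mathbb{Z}^n_{\operatorname{lex}})$. Using the componentwise description, $(\alpha\chi)_j=\alpha_j\chi_j=\beta_j\chi_j=(\beta\chi)_j$ for $j\neq i$, while in the $i$-th coordinate $\alpha_i\sigma\beta_i$ combined with the fact that $\sigma$ is a congruence on $\mathscr{I\!O}\!_{\infty}(\mathbb{Z})$ gives $\alpha_i\chi_i\,\sigma\,\beta_i\chi_i$. Hence $\alpha\chi\,\sigma_{[i]}\,\beta\chi$, and symmetrically $\chi\alpha\,\sigma_{[i]}\,\chi\beta$. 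There is no genuine obstacle; the only thing that could look subtle is relating ``existence of a witnessing idempotent $\varepsilon$'' to the pointwise condition defining $\sigma$, but this is exactly the content of the preliminaries on the least group congruence and collapses once the coordinate decomposition is written out.
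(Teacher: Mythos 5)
Your proof is correct, but it takes a different route from the paper's. You use Proposition~\ref{proposition-2.3}$(iv)$ to identify $\mathscr{I\!O}\!_{\infty}(\mathbb{Z}^n_{\operatorname{lex}})$ with $\left(\mathscr{I\!O}\!_{\infty}(\mathbb{Z})\right)^n$, observe that $\alpha\varepsilon_i^\circ=\beta\varepsilon_i^\circ$ unpacks to $\alpha_j=\beta_j$ for $j\neq i$ together with $\alpha_i\varepsilon=\beta_i\varepsilon$, and hence that $\sigma_{[i]}$ is exactly the product of the identity relation on the factors $j\neq i$ with the least group congruence $\sigma$ on the $i$-th factor; everything then follows from the known fact (Lemma~III.5.2 of Petrich, already invoked in the preliminaries) that $\sigma$ is a congruence on $\mathscr{I\!O}\!_{\infty}(\mathbb{Z})$, since a product of congruences is a congruence. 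The paper instead verifies the congruence properties intrinsically: transitivity by multiplying the two witnessing idempotents and using that idempotents commute, left compatibility trivially, and right compatibility by the conjugation trick, replacing the witness $\varepsilon$ by the idempotent $\gamma_i^{-1}\varepsilon_i\gamma_i$ so that $\alpha\gamma(\gamma_i^{-1}\varepsilon_i\gamma_i)^\circ=\beta\gamma(\gamma_i^{-1}\varepsilon_i\gamma_i)^\circ$. Your reduction buys an explicit componentwise characterization of $\sigma_{[i]}$ (which the paper in effect also uses later, in the proofs of Propositions~\ref{proposition-2.12} and~\ref{proposition-2.18}), and it outsources the work to a cited general fact; the paper's computation is self-contained at this point, relying only on general inverse-semigroup identities rather than on the congruence property of $\sigma$, and it exhibits the standard ``conjugate the idempotent'' mechanism that makes such relations compatible with multiplication. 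Both arguments are complete; just make sure you state clearly that the witnessing idempotent for reflexivity may be taken to be $\mathbb{I}$ and that the existence quantifier over $\varepsilon$ is what matches the definition of $\sigma$ on the $i$-th coordinate, which you do.
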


\begin{proof}
It is obvious that $\sigma_{[i]}$ is reflexive and symmetric relation on $\mathscr{I\!O}\!_{\infty}(\mathbb{Z}^n_{\operatorname{lex}})$. Suppose that $\alpha\sigma_{[i]}\beta$ and $\beta\sigma_{[i]}\gamma$. Then there exist idempotents $\varepsilon,\iota\in\mathscr{I\!O}\!_{\infty}(\mathbb{Z})$ such that $\alpha\varepsilon_i^\circ=\beta\varepsilon_i^\circ$ and $\beta\iota_i^\circ=\gamma\iota_i^\circ$. Since in an inverse semigroup idempotents commute we get that $\alpha\varepsilon_i^\circ\iota_i^\circ= \beta\varepsilon_i^\circ\iota_i^\circ= \beta\iota_i^\circ\varepsilon_i^\circ=
\gamma\iota_i^\circ\varepsilon_i^\circ=\gamma\varepsilon_i^\circ\iota_i^\circ$, and hence $\alpha\sigma_{[i]}\gamma$.

Suppose that $\alpha\sigma_{[i]}\beta$ for some $\alpha,\beta\in \mathscr{I\!O}\!_{\infty}(\mathbb{Z}^n_{\operatorname{lex}})$ and $\gamma$ be any element of $\mathscr{I\!O}\!_{\infty}(\mathbb{Z}^n_{\operatorname{lex}})$. Then we have that $\alpha\varepsilon_i^\circ=\beta\varepsilon_i^\circ$ for some idempotent $\varepsilon\in\mathscr{I\!O}\!_{\infty}(\mathbb{Z})$. Now we get $\gamma\alpha\varepsilon_i^\circ=\gamma\beta\varepsilon_i^\circ$ and
\begin{equation*}
\begin{split}
  \alpha\gamma(\gamma_i^{-1}\varepsilon_i\gamma_i)^\circ= & \; \alpha(\gamma_i\gamma_i^{-1}\varepsilon_i)^\circ\gamma=
    \alpha(\varepsilon_i\gamma_i\gamma_i^{-1})^\circ\gamma=
    \alpha\varepsilon_i^\circ(\gamma_i\gamma_i^{-1})^\circ\gamma=
    \beta\varepsilon_i^\circ(\gamma_i\gamma_i^{-1})^\circ\gamma=
    \beta(\varepsilon_i\gamma_i\gamma_i^{-1})^\circ\gamma= \\
    = & \; \beta(\gamma_i\gamma_i^{-1}\varepsilon_i)^\circ\gamma= \beta\gamma(\gamma_i^{-1}\varepsilon_i\gamma_i)^\circ,
\end{split}
\end{equation*}
where $\gamma_i$ is the $i$-th coordinate of $\gamma$ of the representation in $\left(\mathscr{I\!O}\!_{\infty}(\mathbb{Z})\right)^{n}$. Since $\gamma_i^{-1}\varepsilon_i\gamma_i$ is an idempotent of $\mathscr{I\!O}\!_{\infty}(\mathbb{Z})$ we have that $(\gamma\alpha)\sigma_{[i]}(\gamma\beta)$ and $(\alpha\gamma)\sigma_{[i]}(\beta\gamma)$. This completes the proof of the proposition.
\end{proof}

\begin{proposition}\label{proposition-2.12}
$\sigma_{[i]}\circ\sigma_{[j]}=\sigma_{[j]}\circ\sigma_{[i]}$ and hence $\sigma_{[i]}\circ\sigma_{[j]}=\sigma_{[i]}\vee\sigma_{[j]}$ for any $i,j=1,\ldots,n$.
\end{proposition}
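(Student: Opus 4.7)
The plan is to reduce everything to the product description of $\mathscr{I\!O}_{\infty}(\mathbb{Z}^n_{\operatorname{lex}})\cong\left(\mathscr{I\!O}_{\infty}(\mathbb{Z})\right)^n$ supplied by Proposition~\ref{proposition-2.3}$(iv)$. First I would give a concrete description of $\sigma_{[i]}$: writing $\alpha=(\alpha_1,\dots,\alpha_n)$ and $\beta=(\beta_1,\dots,\beta_n)$, we have $\alpha\cdot\varepsilon_i^\circ=(\alpha_1,\dots,\alpha_{i-1},\alpha_i\varepsilon,\alpha_{i+1},\dots,\alpha_n)$, so that $\alpha\sigma_{[i]}\beta$ holds precisely when $\alpha_k=\beta_k$ for every $k\neq i$ and $\alpha_i\sigma\beta_i$ in $\mathscr{I\!O}_{\infty}(\mathbb{Z})$, where here $\sigma$ is the least group congruence on the factor. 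In other words, $\sigma_{[i]}$ is the product congruence which is the identity on every factor but the $i$-th, and which coincides with $\sigma$ on the $i$-th factor.

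Given this coordinatewise description, the proof of $\sigma_{[i]}\circ\sigma_{[j]}=\sigma_{[j]}\circ\sigma_{[i]}$ becomes an elementary check. The case $i=j$ is trivial, so assume $i\neq j$. If $\alpha(\sigma_{[i]}\circ\sigma_{[j]})\gamma$ with witness $\beta$, then unwinding the characterization gives $\alpha_k=\gamma_k$ for $k\notin\{i,j\}$, $\alpha_i\sigma\gamma_i$ (via $\beta_i$, noting that $\beta_i=\gamma_i$ since the second step only moves the $j$-th coordinate), and $\alpha_j\sigma\gamma_j$ (via $\beta_j=\alpha_j$). Conversely, given such a tuple $\gamma$, define a witness $\beta'$ by $\beta'_k=\alpha_k=\gamma_k$ for $k\notin\{i,j\}$, $\beta'_j=\gamma_j$, $\beta'_i=\alpha_i$; then $\alpha\sigma_{[j]}\beta'$ and $\beta'\sigma_{[i]}\gamma$, which shows $\alpha(\sigma_{[j]}\circ\sigma_{[i]})\gamma$. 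Hence both compositions define the same relation.

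Finally, for the equality $\sigma_{[i]}\circ\sigma_{[j]}=\sigma_{[i]}\vee\sigma_{[j]}$ I would quote the well-known fact that, for two congruences $\rho$ and $\tau$ on a semigroup, the composition $\rho\circ\tau$ is itself an equivalence relation if and only if $\rho\circ\tau=\tau\circ\rho$, and in that case $\rho\circ\tau$ coincides with the join $\rho\vee\tau$ in the lattice of congruences. Since $\sigma_{[i]}\circ\sigma_{[j]}=\sigma_{[j]}\circ\sigma_{[i]}$ has just been verified, both inclusions of the join statement are immediate: the composition contains $\sigma_{[i]}$ and $\sigma_{[j]}$ (put $\beta=\alpha$ or $\beta=\gamma$), so $\sigma_{[i]}\vee\sigma_{[j]}\subseteq\sigma_{[i]}\circ\sigma_{[j]}$, while the reverse inclusion follows from transitivity of the join.

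There is no real obstacle here once the direct product description of $\sigma_{[i]}$ is in place; the only mild pitfall is to remember that the idempotent $\varepsilon$ appearing in the definition of $\sigma_{[i]}$ lives in $\mathscr{I\!O}_{\infty}(\mathbb{Z})$ rather than in $\mathscr{I\!O}_{\infty}(\mathbb{Z}^n_{\operatorname{lex}})$, which is exactly what makes the congruence act only on the $i$-th coordinate and makes the coordinatewise commutation work.
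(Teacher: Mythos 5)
Your proposal is correct and follows essentially the same route as the paper: you unwind $\sigma_{[i]}$ coordinatewise via the direct product decomposition of Proposition~\ref{proposition-2.3}$(iv)$ (your description of $\sigma_{[i]}$ as acting by the least group congruence on the $i$-th factor and the identity elsewhere is exactly what the paper's proof extracts from the definition), then build the same intermediate witness that swaps which coordinate moves first, and invoke the standard fact (Lemma~1.4 of \cite{CP} in the paper) that commuting congruences compose to their join. No substantive difference in method.
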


\begin{proof}
Suppose that $\alpha(\sigma_{[i]}\circ\sigma_{[j]})\beta$ for some $\alpha=(\alpha_1,\ldots,\alpha_n), \beta=(\beta_1,\ldots,\beta_n)\in \mathscr{I\!O}\!_{\infty}(\mathbb{Z}^n_{\operatorname{lex}})$. Then there exist $\gamma=(\gamma_1,\ldots,\gamma_n)\in \mathscr{I\!O}\!_{\infty}(\mathbb{Z}^n_{\operatorname{lex}})$ such that $\alpha\sigma_{[i]}\gamma$ and $\gamma\sigma_{[j]}\beta$. Then the definition of $\sigma_{[i]}$ implies that the following equalities hold:
\begin{gather*}
    \alpha_k=\gamma_k, \quad \hbox{ for all }k\in\{1,\ldots,n\}\setminus\{i\};\\
    \gamma_l=\beta_l,  \quad \hbox{ for all }l\in\{1,\ldots,n\}\setminus\{j\};\\
    \alpha_i\varepsilon=\gamma_i\varepsilon,\;  \gamma_j\varepsilon=\beta_j\varepsilon\;  \hbox{ for some idempotent } \varepsilon\in \mathscr{I\!O}\!_{\infty}(\mathbb{Z}).
\end{gather*}
We put $\delta=(\delta_1,\ldots,\delta_n)$, where
\begin{equation*}
     \delta_l=
\left\{
  \begin{array}{ll}
    \beta_j, & \hbox{if~} l=j;\\
    \alpha_l, & \hbox{if~} l\neq j.
  \end{array}
\right.
\end{equation*}
Then we get that $\alpha\sigma_{[j]}\delta$ and $\delta\sigma_{[i]}\beta$, and hence $\alpha(\sigma_{[j]}\circ\sigma_{[i]})\beta$. This implies that $\sigma_{[i]}\circ\sigma_{[j]}\subseteq \sigma_{[j]}\circ\sigma_{[i]}$ and hence by Lemma~1.4 from \cite{CP} we get that $\sigma_{[i]}\circ\sigma_{[j]}=\sigma_{[i]}\vee\sigma_{[j]}$.
\end{proof}

\begin{proposition}\label{proposition-2.13}
For any collection $\{i_1,\ldots,i_k\}\subseteq\{1,\ldots,n\}$ of distinct indices, $k\le n$, the following condition holds
$\sigma_{[i_1]}\circ\ldots\circ\sigma_{[i_k]}= \sigma_{[i_1]}\vee\ldots\vee\sigma_{[i_k]}$, and hence $\sigma_{[i_1,\ldots,i_k]}= \sigma_{[i_1]}\circ\ldots\circ\sigma_{[i_k]}$ is a congruence on $\mathscr{I\!O}\!_{\infty}(\mathbb{Z}^n_{\operatorname{lex}})$.
\end{proposition}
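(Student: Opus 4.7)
My plan is induction on $k$. The base case $k=1$ is trivial, and the case $k=2$ is exactly Proposition~\ref{proposition-2.12}, so the real work is in the inductive step.

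Assuming the statement for $k-1$, I would set $\rho=\sigma_{[i_1]}\circ\cdots\circ\sigma_{[i_{k-1}]}$ and invoke the inductive hypothesis to get that $\rho=\sigma_{[i_1]}\vee\cdots\vee\sigma_{[i_{k-1}]}$ is a congruence on $\mathscr{I\!O}\!_{\infty}(\mathbb{Z}^n_{\operatorname{lex}})$. Following the pattern of the proof of Proposition~\ref{proposition-2.12}, it then suffices to verify the inclusion $\rho\circ\sigma_{[i_k]}\subseteq\sigma_{[i_k]}\circ\rho$; Lemma~1.4 from \cite{CP} will then upgrade this to the equality $\rho\circ\sigma_{[i_k]}=\sigma_{[i_k]}\circ\rho=\rho\vee\sigma_{[i_k]}$, which is precisely $\sigma_{[i_1]}\vee\cdots\vee\sigma_{[i_k]}$ and is a congruence.

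For the inclusion, take $\alpha\,(\rho\circ\sigma_{[i_k]})\,\beta$ and unpack the definition of $\rho$ to produce a chain
$$
\alpha=\delta_0\;\sigma_{[i_1]}\;\delta_1\;\sigma_{[i_2]}\;\cdots\;\sigma_{[i_{k-1}]}\;\delta_{k-1}\;\sigma_{[i_k]}\;\beta.
$$
I would then apply Proposition~\ref{proposition-2.12} to the rightmost adjacent pair $(\sigma_{[i_{k-1}]},\sigma_{[i_k]})$, producing a new intermediate element that interchanges the order of these two relations in the chain. Iterating this swap from right to left, after $k-1$ applications one arrives at a chain of the form
$$
\alpha\;\sigma_{[i_k]}\;\delta_0'\;\sigma_{[i_1]}\;\delta_1'\;\cdots\;\sigma_{[i_{k-1}]}\;\beta,
$$
which gives exactly $\alpha\,(\sigma_{[i_k]}\circ\rho)\,\beta$, as required.

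The only point to watch is the bookkeeping in the iterated swap: at each step one picks a new intermediate element via Proposition~\ref{proposition-2.12}, and one must verify that the relations on the remaining links of the chain are unaffected. Since Proposition~\ref{proposition-2.12} delivers commutation for every pair of distinct indices $i_j,i_k$, no genuine obstruction arises, and no ingredient beyond the $k=2$ case is needed to complete the induction.
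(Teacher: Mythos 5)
Your proposal is correct and follows essentially the same route as the paper: induction on $k$, with the inductive step obtained by bubbling $\sigma_{[i_k]}$ leftward through the composition via the pairwise commutation of Proposition~\ref{proposition-2.12}, and then invoking Lemma~1.4 of \cite{CP} together with the inductive hypothesis to identify the composition with the join. The only (cosmetic) difference is that you perform the swaps by chasing element chains, whereas the paper carries them out directly as equalities of composed relations using associativity of $\circ$.
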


\begin{proof}
We prove the statements of the proposition by induction. Proposition~\ref{proposition-2.12} implies that the statements hold for $k=2$. Now we suppose that the assertion holds for any integer $j<k_0\le n$ and we shall show that it is true for $k_0$. Then we have
\begin{equation*}
\begin{split}
  (\sigma_{[i_1]}\circ\ldots\circ\sigma_{[i_{k_0-1}]})\circ\sigma_{[i_{k_0}]}=
    &\;(\sigma_{[i_1]}\circ\ldots\circ\sigma_{[i_{k_0-2}]})
    \circ(\sigma_{[i_{k_0-1}]}\circ\sigma_{[i_{k_0}]})=\\
    =&\;(\sigma_{[i_1]}\circ\ldots\circ\sigma_{[i_{k_0-2}]})
    \circ(\sigma_{[i_{k_0}]}\circ\sigma_{[i_{k_0-1}]})=\\
    =&\;\sigma_{[i_1]}\circ\ldots\circ(\sigma_{[i_{k_0}]}
    \circ\sigma_{[i_{k_0-2}]})\circ\sigma_{[i_{k_0-1}]}=\\
    =&\;(\sigma_{[i_1]}\circ\ldots\circ\sigma_{[i_{k_0}]})
    \circ(\sigma_{[i_{k_0-2}]}\circ\sigma_{[i_{k_0-1}]})=\\
    =&\;\ldots=\\
    =&\;\sigma_{[i_{k_0}]}\circ(\sigma_{[i_1]}\circ \ldots\sigma_{[i_{k_0-2}]}\circ\sigma_{[i_{k_0-1}]}).
\end{split}
\end{equation*}
This implies the following
\begin{equation*}
    \sigma_{[i_1]}\circ\ldots\circ\sigma_{[i_{k_0-1}]}\circ\sigma_{[i_{k_0}]}=
    (\sigma_{[i_1]}\circ\ldots\circ\sigma_{[i_{k_0-1}]})\circ\sigma_{[i_{k_0}]}=
    (\sigma_{[i_1]}\circ\ldots\circ\sigma_{[i_{k_0-1}]})\vee\sigma_{[i_{k_0}]}=
    (\sigma_{[i_1]}\vee\ldots\vee\sigma_{[i_{k_0-1}]})\vee\sigma_{[i_{k_0}]},
\end{equation*}
and similar arguments as in the proof of Proposition~\ref{proposition-2.12} imply that $\sigma_{[i_1,\ldots,i_k]}= \sigma_{[i_1]}\circ\ldots\circ\sigma_{[i_k]}$ is a congruence on $\mathscr{I\!O}\!_{\infty}(\mathbb{Z}^n_{\operatorname{lex}})$.
\end{proof}

Proposition~\ref{proposition-2.13} implies the following

\begin{corollary}\label{corollary-2.14}
For any collections $\{i_1,\ldots,i_k\}\subseteq\{1,\ldots,n\}$ and $\{j_1,\ldots,j_l\}\subseteq\{1,\ldots,n\}$ of indices, $k\le n$, the following condition holds:
\begin{itemize}
  \item[$(i)$] $\sigma_{[i_1,\ldots,i_k]}\subseteq \sigma_{[j_1,\ldots,j_l]}$ if and only if $\{i_1,\ldots,i_k\}\subseteq\{j_1,\ldots,j_l\}$;
  \item[$(ii)$] $\sigma_{[i_1,\ldots,i_k]}=\sigma_{[j_1,\ldots,j_l]}$ if and only if $\{i_1,\ldots,i_k\}=\{j_1,\ldots,j_l\}$;
  \item[$(iii)$] $\sigma_{[i_1,\ldots,i_k]}\circ\sigma_{[j_1,\ldots,j_l]}= \sigma_{[p_1,\ldots,p_m]}$, where $\{p_1,\ldots,p_m\}= \{i_1,\ldots,i_k\}\cup\{j_1,\ldots,j_l\}$.
\end{itemize}
\end{corollary}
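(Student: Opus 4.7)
My plan is to work through the product decomposition established in Proposition~\ref{proposition-2.3}$(iv)$, viewing each $\alpha\in\mathscr{I\!O}\!_{\infty}(\mathbb{Z}^n_{\operatorname{lex}})$ as a tuple $(\alpha_1,\ldots,\alpha_n)$, and obtain an explicit coordinatewise description of each congruence $\sigma_{[i_1,\ldots,i_k]}$. Once that description is in hand, the three assertions of the corollary reduce to elementary set-theoretic observations.

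As a first step, I would unpack the definition of $\sigma_{[i]}$. Because $\varepsilon_i^\circ=(\mathbb{I}_1,\ldots,\mathbb{I}_{i-1},\varepsilon,\mathbb{I}_{i+1},\ldots,\mathbb{I}_n)$ acts as the identity on all coordinates other than the $i$-th, the equality $\alpha\varepsilon_i^\circ=\beta\varepsilon_i^\circ$ holds if and only if $\alpha_j=\beta_j$ for every $j\neq i$ and $\alpha_i\varepsilon=\beta_i\varepsilon$ in $\mathscr{I\!O}\!_{\infty}(\mathbb{Z})$. Invoking the characterization of the minimum group congruence on $\mathscr{I\!O}\!_{\infty}(\mathbb{Z})$ (the case $n=1$ of Theorem~\ref{theorem-2.9}), this means exactly that $\alpha_j=\beta_j$ for all $j\neq i$ and $\alpha_i\,\sigma\,\beta_i$ in the $i$-th factor. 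A straightforward induction on $k$, together with Proposition~\ref{proposition-2.13}, then yields
\begin{equation*}
\alpha\,\sigma_{[i_1,\ldots,i_k]}\,\beta \;\Longleftrightarrow\; \alpha_j=\beta_j \text{ for all } j\notin\{i_1,\ldots,i_k\} \text{ and } \alpha_{i_p}\,\sigma\,\beta_{i_p} \text{ for each } p=1,\ldots,k.
\end{equation*}

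With this coordinate description, the ``$\Leftarrow$'' direction of $(i)$ is immediate: a larger index set imposes strictly fewer equality constraints on the coordinates and replaces them by the weaker demand of $\sigma$-equivalence, so any pair satisfying the conditions for $\sigma_{[i_1,\ldots,i_k]}$ satisfies those for $\sigma_{[j_1,\ldots,j_l]}$. For ``$\Rightarrow$'', I argue by contraposition. If some $i_p\in\{i_1,\ldots,i_k\}\setminus\{j_1,\ldots,j_l\}$ exists, I take $\alpha=\mathbb{I}$ and let $\beta$ agree with $\alpha$ on every coordinate except the $i_p$-th, where I put a nontrivial unit shift of $\mathbb{Z}$; this shift lies in the group of units and hence in the $\sigma$-class of the identity in the $i_p$-th factor. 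Then $\alpha\,\sigma_{[i_1,\ldots,i_k]}\,\beta$ holds, but the pair violates the equality constraint at the coordinate $i_p$ needed for $\sigma_{[j_1,\ldots,j_l]}$.

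Assertion $(ii)$ is then antisymmetry of set inclusion applied to $(i)$. Assertion $(iii)$ reads off the coordinate description together with Proposition~\ref{proposition-2.13}: composing $\sigma_{[i_1,\ldots,i_k]}$ with $\sigma_{[j_1,\ldots,j_l]}$ permits $\sigma$-movement on each coordinate in $\{i_1,\ldots,i_k\}\cup\{j_1,\ldots,j_l\}$ while forcing equality elsewhere, which is precisely $\sigma_{[p_1,\ldots,p_m]}$ for $\{p_1,\ldots,p_m\}=\{i_1,\ldots,i_k\}\cup\{j_1,\ldots,j_l\}$. I do not anticipate any substantive obstacle; the only point that requires any real verification is the coordinate description of $\sigma_{[i]}$, and this is the same routine calculation already exploited in the proof of Proposition~\ref{proposition-2.11}.
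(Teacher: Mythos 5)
Your overall strategy is sound and, in substance, it is the intended one: the paper offers no argument beyond ``Proposition~\ref{proposition-2.13} implies the following'', and your coordinatewise description of $\sigma_{[i]}$ (namely $\alpha\sigma_{[i]}\beta$ iff $\alpha_j=\beta_j$ for $j\neq i$ and $\alpha_i\sigma\beta_i$ in the $i$-th factor), extended to $\sigma_{[i_1,\ldots,i_k]}$ via Proposition~\ref{proposition-2.13}, is exactly the content of Propositions~\ref{proposition-2.15} and~\ref{proposition-2.18}; from it, $(ii)$ and $(iii)$ follow as you say.

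There is, however, one genuine error, in the ``$\Rightarrow$'' direction of $(i)$: your witness pair does not work. You take $\beta$ to differ from $\mathbb{I}$ only in coordinate $i_p$, where you place a nontrivial unit shift $\varsigma_1$, claiming that a unit ``lies in the $\sigma$-class of the identity''. This is false in $\mathscr{I\!O}\!_{\infty}(\mathbb{Z})$: if $\varsigma_1\sigma\mathbb{I}$ there would be an idempotent $\varepsilon$ with $\varsigma_1\varepsilon=\varepsilon$, which is impossible since $\varsigma_1\varepsilon$ moves every point of its domain; equivalently, under the homomorphism $\textbf{H}$ of Theorem~\ref{theorem-2.9} (case $n=1$) the shift $\varsigma_1$ maps to $(1,1)\neq(0,0)$, so the group of units embeds into the quotient rather than collapsing to the identity class. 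Consequently your pair $(\alpha,\beta)$ is not $\sigma_{[i_1,\ldots,i_k]}$-related at all, so it witnesses nothing. The repair is easy: in coordinate $i_p$ put instead any idempotent $\varepsilon\neq\mathbb{I}$ of $\mathscr{I\!O}\!_{\infty}(\mathbb{Z})$ (say the identity map of $\mathbb{Z}\setminus\{0\}$). Every idempotent is $\sigma$-equivalent to $\mathbb{I}$, so $\alpha\sigma_{[i_p]}\beta\subseteq\sigma_{[i_1,\ldots,i_k]}$, while the equality required at coordinate $i_p$ for $\sigma_{[j_1,\ldots,j_l]}$ fails, giving the desired strictness. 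With this substitution (and the same fix wherever you rely on units being $\sigma$-trivial), the proof is complete and aligns with the paper's route through Proposition~\ref{proposition-2.13}.
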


\begin{proposition}\label{proposition-2.15}
For any collection $\{i_1,\ldots,i_k\}\subseteq\{1,\ldots,n\}$ of distinct indices, $k\le n$, $\alpha\sigma_{[i_1,\ldots,i_k]}\beta$ in $\mathscr{I\!O}\!_{\infty}(\mathbb{Z}^n_{\operatorname{lex}})$ if and only if $\alpha\varepsilon_{i_1}^\circ\ldots\varepsilon_{i_k}^\circ= \beta\varepsilon_{i_1}^\circ\ldots\varepsilon_{i_k}^\circ$ for some idempotents $\varepsilon_{i_1}^\circ,\ldots,\varepsilon_{i_k}^\circ\in \mathscr{I\!O}\!_{\infty}(\mathbb{Z}^n_{\operatorname{lex}})$.
\end{proposition}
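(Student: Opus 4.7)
The plan is to prove both directions by reducing to the single-coordinate case (the definition of $\sigma_{[i]}$) combined with the fact that idempotents in an inverse semigroup commute. Since by Proposition~\ref{proposition-2.13} we have $\sigma_{[i_1,\ldots,i_k]}= \sigma_{[i_1]}\circ\ldots\circ\sigma_{[i_k]}$, an equivalent way to phrase the statement is: $\alpha\sigma_{[i_1,\ldots,i_k]}\beta$ if and only if there is a chain $\alpha=\gamma_0, \gamma_1, \ldots, \gamma_k=\beta$ with $\gamma_{j-1}\sigma_{[i_j]}\gamma_j$ for each $j$, and this needs to be matched with a single simultaneous multiplication by idempotents on the respective coordinates.

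For the forward direction, suppose $\alpha\sigma_{[i_1,\ldots,i_k]}\beta$. Then by the composition description there exist $\gamma_0=\alpha,\gamma_1,\ldots,\gamma_k=\beta$ and idempotents $\varepsilon_{i_j}\in\mathscr{I\!O}\!_{\infty}(\mathbb{Z})$ such that $\gamma_{j-1}\varepsilon_{i_j}^\circ=\gamma_j\varepsilon_{i_j}^\circ$ for $j=1,\ldots,k$. I multiply the first relation on the right by $\varepsilon_{i_2}^\circ\cdots\varepsilon_{i_k}^\circ$ and then use that the $\varepsilon_{i_j}^\circ$ are pairwise commuting idempotents of the inverse semigroup $\mathscr{I\!O}\!_{\infty}(\mathbb{Z}^n_{\operatorname{lex}})$ to bring $\varepsilon_{i_2}^\circ$ next to $\gamma_1$; this yields $\alpha\varepsilon_{i_1}^\circ\cdots\varepsilon_{i_k}^\circ=\gamma_1\varepsilon_{i_1}^\circ\cdots\varepsilon_{i_k}^\circ$, after which I apply the relation $\gamma_1\varepsilon_{i_2}^\circ=\gamma_2\varepsilon_{i_2}^\circ$. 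Iterating, a straightforward induction on $k$ produces $\alpha\varepsilon_{i_1}^\circ\cdots\varepsilon_{i_k}^\circ=\beta\varepsilon_{i_1}^\circ\cdots\varepsilon_{i_k}^\circ$ as required.

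For the converse, suppose $\alpha\varepsilon_{i_1}^\circ\cdots\varepsilon_{i_k}^\circ=\beta\varepsilon_{i_1}^\circ\cdots\varepsilon_{i_k}^\circ$ for some idempotents $\varepsilon_{i_j}^\circ$. I construct intermediate elements by setting $\gamma_j=\alpha\varepsilon_{i_1}^\circ\cdots\varepsilon_{i_j}^\circ$ for $j=0,\ldots,k$ (so $\gamma_0=\alpha$). Because $\varepsilon_{i_j}^\circ$ is an idempotent, $\gamma_{j-1}\varepsilon_{i_j}^\circ=\gamma_j=\gamma_j\varepsilon_{i_j}^\circ$, which gives $\gamma_{j-1}\sigma_{[i_j]}\gamma_j$ directly from the definition of $\sigma_{[i_j]}$. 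Consequently $\alpha\sigma_{[i_1,\ldots,i_k]}\gamma_k$. Doing the same construction starting from $\beta$ gives $\beta\sigma_{[i_1,\ldots,i_k]}\delta_k$ with $\delta_k=\beta\varepsilon_{i_1}^\circ\cdots\varepsilon_{i_k}^\circ$. The hypothesis says $\gamma_k=\delta_k$, so the transitivity of the congruence $\sigma_{[i_1,\ldots,i_k]}$ (which is an equivalence relation by Proposition~\ref{proposition-2.13}) yields $\alpha\sigma_{[i_1,\ldots,i_k]}\beta$.

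The main obstacle I anticipate is only bookkeeping: in the forward direction one must argue cleanly that the idempotents $\varepsilon_{i_j}^\circ$ can be passed past the elements $\gamma_j$ in the right order so that the products collapse simultaneously, and this rests entirely on commutativity of idempotents and associativity, so no new semigroup-theoretic input is needed beyond Proposition~\ref{proposition-2.13} and the definition of $\sigma_{[i]}$.
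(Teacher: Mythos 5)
Your proposal is correct and follows essentially the same route as the paper: the forward direction collapses the chain of $\sigma_{[i_j]}$-witnessing idempotents by commutativity of idempotents, exactly as in the paper's proof. In the converse, the paper splices your two $k$-step chains (from $\alpha$ and from $\beta$ down to the common element $\alpha\varepsilon_{i_1}^\circ\cdots\varepsilon_{i_k}^\circ=\beta\varepsilon_{i_1}^\circ\cdots\varepsilon_{i_k}^\circ$) into a single $2k$-step chain and collapses the resulting composite via Proposition~\ref{proposition-2.13}, whereas you close the argument with symmetry and transitivity of the congruence $\sigma_{[i_1,\ldots,i_k]}$ --- a cosmetic difference only.
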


\begin{proof}
$(\Rightarrow)$ Suppose that $\alpha\sigma_{[i_1,\ldots,i_k]}\beta$ in $\mathscr{I\!O}\!_{\infty}(\mathbb{Z}^n_{\operatorname{lex}})$. Without loss of generality we can assume that $i_1=1,\ldots,i_k=k$. Then there exist $\gamma^1,\ldots,\gamma^{k-1}$ such that $\alpha\sigma_{[1]}\gamma^1\sigma_{[2]}\gamma^2\sigma_{[3]}\ldots \sigma_{[k-1]}\gamma^{k-1}\sigma_{[k]}\beta$. This implies the existence of idempotents $\varepsilon_1^\circ,\ldots,\varepsilon_k^\circ\in \mathscr{I\!O}\!_{\infty}(\mathbb{Z}^n_{\operatorname{lex}})$ such that $\alpha\varepsilon_1^\circ=\gamma^1\varepsilon_1^\circ$, $\gamma^1\varepsilon_2^\circ=\gamma^2\varepsilon_2^\circ$, $\ldots$, $\gamma^{k-1}\varepsilon_k^\circ=\beta\varepsilon_k^\circ$. Since idempotents in an inverse semigroup commute we have that
\begin{equation*}
\begin{split}
  \alpha\varepsilon_{1}^\circ\varepsilon_{2}^\circ\ldots\varepsilon_{k}^\circ= &
  \;\gamma^1\varepsilon_{1}^\circ\varepsilon_{2}^\circ\ldots\varepsilon_{k}^\circ=
  \;\gamma^1\varepsilon_{2}^\circ\varepsilon_{1}^\circ\ldots\varepsilon_{k}^\circ=
  \;\gamma^2\varepsilon_{2}^\circ\varepsilon_{1}^\circ\ldots\varepsilon_{k}^\circ=
  \;\gamma^2\varepsilon_{3}^\circ\varepsilon_{1}^\circ\ldots\varepsilon_{k}^\circ=
  \;\gamma^3\varepsilon_{3}^\circ\varepsilon_{1}^\circ\varepsilon_{2}^\circ \ldots\varepsilon_{k}^\circ= \dots=  \\
  =&\;\gamma^{k-1}\varepsilon_{k-1}^\circ\varepsilon_{2}^\circ\varepsilon_{1}^\circ \ldots\varepsilon_{k}^\circ=
  \gamma^{k-1}\varepsilon_{k}^\circ\varepsilon_{1}^\circ\varepsilon_{2}^\circ \ldots\varepsilon_{k-1}^\circ=
  \beta\varepsilon_{k}^\circ\varepsilon_{1}^\circ\varepsilon_{2}^\circ \ldots\varepsilon_{k-1}^\circ=
  \beta\varepsilon_{1}^\circ\varepsilon_{2}^\circ \ldots\varepsilon_{k}^\circ
\end{split}
\end{equation*}

$(\Leftarrow)$ Suppose that $\alpha\varepsilon_{i_1}^\circ\ldots\varepsilon_{i_k}^\circ= \beta\varepsilon_{i_1}^\circ\ldots\varepsilon_{i_k}^\circ$ for some idempotents $\varepsilon_{i_1}^\circ,\ldots,\varepsilon_{i_k}^\circ\in \mathscr{I\!O}\!_{\infty}(\mathbb{Z}^n_{\operatorname{lex}})$. Without loss of generality we can assume that $i_1=1,\ldots,i_k=k$. We put $\gamma^1=\alpha\varepsilon_{1}^\circ$, $\gamma^2=\alpha\varepsilon_{1}^\circ\varepsilon_{2}^\circ$, $\ldots$, $\gamma^k=\alpha\varepsilon_{1}^\circ\varepsilon_{2}^\circ\ldots\varepsilon_{k}^\circ=
\beta\varepsilon_{1}^\circ\varepsilon_{2}^\circ\ldots\varepsilon_{k}^\circ$, $\ldots$, $\gamma^{2k-1}=\beta\varepsilon_{1}^\circ\varepsilon_{2}^\circ$, $\gamma^{2k}=\beta\varepsilon_{1}^\circ$. Therefore we get that
\begin{equation*}
\alpha\sigma_{[1]}\gamma^1\sigma_{[2]}\gamma^2\sigma_{[2]}\ldots\sigma_{[k]}\gamma^k \sigma_{[k+1]}\gamma^{k+1}\sigma_{[k-2]}\ldots\sigma_{[2]}\gamma^{2k-1} \sigma_{[1]}\beta.
\end{equation*}
This implies that $\alpha(\sigma_{[1]}\circ\sigma_{[2]}\circ\ldots \sigma_{[k]}\circ\ldots\sigma_{[2]}\circ\sigma_{[1]})\beta$, end by Proposition~\ref{proposition-2.13} we have that $\alpha\sigma_{[i_1,\ldots,i_k]}\beta$.
\end{proof}

\begin{proposition}\label{proposition-2.16}
$\sigma_{[1,2,\ldots,n]}$ is the least group congruence on $\mathscr{I\!O}\!_{\infty}(\mathbb{Z}^n_{\operatorname{lex}})$, i.e., $\sigma_{[1,2,\ldots,n]}=\sigma$.
\end{proposition}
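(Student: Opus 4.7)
The plan is to show that the congruence $\sigma_{[1,2,\ldots,n]}$ coincides with Munn's minimum group congruence $\sigma$ by matching their set-theoretic descriptions. Recall from the definition that $\alpha\sigma\beta$ in the inverse semigroup $\mathscr{I\!O}\!_{\infty}(\mathbb{Z}^n_{\operatorname{lex}})$ iff $\alpha\varepsilon=\beta\varepsilon$ for some $\varepsilon\in E(\mathscr{I\!O}\!_{\infty}(\mathbb{Z}^n_{\operatorname{lex}}))$, so I need only identify the idempotents used in the characterization of $\sigma_{[1,\ldots,n]}$ with the full idempotent semilattice.

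First I would invoke Proposition~\ref{proposition-2.15} to rewrite $\alpha\sigma_{[1,\ldots,n]}\beta$ as: there exist idempotents $\varepsilon_1^\circ,\ldots,\varepsilon_n^\circ\in\mathscr{I\!O}\!_{\infty}(\mathbb{Z}^n_{\operatorname{lex}})$ (each $\varepsilon_i^\circ$ coming from the $i$-th factor $S_i$ as in the notation introduced before Proposition~\ref{proposition-2.11}) such that
\begin{equation*}
\alpha\varepsilon_1^\circ\varepsilon_2^\circ\cdots\varepsilon_n^\circ= \beta\varepsilon_1^\circ\varepsilon_2^\circ\cdots\varepsilon_n^\circ.
\end{equation*}
The key observation, supplied by Proposition~\ref{proposition-2.3}$(iv)$ together with Remark~\ref{remark-2.10}, is that every idempotent $\varepsilon=(\varepsilon_1,\ldots,\varepsilon_n)$ of $\mathscr{I\!O}\!_{\infty}(\mathbb{Z}^n_{\operatorname{lex}})\cong\bigl(\mathscr{I\!O}\!_{\infty}(\mathbb{Z})\bigr)^n$ factors as $\varepsilon=\varepsilon_1^\circ\varepsilon_2^\circ\cdots\varepsilon_n^\circ$, where each $\varepsilon_i^\circ$ is an idempotent in $S_i$, and conversely any such product $\varepsilon_1^\circ\cdots\varepsilon_n^\circ$ is an idempotent of $\mathscr{I\!O}\!_{\infty}(\mathbb{Z}^n_{\operatorname{lex}})$.

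Combining these two facts gives the equivalence
\begin{equation*}
\alpha\sigma_{[1,\ldots,n]}\beta \quad\Longleftrightarrow\quad (\exists\,\varepsilon\in E(\mathscr{I\!O}\!_{\infty}(\mathbb{Z}^n_{\operatorname{lex}})))\; \alpha\varepsilon=\beta\varepsilon \quad\Longleftrightarrow\quad \alpha\sigma\beta,
\end{equation*}
which is the desired equality $\sigma_{[1,2,\ldots,n]}=\sigma$. Since $\sigma$ is known (Lemma~III.5.2 of \cite{Petrich1984}) to be the least group congruence on any inverse semigroup, nothing more has to be checked. There is no genuine obstacle in this proof: all technical content was already packaged into Propositions~\ref{proposition-2.3}, \ref{proposition-2.13} and~\ref{proposition-2.15}, and the computation in Theorem~\ref{theorem-2.9} already shows explicitly that the quotient is the group $(\mathbb{Z}(+))^{2n}$, which is consistent with $\sigma_{[1,\ldots,n]}$ being a group congruence.
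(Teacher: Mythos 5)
Your argument is correct, but it is not the route the paper takes, so a short comparison is in order. The paper proves the two inclusions separately: for $\sigma_{[1]}\circ\cdots\circ\sigma_{[n]}\subseteq\sigma$ it observes that each $\sigma_{[i]}\subseteq\sigma$ by definition and then uses Proposition~\ref{proposition-2.13} together with the lattice of congruences (the composition is the join, hence lies below $\sigma$); for $\sigma\subseteq\sigma_{[1]}\circ\cdots\circ\sigma_{[n]}$ it takes an idempotent $\varepsilon=(\varepsilon_1,\ldots,\varepsilon_n)$ witnessing $\alpha\sigma\beta$ and explicitly builds the chain $\gamma^1=(\beta_1,\alpha_2,\ldots,\alpha_n)$, $\gamma^2=(\beta_1,\beta_2,\ldots,\alpha_n)$, \ldots, checking $\alpha\sigma_{[1]}\gamma^1\sigma_{[2]}\cdots\sigma_{[n]}\beta$ coordinate by coordinate; notably it never invokes Proposition~\ref{proposition-2.15}. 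You instead apply Proposition~\ref{proposition-2.15} with the full index set $\{1,\ldots,n\}$ and combine it with the factorization of Remark~\ref{remark-2.10} (in the identification of Proposition~\ref{proposition-2.3}$(iv)$), under which the products $\varepsilon_1^\circ\varepsilon_2^\circ\cdots\varepsilon_n^\circ$ range over exactly the idempotents $(\varepsilon_1,\ldots,\varepsilon_n)$ of $\bigl(\mathscr{I\!O}\!_{\infty}(\mathbb{Z})\bigr)^n$; this turns the characterization of $\sigma_{[1,\ldots,n]}$ verbatim into the paper's defining description of $\sigma$, giving both inclusions in a single equivalence. What each approach buys: yours is shorter and avoids redoing any witness construction, but it leans entirely on Proposition~\ref{proposition-2.15}, whose proof already contains the chain-of-intermediate-elements work; the paper's proof is self-contained modulo Proposition~\ref{proposition-2.13} and makes that chain explicit in the special case needed. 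Your closing remark about Theorem~\ref{theorem-2.9} is only a consistency check and carries no logical weight, which is fine since the identity $\sigma_{[1,\ldots,n]}=\sigma$ is already fully established by the equivalence you state.
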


\begin{proof}
For every $i=1,\ldots,n$ the definition of $\sigma_{[i]}$ implies that $\sigma_{[i]}\subseteq \sigma$. Then by proposition~\ref{proposition-2.13} we have that $\sigma_{[1]}\circ\ldots\circ\sigma_{[n]}= \sigma_{[1]}\vee\ldots\vee\sigma_{[n]}$, and since the congruences form a lattice we conclude that $\sigma_{[1]}\circ\ldots\circ\sigma_{[n]}\subseteq\sigma$.

Let $\alpha=(\alpha_1,\alpha_2,\ldots,\alpha_n)$ and $\beta=(\beta_1,\beta_2,\ldots,\beta_n)$ be elements of the semigroup $\mathscr{I\!O}\!_{\infty}(\mathbb{Z}^n_{\operatorname{lex}})$ such that $\alpha\sigma\beta$. Then there exists an idempotent $\varepsilon=(\varepsilon_1,\varepsilon_2,\ldots,\varepsilon_n)$ such that $\alpha\varepsilon=\beta\varepsilon$, i.e.,
\begin{equation*}
(\alpha_1\varepsilon_1,\alpha_2\varepsilon_2,\ldots,\alpha_n\varepsilon_n)= (\beta_1\varepsilon_1,\beta_2\varepsilon_2,\ldots,\beta_n\varepsilon_n).
\end{equation*}
Now we put $\gamma^1=(\beta_1,\alpha_2,\ldots,\alpha_{n-1},\alpha_n)$, $\gamma^2=(\beta_1,\beta_2,\ldots,\alpha_{n-1},\alpha_n)$, $\ldots$, $\gamma^{n-1}=(\beta_1,\beta_2,\ldots,\beta_{n-1},\alpha_n)$. Then we have that $\alpha\sigma_{[1]}\gamma^1$, $\gamma^1\sigma_{[2]}\gamma^2$, $\ldots$, $\gamma^{n-1}\sigma_{[2]}\beta$. Therefore we get that $\sigma\subseteq \sigma_{[1]}\circ\ldots\circ\sigma_{[n]}$, and hence $\sigma= \sigma_{[1]}\circ\ldots\circ\sigma_{[n]}$.
\end{proof}

For every $i=1,\ldots,n$ we define a map $\pi^i\colon \mathscr{I\!O}\!_{\infty}(\mathbb{Z}^n_{\operatorname{lex}})\rightarrow \mathscr{I\!O}\!_{\infty}(\mathbb{Z}^n_{\operatorname{lex}})$ by the formula $(\alpha)\pi_i=\alpha_i^\circ$, i.e., $(\alpha_1,\ldots,\alpha_i,\ldots,\alpha_n)\pi^i= (\mathbb{I}_1,\ldots,\mathbb{I}_{i-1},\alpha_i,\mathbb{I}_{i+1},\ldots,\mathbb{I}_n)$. Simple verifications show that such defined map $\pi^i\colon \mathscr{I\!O}\!_{\infty}(\mathbb{Z}^n_{\operatorname{lex}})\rightarrow \mathscr{I\!O}\!_{\infty}(\mathbb{Z}^n_{\operatorname{lex}})$ is a homomorphism. Let ${\pi^i}^\sharp$ be the congruence on $\mathscr{I\!O}\!_{\infty}(\mathbb{Z}^n_{\operatorname{lex}})$ which is generated by the homomorphism $\pi^i$.

Let $S$ be an inverse semigroup. For any congruence $\rho$ on $S$ we define a congruence $\rho_{\min}$ on $S$ as follows:
\begin{equation*}
    a\rho_{\min}b \qquad \hbox{if and only if} \qquad ae=be \quad \hbox{for some} \quad e\in E(S), \quad e\rho a^{-1}a\rho b^{-1}b,
\end{equation*}
(see: \cite[Section~III.2]{Petrich1984}).

\begin{proposition}\label{proposition-2.17}
${\pi^i}^\sharp_{\min}= \sigma_{[1]}\circ\ldots\circ\sigma_{[i-1]}\circ\sigma_{[i+1]}\circ\ldots \circ\sigma_{[n]}$ for every $i=1,\ldots,n$.
\end{proposition}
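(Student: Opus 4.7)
The plan is to verify the identity by describing both congruences in coordinates under the isomorphism $\mathscr{I\!O}\!_{\infty}(\mathbb{Z}^n_{\operatorname{lex}})\cong\prod_{j=1}^{n}\mathscr{I\!O}\!_{\infty}(\mathbb{Z})$ supplied by Proposition~\ref{proposition-2.3}$(iv)$, and then checking that the resulting coordinate conditions coincide.

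First I would unfold the right-hand side. By Proposition~\ref{proposition-2.13} the composition $\sigma_{[1]}\circ\ldots\circ\sigma_{[i-1]}\circ\sigma_{[i+1]}\circ\ldots\circ\sigma_{[n]}$ is exactly $\sigma_{[1,\ldots,i-1,i+1,\ldots,n]}$, and Proposition~\ref{proposition-2.15} then characterizes this relation: $\alpha\sigma_{[1,\ldots,i-1,i+1,\ldots,n]}\beta$ iff there exist idempotents $\varepsilon_{j}^\circ$ for $j\in\{1,\ldots,n\}\setminus\{i\}$ with $\alpha\prod_{j\neq i}\varepsilon_{j}^\circ=\beta\prod_{j\neq i}\varepsilon_{j}^\circ$. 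Writing $\alpha=(\alpha_1,\ldots,\alpha_n)$ and $\beta=(\beta_1,\ldots,\beta_n)$ in the direct product, the product $\prod_{j\neq i}\varepsilon_{j}^\circ$ is the tuple $(\varepsilon_1,\ldots,\varepsilon_{i-1},\mathbb{I}_i,\varepsilon_{i+1},\ldots,\varepsilon_n)$, so this condition rewrites as $\alpha_i=\beta_i$ together with, for each $j\neq i$, an equation $\alpha_j\varepsilon_j=\beta_j\varepsilon_j$ for some idempotent $\varepsilon_j\in\mathscr{I\!O}\!_{\infty}(\mathbb{Z})$.

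Next I would unfold the left-hand side via the Petrich formula recalled just before the statement. Since $\pi^i$ sends $\alpha$ to $(\mathbb{I}_1,\ldots,\mathbb{I}_{i-1},\alpha_i,\mathbb{I}_{i+1},\ldots,\mathbb{I}_n)$, its kernel congruence satisfies $\chi{\pi^i}^\sharp\tau\Leftrightarrow\chi_i=\tau_i$. Hence $\alpha{\pi^i}^\sharp_{\min}\beta$ requires an idempotent $e=(e_1,\ldots,e_n)$ with $e_i=(\alpha^{-1}\alpha)_i=\alpha_i^{-1}\alpha_i$ and $e_i=(\beta^{-1}\beta)_i=\beta_i^{-1}\beta_i$, together with $\alpha e=\beta e$. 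The $i$-th coordinate of this latter equality collapses to $\alpha_i\cdot\alpha_i^{-1}\alpha_i=\beta_i\cdot\beta_i^{-1}\beta_i$, which (using $\alpha_i^{-1}\alpha_i=\beta_i^{-1}\beta_i$) forces $\alpha_i=\beta_i$. For $j\neq i$ the $j$-th coordinate gives $\alpha_j e_j=\beta_j e_j$, where $e_j$ may be any idempotent of $\mathscr{I\!O}\!_{\infty}(\mathbb{Z})$ since idempotents of the direct product are precisely tuples of idempotents chosen independently.

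Finally the two descriptions agree verbatim: each asserts that $\alpha_i=\beta_i$ and that, for every $j\neq i$, there exists an idempotent of $\mathscr{I\!O}\!_{\infty}(\mathbb{Z})$ equalizing $\alpha_j$ and $\beta_j$ under right multiplication. This proves the claimed equality. The only real bookkeeping hurdle is the asymmetric treatment of the $i$-th coordinate in the two formulas—Proposition~\ref{proposition-2.15} puts $\mathbb{I}_i$ there, whereas the $\rho_{\min}$-formula puts $\alpha_i^{-1}\alpha_i$—but in both cases the net conclusion at coordinate $i$ is the sharp equality $\alpha_i=\beta_i$, so the two coordinate conditions match.
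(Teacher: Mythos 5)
Your proposal is correct, and it follows essentially the same route as the paper: unfold the right-hand side via Propositions~\ref{proposition-2.13} and~\ref{proposition-2.15}, unfold ${\pi^i}^\sharp_{\min}$ from the definition of $\rho_{\min}$ (noting $\chi\,{\pi^i}^\sharp\,\tau$ iff $\chi_i=\tau_i$), extract $\alpha_i=\beta_i$ at the $i$-th coordinate, and pass between the witnessing idempotents by swapping $\mathbb{I}_i$ and $\alpha_i^{-1}\alpha_i$ in that coordinate. The paper phrases this as two inclusions rather than a single coordinatewise characterization, but the content is the same.
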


\begin{proof}
$(\Leftarrow)$ Suppose that $\alpha(\sigma_{[1]}\circ\ldots \circ\sigma_{[i-1]}\circ\sigma_{[i+1]}\circ \ldots\circ\sigma_{[n]})\beta$ in $\mathscr{I\!O}\!_{\infty}(\mathbb{Z}^n_{\operatorname{lex}})$ for some $\alpha=(\alpha_1,\ldots,\alpha_n)$ and $\beta=(\beta_1,\ldots,\beta_n)$. Then by Proposition~\ref{proposition-2.15} we have that $\alpha\varepsilon_{1}^\circ\ldots\varepsilon_{i-1}^\circ \varepsilon_{i+1}^\circ\ldots\varepsilon_{n}^\circ= \beta\varepsilon_{1}^\circ\ldots\varepsilon_{i-1}^\circ \varepsilon_{i+1}^\circ\ldots\varepsilon_{n}^\circ$ for some idempotent $\varepsilon=(\varepsilon_{1},\ldots,\varepsilon_{i-1},\mathbb{I}_i, \varepsilon_{i+1},\ldots,\varepsilon_{n})$, i.e., $\alpha\varepsilon=\beta\varepsilon$. Then we have that $\alpha_i=\beta_i$, and hence $\alpha\varepsilon^*=\beta\varepsilon^*$ for $\varepsilon^*=(\varepsilon_{1},\ldots,\varepsilon_{i-1},\alpha_i^{-1}\alpha_i, \varepsilon_{i+1},\ldots,\varepsilon_{n})$. It is obvious that $\varepsilon^*{\pi^i}^\sharp\alpha^{-1}\alpha{\pi^i}^\sharp\beta^{-1}\beta$. This implies that $\sigma_{[1]}\circ\ldots \circ\sigma_{[i-1]} \circ\sigma_{[i+1]} \circ\ldots \circ\sigma_{[n]}\subseteq{\pi^i}^\sharp_{\min}$.

$(\Rightarrow)$ Suppose that $\alpha{\pi^i}^\sharp_{\min}\beta$ in $\mathscr{I\!O}\!_{\infty}(\mathbb{Z}^n_{\operatorname{lex}})$ for some $\alpha=(\alpha_1,\ldots,\alpha_n)$ and $\beta=(\beta_1,\ldots,\beta_n)$. The there exists an idempotent $\varepsilon=(\varepsilon_{1},\ldots,\varepsilon_{n})$ in $\mathscr{I\!O}\!_{\infty}(\mathbb{Z}^n_{\operatorname{lex}})$ such that $\alpha\varepsilon=\beta\varepsilon$ and $\varepsilon{\pi^i}^\sharp \alpha^{-1}\alpha{\pi^i}^\sharp\beta^{-1}\beta$. The last two equalities imply that $\alpha_i^{-1}\alpha_i=\beta_i^{-1}\beta_i=\varepsilon_i$. This and the equality $\alpha\varepsilon=\beta\varepsilon$ imply that $\alpha_i\varepsilon_i= \beta_i\varepsilon_i$ and hence $\alpha_i=\alpha_i\alpha_i^{-1}\alpha_i= \alpha_i\varepsilon_i= \beta_i\varepsilon_i= \beta_i\beta_i^{-1}\beta_i=\beta_i$. Therefore we have that $\alpha\varepsilon^*=\beta\varepsilon^*$, where $\varepsilon^*=(\varepsilon_1,\ldots,\varepsilon_{i-1},\mathbb{I}_i, \varepsilon_{i+1},\ldots,\varepsilon_n)$, i.e., $\alpha\varepsilon_{1}^\circ\ldots\varepsilon_{i-1}^\circ \varepsilon_{i+1}^\circ\ldots\varepsilon_{n}^\circ= \beta\varepsilon_{1}^\circ\ldots\varepsilon_{i-1}^\circ \varepsilon_{i+1}^\circ\ldots\varepsilon_{n}^\circ$. Then by Proposition~\ref{proposition-2.15} we have that $\alpha(\sigma_{[1]}\circ\ldots \circ\sigma_{[i-1]}\circ\sigma_{[i+1]}\circ \ldots\circ\sigma_{[n]})\beta$ in $\mathscr{I\!O}\!_{\infty}(\mathbb{Z}^n_{\operatorname{lex}})$. This implies that ${\pi^i}^\sharp_{\min}\subseteq \sigma_{[1]}\circ\ldots\circ\sigma_{[i-1]}\circ\sigma_{[i+1]}\circ\ldots \circ\sigma_{[n]}$.
\end{proof}

For every $\alpha\in\mathscr{I\!O}\!_{\infty}(\mathbb{Z}^n_{\operatorname{lex}})$ and any $(i,j)\in\operatorname{dom}\alpha\subseteq L_n\times\mathbb{Z}$ according to Lemma~\ref{lemma-2.1} we denote $(i,j)\alpha=(i,(j)\alpha_i)$.

\begin{proposition}\label{proposition-2.18}
Let $\{i_1,\ldots,i_k\}\subseteq\{1,\ldots,n\}$ be any collection of distinct indices, $k\le n$. Then $\alpha\sigma_{[i_1,\ldots,i_k]}\beta$ in $\mathscr{I\!O}\!_{\infty}(\mathbb{Z}^n_{\operatorname{lex}})$ if and only if the following conditions hold:
\begin{itemize}
  \item[$(i)$] there exists a positive integer $p$ such that $(j)\alpha_i=(j)\beta_i$ for all integers $j$ with $|j|\ge p$ and all $i=i_1,\ldots,i_k$;
  \item[$(ii)$] $\operatorname{dom}\alpha\cap\big(\big(\{1,\ldots n\}\setminus \{i_1,\ldots,i_k\}\big)\times \mathbb{Z}\big)= \operatorname{dom}\beta\cap\big(\big(\{1,\ldots n\}\setminus \{i_1,\ldots,i_k\}\big)\times \mathbb{Z}\big)$ and $(j)\alpha_i=(j)\beta_i$ for all $(i,j)\in\operatorname{dom}\alpha\cap\big(\big(\{1,\ldots n\}\setminus \{i_1,\ldots,i_k\}\big)\times \mathbb{Z}\big)$.
\end{itemize}
\end{proposition}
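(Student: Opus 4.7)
The plan is to apply Proposition~\ref{proposition-2.15}, which reduces $\sigma_{[i_1,\ldots,i_k]}$-equivalence of $\alpha,\beta$ to the existence of idempotents $\varepsilon_{i_s}^\circ$ with $\alpha\varepsilon_{i_1}^\circ\cdots\varepsilon_{i_k}^\circ= \beta\varepsilon_{i_1}^\circ\cdots\varepsilon_{i_k}^\circ$, and then to pass to coordinate-wise equations via the direct-product decomposition $\mathscr{I\!O}\!_{\infty}(\mathbb{Z}^n_{\operatorname{lex}})\cong \bigl(\mathscr{I\!O}\!_{\infty}(\mathbb{Z})\bigr)^n$ from Proposition~\ref{proposition-2.3}$(iv)$. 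Writing $\alpha=(\alpha_1,\ldots,\alpha_n)$ and $\beta=(\beta_1,\ldots,\beta_n)$, and observing that each $\varepsilon_{i_s}^\circ$ carries an idempotent $\varepsilon_{i_s}\in\mathscr{I\!O}\!_{\infty}(\mathbb{Z})$ in its $i_s$-th coordinate and the identity in every other coordinate, the displayed equation splits into $n$ independent coordinate equations.

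On any coordinate $l\in\{1,\ldots,n\}\setminus\{i_1,\ldots,i_k\}$ the product idempotent acts as the identity, so the coordinate equation collapses to $\alpha_l=\beta_l$ as partial maps of $\mathbb{Z}$; reinterpreted through Lemma~\ref{lemma-2.1}, this is precisely condition $(ii)$. On each coordinate $i\in\{i_1,\ldots,i_k\}$ the equation becomes $\alpha_i\varepsilon_i=\beta_i\varepsilon_i$ for some idempotent $\varepsilon_i\in\mathscr{I\!O}\!_{\infty}(\mathbb{Z})$, and I claim this family of conditions is equivalent to $(i)$. In the forward direction, cofiniteness of $\operatorname{dom}\varepsilon_i$ forces $\operatorname{dom}(\alpha_i\varepsilon_i)= \operatorname{dom}(\beta_i\varepsilon_i)$ to be a cofinite subset of $\mathbb{Z}$ on which $\alpha_i$ and $\beta_i$ agree; hence some $p_i>0$ satisfies $(m)\alpha_i=(m)\beta_i$ for all $|m|\geqslant p_i$, and $p=\max\{p_{i_1},\ldots,p_{i_k}\}$ witnesses $(i)$.

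For the converse, take $p$ as in $(i)$, enlarged so that $\{m\in\mathbb{Z}:|m|\geqslant p\}\subseteq \operatorname{dom}\alpha_i\cap\operatorname{dom}\beta_i$ for every $i\in\{i_1,\ldots,i_k\}$, and set $R_i=\alpha_i\bigl(\{m:|m|\geqslant p\}\bigr)= \beta_i\bigl(\{m:|m|\geqslant p\}\bigr)$. By the eventual-shift description in Lemma~\ref{lemma-2.5} each $R_i$ is cofinite in $\mathbb{Z}$, so the identity on $R_i$ is an idempotent $\varepsilon_i\in\mathscr{I\!O}\!_{\infty}(\mathbb{Z})$; injectivity of $\alpha_i$ and $\beta_i$ then forces $\operatorname{dom}(\alpha_i\varepsilon_i)= \operatorname{dom}(\beta_i\varepsilon_i)= \{m:|m|\geqslant p\}$, on which $\alpha_i=\beta_i$ by $(i)$. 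Reassembling the coordinate-wise idempotents into the $\varepsilon_{i_s}^\circ$ and invoking Proposition~\ref{proposition-2.15} completes the proof. The main obstacle is this converse step: one has to cook up $\varepsilon_i$ so that the two composed domains genuinely coincide, and the device of using the common tail-image $R_i$ works only because Lemma~\ref{lemma-2.5} guarantees cofiniteness of $R_i$ while injectivity of $\alpha_i,\beta_i$ pins down the composed domains.
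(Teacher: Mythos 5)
Your proof is correct, and its overall architecture is the same as the paper's: both directions run through Proposition~\ref{proposition-2.15} and the coordinatewise decomposition of Proposition~\ref{proposition-2.3}$(iv)$, with the complementary coordinates yielding condition $(ii)$ and the selected coordinates yielding condition $(i)$. The one place you genuinely diverge is the converse, and there your version is tighter than the paper's. The paper takes $\varepsilon_i^\circ=\operatorname{Id}_{\{(i,j)\colon |j|\ge p\}}\cup\operatorname{Id}_{(L_n\setminus\{i\})\times\mathbb{Z}}$ with the same $p$ as in $(i)$ and leaves the equality $\alpha\varepsilon_1^\circ\cdots\varepsilon_k^\circ=\beta\varepsilon_1^\circ\cdots\varepsilon_k^\circ$ as ``simple verifications''; as stated this needs an extra argument, since a point $(i,j)$ with $|j|<p$ may be mapped by $\alpha$, but not by $\beta$, into the region $\{(i,m)\colon |m|\ge p\}$ (e.g.\ with $p=3$, $\alpha_i$ fixing the tails but sending $0\mapsto 3$ while $\beta_i$ fixes $0$), so the two composed domains need not coincide for that $p$; one must first enlarge $p$, using monotonicity to bound the images of the middle block, before the paper's idempotent works. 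Your device of taking the identity on the common tail-image $R_i=\alpha_i\bigl(\{m\colon|m|\ge p\}\bigr)=\beta_i\bigl(\{m\colon|m|\ge p\}\bigr)$ sidesteps this entirely: injectivity pins both composed domains down to exactly the tail $\{m\colon|m|\ge p\}$, where $(i)$ gives equality, and cofiniteness of $R_i$ is immediate (cofinite range minus the finite image of the middle block). So the proposal is a correct proof by the same route, with a cleaner witness in the converse step.
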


\begin{proof}
Without loss of generality we can assume that $i_1=1,\ldots,i_k=k$.

$(\Rightarrow)$ Suppose that $\alpha\sigma_{[1,\ldots,k]}\beta$ in $\mathscr{I\!O}\!_{\infty}(\mathbb{Z}^n_{\operatorname{lex}})$. Then there exist idempotents $\varepsilon_1^\circ,\ldots,\varepsilon_k^\circ$ in $\mathscr{I\!O}\!_{\infty}(\mathbb{Z}^n_{\operatorname{lex}})$ such that $\alpha\varepsilon_1^\circ\ldots\varepsilon_k^\circ= \beta\varepsilon_1^\circ\ldots\varepsilon_k^\circ$. This implies assertion $(ii)$.

We observe that the definition of the idempotent $\varepsilon_i^\circ$, $i=1,\ldots,n$, implies that the restriction $\varepsilon_i^\circ|_{(L_n\setminus\{i\})\times \mathbb{Z}}$ is an identity map of the set $(L_n\setminus\{i\})\times \mathbb{Z}$. Therefore there exists a positive integer $p_i$ such that
\begin{equation*}
    \big(\{(i,j)\colon |j|\ge p_i\}\big)\cup(L_n\setminus\{i\})\times \mathbb{Z}\subseteq \operatorname{dom}\varepsilon_i^\circ.
\end{equation*}
We put $p=\max\{p_1,\ldots,p_k\}$ and $p$ requested as in $(ii)$.

$(\Leftarrow)$ Suppose that assertions $(i)$ and $(ii)$ hold. By $\operatorname{Id}_M$ we denote the partial identity map of the subset $M$ for any $M\subseteq L_n\times\mathbb{Z}$. For every $i=1,\ldots,n$ we put
\begin{equation*}
\varepsilon_i^\circ=\operatorname{Id}_{\{(i,j)\mid |j|\ge p\}}\cup \operatorname{Id}_{(L_n\setminus\{i\})\times \mathbb{Z}}.
\end{equation*}
Simple verifications show that $\alpha\varepsilon_1^\circ\ldots\varepsilon_k^\circ= \beta\varepsilon_1^\circ\ldots\varepsilon_k^\circ$ and hence $\alpha\sigma_{[1,\ldots,k]}\beta$ in $\mathscr{I\!O}\!_{\infty}(\mathbb{Z}^n_{\operatorname{lex}})$.
\end{proof}

\section{Generators and automorphisms  of the semigroup
$\mathscr{I\!O}\!_{\infty}(\mathbb{Z}^n_{\operatorname{lex}})$}\label{section-3}

We put $\mathscr{O}\!_{\infty}^{\;0}(\mathbb{Z})= \{\alpha\in\mathscr{I\!O}\!_{\infty}(\mathbb{Z})\colon \operatorname{dom}\alpha=\mathbb{Z}\}$ and $\mathscr{O}\!_{\infty}^{\;[0]}(\mathbb{Z})= \{\alpha\in\mathscr{I\!O}\!_{\infty}(\mathbb{Z})\colon \operatorname{ran}\alpha=\mathbb{Z}\}$.

\begin{proposition}\label{proposition-3.1}
$\mathscr{O}\!_{\infty}^{\;0}(\mathbb{Z})$ and $\mathscr{O}\!_{\infty}^{\;[0]}(\mathbb{Z})$ are antiisomorphic subsemigroups of $\mathscr{I\!O}\!_{\infty}(\mathbb{Z})$.
\end{proposition}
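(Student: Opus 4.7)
My plan is to establish the proposition in two short steps: first verify that each of the two sets is closed under the semigroup operation, and then exhibit the antiisomorphism via the inversion map on $\mathscr{I\!O}\!_{\infty}(\mathbb{Z})$.

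For closure, I would argue as follows. If $\alpha,\beta\in\mathscr{O}\!_{\infty}^{\;0}(\mathbb{Z})$, then $\operatorname{dom}\alpha=\operatorname{dom}\beta=\mathbb{Z}$, so
\begin{equation*}
\operatorname{dom}(\alpha\beta)=\{y\in\operatorname{dom}\alpha\mid y\alpha\in\operatorname{dom}\beta\}=\mathbb{Z},
\end{equation*}
which gives $\alpha\beta\in\mathscr{O}\!_{\infty}^{\;0}(\mathbb{Z})$. Dually, if $\alpha,\beta\in\mathscr{O}\!_{\infty}^{\;[0]}(\mathbb{Z})$, then $\operatorname{ran}(\alpha\beta)=(\operatorname{ran}\alpha\cap\operatorname{dom}\beta)\beta=(\operatorname{dom}\beta)\beta=\operatorname{ran}\beta=\mathbb{Z}$, so $\alpha\beta\in\mathscr{O}\!_{\infty}^{\;[0]}(\mathbb{Z})$. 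This establishes both subsemigroups.

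For the antiisomorphism, I would use the inversion $\operatorname{inv}\colon\alpha\mapsto\alpha^{-1}$ of the inverse semigroup $\mathscr{I\!O}\!_{\infty}(\mathbb{Z})$. Since $\operatorname{dom}\alpha^{-1}=\operatorname{ran}\alpha$ and $\operatorname{ran}\alpha^{-1}=\operatorname{dom}\alpha$ for every partial bijection $\alpha$, the restriction of $\operatorname{inv}$ sends $\mathscr{O}\!_{\infty}^{\;0}(\mathbb{Z})$ bijectively onto $\mathscr{O}\!_{\infty}^{\;[0]}(\mathbb{Z})$, with inverse given again by $\operatorname{inv}$ (which is involutive). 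The identity $(\alpha\beta)^{-1}=\beta^{-1}\alpha^{-1}$, valid in any inverse semigroup, shows that this bijection reverses the multiplication, so it is the required antiisomorphism.

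There is no real obstacle here; the only point requiring any care is the verification that $\operatorname{ran}(\alpha\beta)=\mathbb{Z}$ when both ranges are full, where one must avoid confusing the convention $x(\alpha\beta)=(x\alpha)\beta$ from Section~1. Once the composition convention is kept straight, closure follows directly and the inversion map immediately supplies the antiisomorphism.
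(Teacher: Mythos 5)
Your proposal is correct and follows essentially the same route as the paper: the paper also notes that the two sets are subsemigroups (leaving the closure computations as ``simple verifications'', which you spell out) and takes the restriction of the inversion of the inverse semigroup $\mathscr{I\!O}\!_{\infty}(\mathbb{Z})$ to $\mathscr{O}\!_{\infty}^{\;0}(\mathbb{Z})$ as the antiisomorphism onto $\mathscr{O}\!_{\infty}^{\;[0]}(\mathbb{Z})$.
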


\begin{proof}
Simple verifications imply that $\mathscr{O}\!_{\infty}^{\;0}(\mathbb{Z})$ and $\mathscr{O}\!_{\infty}^{\;[0]}(\mathbb{Z})$ are subsemigroups of the semigroup $\mathscr{I\!O}\!_{\infty}(\mathbb{Z})$. We define $\mathfrak{i}\colon \mathscr{O}\!_{\infty}^{\;0}(\mathbb{Z})\rightarrow \mathscr{O}\!_{\infty}^{\;[0]}(\mathbb{Z})$ by the formula $(\alpha)\mathfrak{i}=\alpha^{-1}$. It is obvious that so defined map $\mathfrak{i}\colon \mathscr{O}\!_{\infty}^{\;0}(\mathbb{Z})\rightarrow \mathscr{O}\!_{\infty}^{\;[0]}(\mathbb{Z})$ is surjective and since the map $\mathfrak{i}$ is the restriction of inversion of the inverse semigroup $\mathscr{I\!O}\!_{\infty}(\mathbb{Z})$ onto the subsemigroup $\mathscr{O}\!_{\infty}^{\;0}(\mathbb{Z})$ we get that it is an antiisomorphism.
\end{proof}

It is obvious that the group of units of the semigroup $\mathscr{I\!O}\!_{\infty}(\mathbb{Z})$ is isomorphic to the group of units of $\mathscr{O}\!_{\infty}^{\;0}(\mathbb{Z})$ ($\mathscr{O}\!_{\infty}^{\;[0]}(\mathbb{Z})$), and moreover by Proposition~\ref{proposition-2.6} it is isomorphic to the additive group of integers $\mathbb{Z}(+)$.

Simple observations imply the following proposition.

\begin{proposition}\label{proposition-3.2}
The subsemigroups $\mathscr{O}\!_{\infty}^{\;0}(\mathbb{Z})$ and $\mathscr{O}\!_{\infty}^{\;[0]}(\mathbb{Z})$ (as a subset) generate the inverse semigroup $\mathscr{I\!O}\!_{\infty}(\mathbb{Z})$, and moreover $\mathscr{O}\!_{\infty}^{\;[0]}(\mathbb{Z})\cdot \mathscr{O}\!_{\infty}^{\;0}(\mathbb{Z})= \mathscr{I\!O}\!_{\infty}(\mathbb{Z})$.
\end{proposition}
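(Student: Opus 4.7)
The plan is to prove the stronger equality $\mathscr{O}\!_{\infty}^{\;[0]}(\mathbb{Z})\cdot \mathscr{O}\!_{\infty}^{\;0}(\mathbb{Z})= \mathscr{I\!O}\!_{\infty}(\mathbb{Z})$ first, since the assertion that these two subsemigroups generate $\mathscr{I\!O}\!_{\infty}(\mathbb{Z})$ as an inverse semigroup (indeed, already as a semigroup) is an immediate consequence: every element of $\mathscr{I\!O}\!_{\infty}(\mathbb{Z})$ would then be a single product of two generators.

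The inclusion $\mathscr{O}\!_{\infty}^{\;[0]}(\mathbb{Z})\cdot \mathscr{O}\!_{\infty}^{\;0}(\mathbb{Z})\subseteq \mathscr{I\!O}\!_{\infty}(\mathbb{Z})$ is automatic from the fact that $\mathscr{I\!O}\!_{\infty}(\mathbb{Z})$ is closed under composition. For the reverse inclusion, fix an arbitrary $\alpha\in\mathscr{I\!O}\!_{\infty}(\mathbb{Z})$. The first step is to observe that $\operatorname{dom}\alpha$, being cofinite in $\mathbb{Z}$, is unbounded above and below, and hence is order-isomorphic to $(\mathbb{Z},\le)$; let $\beta\colon\operatorname{dom}\alpha\to\mathbb{Z}$ be any such order isomorphism (explicitly, enumerate $\operatorname{dom}\alpha$ as $\ldots<a_{-1}<a_0<a_1<\ldots$ and set $(a_k)\beta=k$). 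Regarded as a partial self-map of $\mathbb{Z}$, the map $\beta$ is monotone and injective with $\operatorname{dom}\beta=\operatorname{dom}\alpha$ cofinite and $\operatorname{ran}\beta=\mathbb{Z}$, so $\beta\in\mathscr{O}\!_{\infty}^{\;[0]}(\mathbb{Z})$.

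The second step is to put $\gamma=\beta^{-1}\alpha$. Since $\beta^{-1}\colon\mathbb{Z}\to\operatorname{dom}\alpha$ is a monotone bijection and $\alpha$ carries $\operatorname{dom}\alpha$ bijectively and monotonically onto the cofinite set $\operatorname{ran}\alpha$, the composite $\gamma$ is a monotone injective partial self-map of $\mathbb{Z}$ with $\operatorname{dom}\gamma=\mathbb{Z}$ and $\operatorname{ran}\gamma=\operatorname{ran}\alpha$ cofinite; therefore $\gamma\in\mathscr{O}\!_{\infty}^{\;0}(\mathbb{Z})$. Finally, because $\mathscr{I\!O}\!_{\infty}(\mathbb{Z})$ is an inverse semigroup, $\beta\beta^{-1}$ is the partial identity on $\operatorname{dom}\beta=\operatorname{dom}\alpha$, whence
\begin{equation*}
\beta\cdot\gamma=\beta\cdot\beta^{-1}\cdot\alpha=\alpha,
\end{equation*}
which yields $\alpha\in\mathscr{O}\!_{\infty}^{\;[0]}(\mathbb{Z})\cdot \mathscr{O}\!_{\infty}^{\;0}(\mathbb{Z})$, as required.

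There is no genuine obstacle here; the only subtlety worth pointing out is checking that all three composition identities (the domain of $\gamma$ being all of $\mathbb{Z}$, the range of $\beta$ being all of $\mathbb{Z}$, and $\beta\gamma=\alpha$) are consistent with the convention that composition is computed as $\operatorname{dom}(\beta\gamma)=\{x\in\operatorname{dom}\beta\mid(x)\beta\in\operatorname{dom}\gamma\}$. Since $\operatorname{dom}\gamma=\mathbb{Z}$, this condition is vacuous, and the calculation $\beta\gamma=\alpha$ reduces to the inverse-semigroup identity $\beta\beta^{-1}\alpha=\alpha$, which in turn follows from $\operatorname{dom}\beta=\operatorname{dom}\alpha$.
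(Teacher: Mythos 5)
Your proposal is correct: the factorization $\alpha=\beta\cdot(\beta^{-1}\alpha)$, where $\beta\in\mathscr{O}\!_{\infty}^{\;[0]}(\mathbb{Z})$ is an order isomorphism of the cofinite set $\operatorname{dom}\alpha$ onto $\mathbb{Z}$ and $\beta^{-1}\alpha\in\mathscr{O}\!_{\infty}^{\;0}(\mathbb{Z})$, is verified correctly under the paper's left-to-right composition convention, including the domain and range checks and the identity $\beta\beta^{-1}\alpha=\alpha$. The paper itself offers no written proof (it dismisses the statement as following from ``simple observations''), and your argument is exactly the kind of direct verification intended, so it fills that gap in essentially the expected way.
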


For an arbitrary integer $k$ we define the maps $\varepsilon_k\colon \mathbb{Z}\rightarrow\mathbb{Z}$ and $\varsigma_k\colon\mathbb{Z}\rightarrow\mathbb{Z}$ by the formulae
\begin{equation*}
    (i)\varepsilon_k=
    \left\{
      \begin{array}{cl}
        i+1, & \hbox{if~} i>k;\\
        i, & \hbox{if~} i\leqslant k
      \end{array}
    \right.
\qquad \hbox{and} \qquad (i)\varsigma_k=i+k.
\end{equation*}
Obviously that $\varepsilon_k\in \mathscr{O}\!_{\infty}^{\;0}(\mathbb{Z})$, $\varepsilon_k^{-1}\in\mathscr{O}\!_{\infty}^{\;[0]}(\mathbb{Z})$ and $\varsigma_k$ is an element of the group of units of $\mathscr{I\!O}\!_{\infty}(\mathbb{Z})$, for every $k\in\mathbb{Z}$.

\begin{proposition}\label{proposition-3.3}
The set $\{\varepsilon_0,\varsigma_1\}$ generates the semigroup $\mathscr{I\!O}\!_{\infty}(\mathbb{Z})$ as an inverse semigroup.
\end{proposition}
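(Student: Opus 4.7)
The plan is to reduce to showing that every element of $\mathscr{O}\!_{\infty}^{\;0}(\mathbb{Z})$ belongs to $\langle \varepsilon_0, \varsigma_1 \rangle$, after which Propositions~\ref{proposition-3.1} and~\ref{proposition-3.2}, combined with closure under inversion (automatic when generating as an inverse semigroup), will yield all of $\mathscr{I\!O}\!_{\infty}(\mathbb{Z})$.

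First I would collect the auxiliary generators. Since $\varsigma_1$ lies in the group of units, $\varsigma_k = \varsigma_1^k \in \langle \varsigma_1 \rangle$ for every $k \in \mathbb{Z}$. A direct pointwise calculation then establishes the conjugation identity
\[
\varepsilon_k \;=\; \varsigma_1^{-k}\,\varepsilon_0\,\varsigma_1^k,
\]
because evaluating the right-hand side at $i$ gives $((i-k)\varepsilon_0)+k$, which equals $i+1$ when $i > k$ and $i$ when $i \leq k$. Hence every $\varepsilon_k$ already lies in $\langle \varepsilon_0,\varsigma_1\rangle$.

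Next, given $\alpha \in \mathscr{O}\!_{\infty}^{\;0}(\mathbb{Z})$, Lemma~\ref{lemma-2.5} (with $n=1$) supplies an integer $c_L$ such that $(i)\alpha = i + c_L$ on a left tail of $\mathbb{Z}$. Set $\beta = \alpha\,\varsigma_1^{-c_L}$, so that $\beta \in \mathscr{O}\!_{\infty}^{\;0}(\mathbb{Z})$ is the identity on a left tail and $\alpha = \beta\,\varsigma_1^{c_L}$. Writing the finite set $\mathbb{Z} \setminus \operatorname{ran}\beta = \{a_1 < a_2 < \cdots < a_m\}$, the key claim, proved by induction on $m$, is that
\[
\beta \;=\; \varepsilon_{a_1 - 1}\,\varepsilon_{a_2 - 1}\cdots \varepsilon_{a_m - 1}.
\]
For the induction step, let $\beta'$ be the monotone bijection of $\mathbb{Z}$ onto $\mathbb{Z}\setminus\{a_1,\ldots,a_{m-1}\}$ that is the identity on a left tail; then $\beta'\,\varepsilon_{a_m-1}$ is again the identity on a left tail (since $a_1 \leq a_m - 1$) and has range exactly $\mathbb{Z} \setminus \{a_1, \ldots, a_m\}$, because each $a_j \leq a_m - 1$ is fixed by $\varepsilon_{a_m - 1}$ while $a_m$ is the single new omission. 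Uniqueness of a monotone bijection of $\mathbb{Z}$ onto a prescribed cofinite subset subject to the identity-on-left-tail condition then forces $\beta = \beta'\,\varepsilon_{a_m - 1}$.

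Putting the three steps together, every $\alpha \in \mathscr{O}\!_{\infty}^{\;0}(\mathbb{Z})$ is expressible as a product of $\varepsilon_k$'s and powers of $\varsigma_1$, hence lies in $\langle \varepsilon_0,\varsigma_1\rangle$; Proposition~\ref{proposition-3.1} together with inverse closure yields $\mathscr{O}\!_{\infty}^{\;[0]}(\mathbb{Z}) \subseteq \langle \varepsilon_0,\varsigma_1 \rangle$; and the factorization $\mathscr{I\!O}\!_{\infty}(\mathbb{Z}) = \mathscr{O}\!_{\infty}^{\;[0]}(\mathbb{Z})\cdot \mathscr{O}\!_{\infty}^{\;0}(\mathbb{Z})$ from Proposition~\ref{proposition-3.2} completes the argument. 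The main obstacle I expect will be the inductive decomposition of $\beta$: one must simultaneously verify the range calculation and appeal to the uniqueness of monotone bijections pinned down by a left-tail condition, and it is this pair of checks that makes the step slightly more delicate than just counting missing values.
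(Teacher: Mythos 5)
Your proof is correct, but it takes a different route at the key step. The paper disposes of the core content by citation: it invokes Theorem~4.2 of Doroshenko's paper to assert that $\{\varepsilon_0,\varsigma_1\}$ generates $\mathscr{O}\!_{\infty}^{\;0}(\mathbb{Z})$, then applies Proposition~\ref{proposition-3.1} (inversion carries this to $\mathscr{O}\!_{\infty}^{\;[0]}(\mathbb{Z})$, generated by $\{\varepsilon_0^{-1},\varsigma_{-1}\}$) and finishes with the factorization $\mathscr{O}\!_{\infty}^{\;[0]}(\mathbb{Z})\cdot\mathscr{O}\!_{\infty}^{\;0}(\mathbb{Z})=\mathscr{I\!O}\!_{\infty}(\mathbb{Z})$ of Proposition~\ref{proposition-3.2}. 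You use exactly the same outer scaffolding (Propositions~\ref{proposition-3.1} and~\ref{proposition-3.2} plus closure under inversion), but you replace the citation by a self-contained argument: the conjugation identity $\varepsilon_k=\varsigma_1^{-k}\varepsilon_0\varsigma_1^{k}$ (which checks out under the right-action convention), normalization of the left tail by a power of $\varsigma_1$ via Lemma~\ref{lemma-2.5}, and an induction on the finitely many missing range values, pinned down by the uniqueness of a monotone bijection of $\mathbb{Z}$ onto a prescribed cofinite set that is the identity on a left tail (that uniqueness follows since any two such differ by an order automorphism of $\mathbb{Z}$, i.e.\ a translation, which must be trivial). Your range computation in the induction step is right, since each $a_j$ with $j<m$ is a fixed point of $\varepsilon_{a_m-1}$. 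What each approach buys: the paper's proof is two lines but leans on an external generation theorem for $\mathscr{O}\!_{\infty}^{\;0}(\mathbb{Z})$; yours is longer but self-contained, produces an explicit factorization $\alpha=\varepsilon_{a_1-1}\cdots\varepsilon_{a_m-1}\varsigma_1^{c_L}$, and makes transparent that the generation is genuinely inverse-semigroup generation — indeed $\varsigma_1^{-1}$ is indispensable, since both $\varepsilon_0$ and $\varsigma_1$ satisfy $(i)\gamma\geqslant i$, so $\{\varepsilon_0,\varsigma_1\}$ cannot generate $\mathscr{O}\!_{\infty}^{\;0}(\mathbb{Z})$ as a plain semigroup. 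Only cosmetic additions are needed: state the trivial case $m=0$ (empty product, $\beta=\mathbb{I}$) and record the one-line uniqueness lemma you appeal to.
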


\begin{proof}
By Theorem~4.2 of \cite{Doroshenko2005} the set $\{\varepsilon_0,\varsigma_1\}$ generates the semigroup $\mathscr{O}\!_{\infty}^{\;0}(\mathbb{Z})$ and hence by Proposition~\ref{proposition-3.1} we get that the set $\{\varepsilon_0^{-1},\varsigma_1^{-1}=\varsigma_{-1}\}$ generates the semigroup $\mathscr{O}\!_{\infty}^{\;[0]}(\mathbb{Z})$. Next, Proposition~\ref{proposition-3.2} implies the statement of the proposition.
\end{proof}

Proposition~\ref{proposition-3.3} implies the following

\begin{theorem}\label{theorem-3.4}
For every integer $k$ the set $\{\varepsilon_k,\varsigma_1\}$ generates the semigroup $\mathscr{I\!O}\!_{\infty}(\mathbb{Z})$ as an inverse semigroup and hence $\mathscr{I\!O}\!_{\infty}(\mathbb{Z})$ is finitely generated. Moreover, every minimal system of generators of the semigroup $\mathscr{I\!O}\!_{\infty}(\mathbb{Z})$ (as an inverse semigroup) has the form $\{\varepsilon_k,\varsigma_{i_1},\ldots,\varsigma_{i_m}\}$, where $k$ is an arbitrary integer and the set of indices $i_1,\ldots, i_m$ is a
minimal system of generators of the semigroup $\mathbb{Z}(+)$ (as a group).
\end{theorem}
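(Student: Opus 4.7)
My plan is to reduce the generation statement to Proposition~\ref{proposition-3.3} via a simple conjugation identity, and then to handle the multi-generator case through the group structure of $H(\mathbb{I})$ and the least group congruence $\sigma$ of Theorem~\ref{theorem-2.9}.

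First I would verify by direct coordinate evaluation that
\begin{equation*}
    \varsigma_1^{\,k}\cdot\varepsilon_k\cdot\varsigma_1^{-k}=\varepsilon_0
\end{equation*}
holds in $\mathscr{I\!O}\!_{\infty}(\mathbb{Z})$ for every integer $k$: the composition sends $i$ to $i+k$, which is $>k$ iff $i>0$, so $\varepsilon_k$ adds $1$ precisely when $i>0$, and $\varsigma_1^{-k}$ then returns the output to $i+1$ (resp.\ $i$) for $i>0$ (resp.\ $i\le 0$), i.e., to $(i)\varepsilon_0$. Since $\varsigma_1^{\pm k}$ lies in any inverse subsemigroup containing $\varsigma_1$, this gives $\varepsilon_0\in\langle\varepsilon_k,\varsigma_1\rangle$, and Proposition~\ref{proposition-3.3} yields $\langle\varepsilon_k,\varsigma_1\rangle=\mathscr{I\!O}\!_{\infty}(\mathbb{Z})$.

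For the multi-generator claim, if $\{i_1,\ldots,i_m\}$ generates $\mathbb{Z}(+)$ as a group, then via the isomorphism $H(\mathbb{I})\cong\mathbb{Z}(+)$ of Proposition~\ref{proposition-2.6} the element $\varsigma_1$ lies in the subgroup generated by $\varsigma_{i_1},\ldots,\varsigma_{i_m}$, so together with the first step we obtain $\langle\varepsilon_k,\varsigma_{i_1},\ldots,\varsigma_{i_m}\rangle=\mathscr{I\!O}\!_{\infty}(\mathbb{Z})$. For minimality I would argue via the $\sigma$-quotient: the images of $\varepsilon_k$ and $\varsigma_{i_l}$ in $\mathbb{Z}(+)\times\mathbb{Z}(+)$ are $(0,1)$ and $(i_l,i_l)$ respectively, so after dropping $\varsigma_{i_l}$ the $\sigma$-images span the subgroup $\langle i_j\colon j\neq l\rangle\times\mathbb{Z}$, which is proper in $\mathbb{Z}\times\mathbb{Z}$ exactly when $\{i_j\colon j\neq l\}$ fails to generate $\mathbb{Z}(+)$; and dropping $\varepsilon_k$ leaves only bijections, generating at most the proper inverse subsemigroup $H(\mathbb{I})$. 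Hence the minimality condition transfers exactly to $\{i_1,\ldots,i_m\}$ being a minimal system of generators of $\mathbb{Z}(+)$.

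The hard part will be the converse direction---showing that every minimal system of generators of $\mathscr{I\!O}\!_{\infty}(\mathbb{Z})$ already has this prescribed form. My plan here is to combine two ingredients. First, an arbitrary minimal generating set $X$ projects under the map of Theorem~\ref{theorem-2.9} to a generating set of $\mathbb{Z}(+)\times\mathbb{Z}(+)$, and since every $\varsigma_r$ maps to the diagonal, $X$ must contain at least one non-unit whose $\sigma$-image lies off the diagonal. Second, using the factorization $\mathscr{O}\!_{\infty}^{\;[0]}(\mathbb{Z})\cdot\mathscr{O}\!_{\infty}^{\;0}(\mathbb{Z})=\mathscr{I\!O}\!_{\infty}(\mathbb{Z})$ of Proposition~\ref{proposition-3.2} together with the description of irreducible generating sets of $\mathscr{O}\!_{\infty}^{\;0}(\mathbb{Z})$ in Theorem~4.2 of \cite{Doroshenko2005}, one argues that, after absorbing unit factors into the group-generator part of $X$, this non-unit generator reduces to $\varepsilon_k$ for some integer $k$; the remaining generators must then form a minimal system of generators of $H(\mathbb{I})\cong\mathbb{Z}(+)$.
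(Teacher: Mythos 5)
The first two paragraphs of your proposal are correct and, for the part of the statement the paper actually argues, follow essentially the paper's route: the paper presents Theorem~3.4 as an immediate consequence of Proposition~3.3, and your identity $\varsigma_1^{\,k}\cdot\varepsilon_k\cdot\varsigma_1^{-k}=\varepsilon_0$ simply makes that deduction explicit. Your argument that $\{\varepsilon_k,\varsigma_{i_1},\ldots,\varsigma_{i_m}\}$ generates, and is minimal when $\{i_1,\ldots,i_m\}$ is a minimal generating set of $\mathbb{Z}(+)$, is also sound: the images under the homomorphism of Theorem~2.9 (with $n=1$) are $(0,1)$ and $(i_j,i_j)$, an inverse subsemigroup of a group is a subgroup, so dropping $\varsigma_{i_l}$ leaves $\sigma$-images generating only $\langle i_j\colon j\neq l\rangle\times\mathbb{Z}$, while dropping $\varepsilon_k$ leaves only units, which generate no more than $H(\mathbb{I})$.

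The genuine gap is the converse classification, which you yourself flag as ``the hard part'': you never prove that the non-unit member of an arbitrary minimal generating set must literally be some $\varepsilon_k$. The sentence ``one argues that, after absorbing unit factors\ldots, this non-unit generator reduces to $\varepsilon_k$'' is precisely the assertion that needs proof, and neither the factorization $\mathscr{O}\!_{\infty}^{\;[0]}(\mathbb{Z})\cdot\mathscr{O}\!_{\infty}^{\;0}(\mathbb{Z})=\mathscr{I\!O}\!_{\infty}(\mathbb{Z})$ nor Theorem~4.2 of \cite{Doroshenko2005} (which describes generating sets of $\mathscr{O}\!_{\infty}^{\;0}(\mathbb{Z})$ as a plain semigroup, not arbitrary elements of $\mathscr{I\!O}\!_{\infty}(\mathbb{Z})$) delivers it. Worse, the step would in fact fail as stated: $\{\varepsilon_0^{-1},\varsigma_1\}$ generates $\mathscr{I\!O}\!_{\infty}(\mathbb{Z})$ as an inverse semigroup (its inverse closure contains $\varepsilon_0$), and it is minimal, since $\varsigma_1$ alone generates only $H(\mathbb{I})$ and $\varepsilon_0^{-1}$ alone has $\sigma$-image generating only $\{0\}\times\mathbb{Z}$; yet $\varepsilon_0^{-1}$ is not any $\varepsilon_k$, its domain being $\mathbb{Z}\setminus\{1\}$. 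So the ``moreover'' part cannot be obtained by your plan without either a substantive additional argument pinning down the admissible non-unit generators up to the relevant symmetry (inversion, conjugation by units) or a weakening/reinterpretation of the statement. For comparison, the paper itself offers no proof of this part at all: Theorem~3.4 is stated as following from Proposition~3.3, so on this point your attempt is no less complete than the source, but it is not a proof.
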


\begin{remark}\label{remark-3.5}
It is obvious that the $\{1\}$ and $\{-1\}$ are the minimal systems of generators of the additive group group of integers $\mathbb{Z}(+)$ as a group.
\end{remark}

For an arbitrary positive integer $n$ we put $\mathscr{O}\!_{\infty}^{\;0}(\mathbb{Z}^n_{\operatorname{lex}})= \big\{\alpha\in\mathscr{I\!O}\!_{\infty}(\mathbb{Z}^n_{\operatorname{lex}})\colon \operatorname{dom}\alpha=L_n\times\mathbb{Z}\big\}$ and $\mathscr{O}\!_{\infty}^{\;[0]}(\mathbb{Z}^n_{\operatorname{lex}})= \big\{\alpha\in\mathscr{I\!O}\!_{\infty}(\mathbb{Z}^n_{\operatorname{lex}})\colon \operatorname{ran}\alpha=L_n\times\mathbb{Z}\big\}$.

The proof of the following proposition is similar to the proof of Proposition~\ref{proposition-3.1}.

\begin{proposition}\label{proposition-3.6}
$\mathscr{O}\!_{\infty}^{\;0}(\mathbb{Z}^n_{\operatorname{lex}})$ and $\mathscr{O}\!_{\infty}^{\;[0]}(\mathbb{Z}^n_{\operatorname{lex}})$ are antiisomorphic subsemigroups of $\mathscr{I\!O}\!_{\infty}(\mathbb{Z}^n_{\operatorname{lex}})$.
\end{proposition}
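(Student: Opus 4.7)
The plan is to mirror the proof of Proposition~\ref{proposition-3.1} almost verbatim, since nothing about the lexicographic factor $L_n$ interferes with the arguments used in the $n=1$ case; the inversion operation is what supplies the antiisomorphism.

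First I would verify that $\mathscr{O}\!_{\infty}^{\;0}(\mathbb{Z}^n_{\operatorname{lex}})$ and $\mathscr{O}\!_{\infty}^{\;[0]}(\mathbb{Z}^n_{\operatorname{lex}})$ are actually subsemigroups of $\mathscr{I\!O}\!_{\infty}(\mathbb{Z}^n_{\operatorname{lex}})$. For $\alpha,\beta\in \mathscr{O}\!_{\infty}^{\;0}(\mathbb{Z}^n_{\operatorname{lex}})$ one has $\operatorname{dom}(\alpha\beta)=\{x\in\operatorname{dom}\alpha\mid (x)\alpha\in\operatorname{dom}\beta\}=\operatorname{dom}\alpha=L_n\times\mathbb{Z}$, since $\operatorname{dom}\beta=L_n\times\mathbb{Z}$; hence $\alpha\beta\in\mathscr{O}\!_{\infty}^{\;0}(\mathbb{Z}^n_{\operatorname{lex}})$. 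The argument for $\mathscr{O}\!_{\infty}^{\;[0]}(\mathbb{Z}^n_{\operatorname{lex}})$ is the dual one, using that the range of a composition of two surjective (onto $L_n\times\mathbb{Z}$) partial maps is again $L_n\times\mathbb{Z}$.

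Next I would introduce the map $\mathfrak{i}\colon \mathscr{O}\!_{\infty}^{\;0}(\mathbb{Z}^n_{\operatorname{lex}}) \to \mathscr{O}\!_{\infty}^{\;[0]}(\mathbb{Z}^n_{\operatorname{lex}})$ defined by $(\alpha)\mathfrak{i}=\alpha^{-1}$, where $\alpha^{-1}$ denotes the inverse of $\alpha$ in the inverse semigroup $\mathscr{I\!O}\!_{\infty}(\mathbb{Z}^n_{\operatorname{lex}})$. Since $\operatorname{dom}\alpha^{-1}=\operatorname{ran}\alpha$ and $\operatorname{ran}\alpha^{-1}=\operatorname{dom}\alpha=L_n\times\mathbb{Z}$, the map $\mathfrak{i}$ is well defined and takes values in $\mathscr{O}\!_{\infty}^{\;[0]}(\mathbb{Z}^n_{\operatorname{lex}})$. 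The same observation applied to the inverse of $\mathfrak{i}$ shows surjectivity, and injectivity is immediate from the fact that inversion is an involution on any inverse semigroup.

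Finally, to verify the antihomomorphism property $(\alpha\beta)\mathfrak{i}=(\beta)\mathfrak{i}\cdot(\alpha)\mathfrak{i}$, I would invoke the standard identity $(\alpha\beta)^{-1}=\beta^{-1}\alpha^{-1}$ valid in any inverse semigroup, which is exactly what is needed. I do not foresee a genuine obstacle here; the only point that requires a line of care is checking that the composition of two full-domain elements is again a full-domain element in $\mathscr{I\!O}\!_{\infty}(\mathbb{Z}^n_{\operatorname{lex}})$, i.e., that $\mathscr{O}\!_{\infty}^{\;0}(\mathbb{Z}^n_{\operatorname{lex}})$ is truly closed under the semigroup operation, which follows directly from the definitions.
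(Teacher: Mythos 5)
Your proposal is correct and follows exactly the route the paper intends: the paper's proof of Proposition~\ref{proposition-3.6} is simply the remark that it is ``similar to the proof of Proposition~\ref{proposition-3.1}'', i.e.\ the restriction of the inversion of $\mathscr{I\!O}\!_{\infty}(\mathbb{Z}^n_{\operatorname{lex}})$ to $\mathscr{O}\!_{\infty}^{\;0}(\mathbb{Z}^n_{\operatorname{lex}})$ is an antiisomorphism onto $\mathscr{O}\!_{\infty}^{\;[0]}(\mathbb{Z}^n_{\operatorname{lex}})$, which is precisely your argument. Your added verification that the two subsets are closed under composition is a correct (and slightly more explicit) filling-in of the ``simple verifications'' step.
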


Proposition~\ref{proposition-2.3}$(iv)$ implies the following:

\begin{proposition}\label{proposition-3.7}
For every positive integer $n$ the semigroup  $\mathscr{O}\!_{\infty}^{\;0}(\mathbb{Z}^n_{\operatorname{lex}})$ $\big(\hbox{resp., } \mathscr{O}\!_{\infty}^{\;[0]}(\mathbb{Z}^n_{\operatorname{lex}})\big)$ is isomorphic to the direct power $\left(\mathscr{O}\!_{\infty}^{\;0}(\mathbb{Z})\right)^n$ $\big(\hbox{resp., }\big(\mathscr{O}\!_{\infty}^{\;[0]}(\mathbb{Z})\big)^n\big)$.
\end{proposition}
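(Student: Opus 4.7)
The plan is to leverage the isomorphism $\mathbf{I}\colon\mathscr{I\!O}\!_{\infty}(\mathbb{Z}^n_{\operatorname{lex}})\to\prod_{i=1}^{n}S_i$ from Proposition~\ref{proposition-2.3}$(iv)$ together with the isomorphisms $\mathbf{F}_i\colon S_i\to\mathscr{I\!O}\!_{\infty}(\mathbb{Z})$ from Proposition~\ref{proposition-2.3}$(i)$, and argue that these restrict to isomorphisms between the corresponding distinguished subsemigroups. Since an isomorphism of semigroups sends subsemigroups bijectively onto their images, it suffices to identify the image of $\mathscr{O}\!_{\infty}^{\;0}(\mathbb{Z}^n_{\operatorname{lex}})$ under the composition $(\mathbf{F}_1\times\cdots\times\mathbf{F}_n)\circ\mathbf{I}$.

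First I would unravel the domains. By the definition in the proof of Proposition~\ref{proposition-2.3}$(iv)$, for $\alpha\in\mathscr{I\!O}\!_{\infty}(\mathbb{Z}^n_{\operatorname{lex}})$ and each $i=1,\ldots,n$ the $i$th coordinate $\alpha_i\in S_i$ satisfies
\begin{equation*}
    \operatorname{dom}\alpha_i=\bigl(\operatorname{dom}\alpha\cap(\{i\}\times\mathbb{Z})\bigr)\cup\bigl((L_n\setminus\{i\})\times\mathbb{Z}\bigr).
\end{equation*}
Hence $\operatorname{dom}\alpha_i=L_n\times\mathbb{Z}$ if and only if $\operatorname{dom}\alpha\cap(\{i\}\times\mathbb{Z})=\{i\}\times\mathbb{Z}$. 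Taking the conjunction over $i$, $\alpha\in\mathscr{O}\!_{\infty}^{\;0}(\mathbb{Z}^n_{\operatorname{lex}})$ if and only if every $\alpha_i$ has full domain $L_n\times\mathbb{Z}$. Under $\mathbf{F}_i$ the latter condition translates exactly into $(\alpha_i)\mathbf{F}_i\in\mathscr{O}\!_{\infty}^{\;0}(\mathbb{Z})$, because $(\alpha_i)\mathbf{F}_i=\alpha_i|_{\{i\}\times\mathbb{Z}}$ and $\alpha_i$ acts as the identity on the complementary coordinates by construction. Therefore $\mathbf{I}$ restricts to a bijection
\begin{equation*}
    \mathscr{O}\!_{\infty}^{\;0}(\mathbb{Z}^n_{\operatorname{lex}})\longrightarrow\prod_{i=1}^{n}\bigl(\mathbf{F}_i\bigr)^{-1}\!\bigl(\mathscr{O}\!_{\infty}^{\;0}(\mathbb{Z})\bigr),
\end{equation*}
which composed with $\mathbf{F}_1\times\cdots\times\mathbf{F}_n$ yields the required isomorphism onto $\bigl(\mathscr{O}\!_{\infty}^{\;0}(\mathbb{Z})\bigr)^n$. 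The verification that this restriction is a semigroup homomorphism is automatic since it is the restriction of a known isomorphism.

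The argument for $\mathscr{O}\!_{\infty}^{\;[0]}(\mathbb{Z}^n_{\operatorname{lex}})$ is entirely dual: replace domains by ranges throughout. One could alternatively invoke Proposition~\ref{proposition-3.6} and apply the already established isomorphism for $\mathscr{O}\!_{\infty}^{\;0}$ to deduce the statement for $\mathscr{O}\!_{\infty}^{\;[0]}$. I do not anticipate any real obstacle; the only point requiring a moment's care is the bookkeeping about how the extension by identity on $(L_n\setminus\{i\})\times\mathbb{Z}$ interacts with the cofiniteness of the domain, but this is handled by Lemma~\ref{lemma-2.1} which guarantees that each $\alpha\in\mathscr{I\!O}\!_{\infty}(\mathbb{Z}^n_{\operatorname{lex}})$ preserves the coordinate $i$, so the coordinatewise decomposition is well defined.
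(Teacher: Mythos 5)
Your argument is correct and is essentially the paper's own: the paper simply states that Proposition~\ref{proposition-2.3}$(iv)$ implies the result, and your proof just spells out the domain/range bookkeeping under the coordinatewise isomorphism that the paper leaves implicit.
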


Also we observe that by Proposition~\ref{proposition-2.6} the groups of units of the semigroups $\mathscr{O}\!_{\infty}^{\;0}(\mathbb{Z}^n_{\operatorname{lex}})$ and $\mathscr{O}\!_{\infty}^{\;[0]}(\mathbb{Z}^n_{\operatorname{lex}})$ are isomorphic to the direct power $\big(\mathbb{Z}(+)\big)^n$.

Propositions~\ref{proposition-2.3}$(iv)$ and~\ref{proposition-3.2} imply the following:

\begin{proposition}\label{proposition-3.8}
The subsemigroups $\mathscr{O}\!_{\infty}^{\;0}(\mathbb{Z}^n_{\operatorname{lex}})$ and $\mathscr{O}\!_{\infty}^{\;[0]}(\mathbb{Z}^n_{\operatorname{lex}})$ (as a subset) generate the inverse semigroup $\mathscr{I\!O}\!_{\infty}(\mathbb{Z}^n_{\operatorname{lex}})$, and moreover $\mathscr{O}\!_{\infty}^{\;[0]}(\mathbb{Z}^n_{\operatorname{lex}})\cdot \mathscr{O}\!_{\infty}^{\;0}(\mathbb{Z}^n_{\operatorname{lex}})= \mathscr{I\!O}\!_{\infty}(\mathbb{Z}^n_{\operatorname{lex}})$.
\end{proposition}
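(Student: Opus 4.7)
The proof proposal goes through the coordinate decomposition supplied by Proposition~\ref{proposition-2.3}$(iv)$ and reduces everything to the one-factor assertion in Proposition~\ref{proposition-3.2}.

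The plan is as follows. I would first observe that Proposition~\ref{proposition-3.7} identifies $\mathscr{O}\!_{\infty}^{\;0}(\mathbb{Z}^n_{\operatorname{lex}})$ with $\left(\mathscr{O}\!_{\infty}^{\;0}(\mathbb{Z})\right)^n$ and $\mathscr{O}\!_{\infty}^{\;[0]}(\mathbb{Z}^n_{\operatorname{lex}})$ with $\left(\mathscr{O}\!_{\infty}^{\;[0]}(\mathbb{Z})\right)^n$, and that these identifications are compatible with the isomorphism $\mathscr{I\!O}\!_{\infty}(\mathbb{Z}^n_{\operatorname{lex}})\cong\left(\mathscr{I\!O}\!_{\infty}(\mathbb{Z})\right)^n$ of Proposition~\ref{proposition-2.3}$(iv)$. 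Hence every $\alpha\in\mathscr{I\!O}\!_{\infty}(\mathbb{Z}^n_{\operatorname{lex}})$ can be written in the form $\alpha=(\alpha_1,\ldots,\alpha_n)$ with $\alpha_i\in\mathscr{I\!O}\!_{\infty}(\mathbb{Z})$ for each $i=1,\ldots,n$.

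Next, I would apply Proposition~\ref{proposition-3.2} coordinatewise: for each $i$ there exist $\beta_i\in\mathscr{O}\!_{\infty}^{\;[0]}(\mathbb{Z})$ and $\gamma_i\in\mathscr{O}\!_{\infty}^{\;0}(\mathbb{Z})$ with $\alpha_i=\beta_i\gamma_i$. Assembling the tuples
\begin{equation*}
    \beta=(\beta_1,\ldots,\beta_n)\in\mathscr{O}\!_{\infty}^{\;[0]}(\mathbb{Z}^n_{\operatorname{lex}})
    \quad\text{and}\quad
    \gamma=(\gamma_1,\ldots,\gamma_n)\in\mathscr{O}\!_{\infty}^{\;0}(\mathbb{Z}^n_{\operatorname{lex}}),
\end{equation*}
and using that multiplication in the direct product is coordinatewise, I get $\alpha=\beta\gamma$. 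This establishes the equality $\mathscr{O}\!_{\infty}^{\;[0]}(\mathbb{Z}^n_{\operatorname{lex}})\cdot \mathscr{O}\!_{\infty}^{\;0}(\mathbb{Z}^n_{\operatorname{lex}})=\mathscr{I\!O}\!_{\infty}(\mathbb{Z}^n_{\operatorname{lex}})$; the reverse inclusion is trivial since both factors are subsemigroups of $\mathscr{I\!O}\!_{\infty}(\mathbb{Z}^n_{\operatorname{lex}})$.

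Finally, the generation claim is immediate from this factorisation: the inverse subsemigroup generated by $\mathscr{O}\!_{\infty}^{\;0}(\mathbb{Z}^n_{\operatorname{lex}})\cup \mathscr{O}\!_{\infty}^{\;[0]}(\mathbb{Z}^n_{\operatorname{lex}})$ contains every product $\beta\gamma$ above, hence contains all of $\mathscr{I\!O}\!_{\infty}(\mathbb{Z}^n_{\operatorname{lex}})$.

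There is essentially no obstacle here beyond verifying that the isomorphism of Proposition~\ref{proposition-2.3}$(iv)$ restricts correctly to the subsemigroups of Proposition~\ref{proposition-3.7}; this is a routine check, since $\operatorname{dom}\alpha=L_n\times\mathbb{Z}$ is equivalent to $\operatorname{dom}\alpha_i=\{i\}\times\mathbb{Z}$ for all $i$, and analogously for the range. The only mild subtlety is that in applying Proposition~\ref{proposition-3.2} coordinate by coordinate one must check that an arbitrary $\alpha_i$ (not just those in $S_i$) admits the factorisation — but this is literally the content of Proposition~\ref{proposition-3.2}.
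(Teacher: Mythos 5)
Your argument is correct and follows exactly the route the paper intends: the paper derives this proposition directly from Proposition~\ref{proposition-2.3}$(iv)$ and Proposition~\ref{proposition-3.2}, which is precisely the coordinatewise reduction you carry out (with Proposition~\ref{proposition-3.7} making the identification of the subsemigroups with direct powers explicit). You have merely written out the routine verifications that the paper leaves implicit.
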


For an arbitrary positive integer $n$ and any integers $k$ and $j$ such that $j=1,\ldots,n$, we define the maps $\varepsilon_{k[j]}\colon L_n\times\mathbb{Z}\rightarrow L_n\times\mathbb{Z}$ and $\varsigma_{k[j]}\colon L_n\times\mathbb{Z}\rightarrow L_n\times\mathbb{Z}$ by the formulae
\begin{equation*}
    (m,i)\varepsilon_{k[j]}=
    \left\{
      \begin{array}{cl}
        (m,i+1), & \hbox{if~} m=j \hbox{~and~} i>k;\\
        (m,i), & \hbox{if~} m=j \hbox{~and~} i\leqslant k;\\
        (m,i), & \hbox{if~} m\neq j
      \end{array}
    \right.
\qquad \hbox{and} \qquad (m,i)\varsigma_{k[j]}=
    \left\{
      \begin{array}{cl}
        (m,i+k), & \hbox{if~} m=j;\\
        (m,i), & \hbox{if~} m\neq j.
      \end{array}
    \right.
\end{equation*}
Obviously that $\varepsilon_{k[j]}\in \mathscr{O}\!_{\infty}^{\;0}(\mathbb{Z}^n_{\operatorname{lex}})$, $\varepsilon_{k[j]}^{-1}\in \mathscr{O}\!_{\infty}^{\;[0]}(\mathbb{Z}^n_{\operatorname{lex}})$ and $\varsigma_{k[j]}$ is an element of the group of units of $\mathscr{I\!O}\!_{\infty}(\mathbb{Z}^n_{\operatorname{lex}})$, for any $k\in\mathbb{Z}$ and $j=1,\ldots,n$.

Theorem~\ref{theorem-3.4}, Propositions~\ref{proposition-3.7} and~\ref{proposition-3.8} imply the following theorem.

\begin{theorem}\label{theorem-3.9}
For every positive integer $n$ and any $n$-ordered collection of integers $(k_1,\ldots,k_n)$ the set  $\{\varepsilon_{k_1[1]},\ldots,\varepsilon_{k_n[n]}, \varsigma_{1[1]},\ldots,\varsigma_{1[n]}\}$ generates the semigroup $\mathscr{I\!O}\!_{\infty}(\mathbb{Z}^n_{\operatorname{lex}})$ as an inverse semigroup and hence $\mathscr{I\!O}\!_{\infty}(\mathbb{Z}^n_{\operatorname{lex}})$ is finitely generated.
\end{theorem}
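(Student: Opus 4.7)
The plan is to reduce the $n$-dimensional statement to the one-dimensional Theorem~\ref{theorem-3.4} by way of the direct product decomposition established in Section~\ref{section-2}. The two key ingredients are Proposition~\ref{proposition-2.3}$(iv)$ together with Remark~\ref{remark-2.10}, which assert that every $\alpha\in\mathscr{I\!O}\!_{\infty}(\mathbb{Z}^n_{\operatorname{lex}})$ factors as $\alpha=\alpha_1^\circ\cdots\alpha_n^\circ$ with $\alpha_j^\circ\in S_j$, and the isomorphism $\mathbf{F}_j\colon S_j\to\mathscr{I\!O}\!_{\infty}(\mathbb{Z})$ of Proposition~\ref{proposition-2.3}$(i)$.

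First I would observe that for each fixed $j\in\{1,\ldots,n\}$, the maps $\varepsilon_{k_j[j]}$ and $\varsigma_{1[j]}$ lie in $S_j$, since by definition they act as the identity on $(L_n\setminus\{j\})\times\mathbb{Z}$. Moreover, their restrictions to $\{j\}\times\mathbb{Z}$ are, under the natural identification with $\mathbb{Z}$, precisely the maps $\varepsilon_{k_j}$ and $\varsigma_1$ of the one-dimensional setting. Hence $(\varepsilon_{k_j[j]})\mathbf{F}_j=\varepsilon_{k_j}$ and $(\varsigma_{1[j]})\mathbf{F}_j=\varsigma_1$. Applying Theorem~\ref{theorem-3.4} and pulling back via the inverse semigroup isomorphism $\mathbf{F}_j^{-1}$ yields that $\{\varepsilon_{k_j[j]},\varsigma_{1[j]}\}$ generates $S_j$ as an inverse semigroup.

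Finally, given an arbitrary $\alpha\in\mathscr{I\!O}\!_{\infty}(\mathbb{Z}^n_{\operatorname{lex}})$, Remark~\ref{remark-2.10} supplies the factorization $\alpha=\alpha_1^\circ\cdots\alpha_n^\circ$ with $\alpha_j^\circ\in S_j$. By the previous step each $\alpha_j^\circ$ belongs to the inverse subsemigroup generated by $\{\varepsilon_{k_j[j]},\varsigma_{1[j]}\}$, and consequently $\alpha$ belongs to the inverse subsemigroup of $\mathscr{I\!O}\!_{\infty}(\mathbb{Z}^n_{\operatorname{lex}})$ generated by the union
\begin{equation*}
\{\varepsilon_{k_1[1]},\ldots,\varepsilon_{k_n[n]},\varsigma_{1[1]},\ldots,\varsigma_{1[n]}\}.
\end{equation*}
Since this generating set has $2n$ elements, $\mathscr{I\!O}\!_{\infty}(\mathbb{Z}^n_{\operatorname{lex}})$ is finitely generated. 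I do not anticipate any real obstacle: the only subtlety is confirming that the $\mathbf{F}_j$ identification sends the chosen generators to the generators of Theorem~\ref{theorem-3.4} (which is a direct comparison of the defining formulae), and that inverses are automatically available because we generate as an \emph{inverse} semigroup.
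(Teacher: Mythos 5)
Your proof is correct. It does, however, glue the coordinates together differently from the paper: the paper's one-line proof cites Theorem~\ref{theorem-3.4} together with Propositions~\ref{proposition-3.7} and~\ref{proposition-3.8}, i.e.\ it factors an arbitrary element through the product $\mathscr{O}\!_{\infty}^{\;[0]}(\mathbb{Z}^n_{\operatorname{lex}})\cdot \mathscr{O}\!_{\infty}^{\;0}(\mathbb{Z}^n_{\operatorname{lex}})= \mathscr{I\!O}\!_{\infty}(\mathbb{Z}^n_{\operatorname{lex}})$ and then uses the direct-power structure of those two subsemigroups of everywhere-defined (resp.\ surjective) maps, whereas you work coordinatewise on the whole semigroup at once, using Proposition~\ref{proposition-2.3}$(i)$,$(iv)$ and the factorization $\alpha=\alpha_1^\circ\cdots\alpha_n^\circ$ of Remark~\ref{remark-2.10}, and pull the one-dimensional generating set of Theorem~\ref{theorem-3.4} back through the isomorphism $\mathbf{F}_j\colon S_j\to\mathscr{I\!O}\!_{\infty}(\mathbb{Z})$. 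Both routes reduce the statement to Theorem~\ref{theorem-3.4}; yours bypasses the $\mathscr{O}\!_{\infty}^{\;0}/\mathscr{O}\!_{\infty}^{\;[0]}$ machinery of Section~\ref{section-3} entirely and is a bit more direct and self-contained, while the paper's choice of citations simply reuses the apparatus it had already built (Propositions~\ref{proposition-3.6}--\ref{proposition-3.8}). Your two auxiliary checks are sound: the generators $\varepsilon_{k_j[j]},\varsigma_{1[j]}$ visibly lie in $S_j$ and correspond under $\mathbf{F}_j$ to $\varepsilon_{k_j},\varsigma_1$, and an isomorphism between inverse semigroups automatically preserves inversion, so it carries inverse-semigroup generating sets to inverse-semigroup generating sets.
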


\begin{remark}\label{remark-3.10}
We observe that for every positive integer $n$ and any $2n$-ordered collection of integers $(k_1,\ldots,k_n,k_{n+1},\ldots,k_{2n})$ the set
\begin{equation*}
\big\{\varepsilon_{k_1[1]},\ldots,\varepsilon_{k_n[n]}, \varepsilon_{k_{n+1}[1]}^{-1},\ldots,\varepsilon_{k_{2n}[n]}^{-1}, \varsigma_{1[1]},\ldots,\varsigma_{1[n]},\varsigma_{-1[1]},\ldots, \varsigma_{-1[n]}\big\}
\end{equation*}
generates the semigroup $\mathscr{I\!O}\!_{\infty}(\mathbb{Z}^n_{\operatorname{lex}})$ as a semigroup in the general case.
\end{remark}

\begin{proposition}\label{proposition-3.11}
Let $\mathfrak{f}\colon \mathscr{I\!O}\!_{\infty}(\mathbb{Z})\rightarrow \mathscr{I\!O}\!_{\infty}(\mathbb{Z})$ be any automorphism of the semigroup $\mathscr{I\!O}\!_{\infty}(\mathbb{Z})$. Then $\big(\mathscr{O}\!_{\infty}^{\;0}(\mathbb{Z})\big)\mathfrak{f}= \mathscr{O}\!_{\infty}^{\;0}(\mathbb{Z})$ and $\big(\mathscr{O}\!_{\infty}^{\;[0]}(\mathbb{Z})\big)\mathfrak{f}= \mathscr{O}\!_{\infty}^{\;[0]}(\mathbb{Z})$, and moreover the restrictions $\mathfrak{f}|_{\mathscr{O}\!_{\infty}^{\;0}(\mathbb{Z})}\colon \mathscr{O}\!_{\infty}^{\;0}(\mathbb{Z})\rightarrow \mathscr{O}\!_{\infty}^{\;0}(\mathbb{Z})$ and $\mathfrak{f}|_{\mathscr{O}\!_{\infty}^{\;[0]}(\mathbb{Z})}\colon \mathscr{O}\!_{\infty}^{\;[0]}(\mathbb{Z})\rightarrow \mathscr{O}\!_{\infty}^{\;[0]}(\mathbb{Z})$ are automorphisms of the semigroups $\mathscr{O}\!_{\infty}^{\;0}(\mathbb{Z})$ and $\mathscr{O}\!_{\infty}^{\;[0]}(\mathbb{Z})$, respectively.
\end{proposition}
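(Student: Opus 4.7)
The plan is to characterize the two distinguished subsemigroups purely algebraically, using only the ambient inverse-semigroup structure of $\mathscr{I\!O}\!_{\infty}(\mathbb{Z})$, and then to observe that any automorphism must preserve such algebraic descriptions. The crucial point is that, in the inverse semigroup $\mathscr{I\!O}\!_{\infty}(\mathbb{Z})$, the idempotent $\alpha^{-1}\alpha$ is the partial identity on $\operatorname{dom}\alpha$ and $\alpha\alpha^{-1}$ is the partial identity on $\operatorname{ran}\alpha$. Consequently,
\begin{equation*}
\mathscr{O}\!_{\infty}^{\;0}(\mathbb{Z})=\{\alpha\in\mathscr{I\!O}\!_{\infty}(\mathbb{Z})\colon \alpha^{-1}\alpha=\mathbb{I}\} \qquad \text{and} \qquad \mathscr{O}\!_{\infty}^{\;[0]}(\mathbb{Z})=\{\alpha\in\mathscr{I\!O}\!_{\infty}(\mathbb{Z})\colon \alpha\alpha^{-1}=\mathbb{I}\}.
\end{equation*}

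Next, I would record two elementary facts about the automorphism $\mathfrak{f}$. First, $\mathfrak{f}$ fixes the identity: if $(\mathbb{I})\mathfrak{f}=\beta$, then for any $\gamma\in\mathscr{I\!O}\!_{\infty}(\mathbb{Z})$, writing $\gamma=(\delta)\mathfrak{f}$ and applying $\mathfrak{f}$ to $\mathbb{I}\cdot\delta=\delta=\delta\cdot\mathbb{I}$ yields $\beta\gamma=\gamma=\gamma\beta$, so by uniqueness of the identity $\beta=\mathbb{I}$. Second, $\mathfrak{f}$ commutes with inversion: since in an inverse semigroup the inverse $\alpha^{-1}$ is uniquely determined by the equations $\alpha x\alpha=\alpha$ and $x\alpha x=x$, applying $\mathfrak{f}$ to these relations with $x=\alpha^{-1}$ shows that $(\alpha^{-1})\mathfrak{f}$ satisfies the analogous relations for $(\alpha)\mathfrak{f}$, whence $(\alpha^{-1})\mathfrak{f}=((\alpha)\mathfrak{f})^{-1}$.

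Combining these two facts with the algebraic characterizations above is immediate: if $\alpha\in\mathscr{O}\!_{\infty}^{\;0}(\mathbb{Z})$, then
\begin{equation*}
((\alpha)\mathfrak{f})^{-1}\cdot(\alpha)\mathfrak{f}=(\alpha^{-1})\mathfrak{f}\cdot(\alpha)\mathfrak{f}= (\alpha^{-1}\alpha)\mathfrak{f}=(\mathbb{I})\mathfrak{f}=\mathbb{I},
\end{equation*}
so $(\alpha)\mathfrak{f}\in\mathscr{O}\!_{\infty}^{\;0}(\mathbb{Z})$; the analogous computation with $\alpha\alpha^{-1}$ shows $(\mathscr{O}\!_{\infty}^{\;[0]}(\mathbb{Z}))\mathfrak{f}\subseteq \mathscr{O}\!_{\infty}^{\;[0]}(\mathbb{Z})$. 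Applying the same argument to the inverse automorphism $\mathfrak{f}^{-1}$ gives the reverse inclusions, so both sets are $\mathfrak{f}$-invariant.

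Finally, the restriction $\mathfrak{f}|_{\mathscr{O}\!_{\infty}^{\;0}(\mathbb{Z})}\colon \mathscr{O}\!_{\infty}^{\;0}(\mathbb{Z})\to \mathscr{O}\!_{\infty}^{\;0}(\mathbb{Z})$ is a semigroup homomorphism inherited from $\mathfrak{f}$, injective because $\mathfrak{f}$ is, and surjective by the invariance established for $\mathfrak{f}^{-1}$; hence it is an automorphism, and similarly for $\mathfrak{f}|_{\mathscr{O}\!_{\infty}^{\;[0]}(\mathbb{Z})}$. No single step is genuinely delicate; the only point requiring any care is to verify that the inversion is algebraically determined (so that $\mathfrak{f}$ commutes with it), and this is a standard property of inverse semigroups that I would quote rather than re-prove.
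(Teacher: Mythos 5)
Your proof is correct and follows essentially the same route as the paper: both rest on applying $\mathfrak{f}$ to the relation expressing that a full-domain element composed with its inverse equals $\mathbb{I}$, and then invoking $\mathfrak{f}^{-1}$ for the reverse inclusions (note only that the paper composes maps on the right, so for it $\alpha\alpha^{-1}$ is the identity on $\operatorname{dom}\alpha$ and the roles of your two idempotent characterizations are interchanged, which is harmless since you treat both sets symmetrically). Your explicit verification that $\mathfrak{f}$ fixes $\mathbb{I}$ and commutes with inversion merely makes precise steps the paper leaves implicit.
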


\begin{proof}
Fix an arbitrary element $\alpha\in\mathscr{O}\!_{\infty}^{\;0}(\mathbb{Z})$. Then we have that $\alpha^{-1}\in\mathscr{O}\!_{\infty}^{\;[0]}(\mathbb{Z})$ and hence $\alpha\alpha^{-1}=\mathbb{I}$ is the unity element of the semigroup $\mathscr{I\!O}\!_{\infty}(\mathbb{Z})$. Suppose to the contrary that there exists $\alpha\in\mathscr{O}\!_{\infty}^{\;0}(\mathbb{Z})$ such that $(\alpha)\mathfrak{f}\notin\mathscr{O}\!_{\infty}^{\;0}(\mathbb{Z})$. Then we have that
\begin{equation*}
    \mathbb{I}=(\mathbb{I})\mathfrak{f}=(\alpha\alpha^{-1})\mathfrak{f}= (\alpha)\mathfrak{f}(\alpha^{-1})\mathfrak{f}.
\end{equation*}
The last formula implies that $\operatorname{dom}\mathbb{I}\neq\mathbb{Z}$ because $(\alpha)\mathfrak{f}\notin\mathscr{O}\!_{\infty}^{\;0}(\mathbb{Z})$, a contradiction. The obtained contradiction implies that $(\alpha)\mathfrak{f}\in\mathscr{O}\!_{\infty}^{\;0}(\mathbb{Z})$ and hence $\big(\mathscr{O}\!_{\infty}^{\;0}(\mathbb{Z})\big)\mathfrak{f}\subseteq  \mathscr{O}\!_{\infty}^{\;0}(\mathbb{Z})$. Suppose there exists $\beta\in \mathscr{I\!O}\!_{\infty}(\mathbb{Z})\setminus \mathscr{O}\!_{\infty}^{\;0}(\mathbb{Z})$ such that $(\beta)\mathfrak{f}\in\mathscr{O}\!_{\infty}^{\;0}(\mathbb{Z})$. Then the inverse map $\mathfrak{f}^{-1}$ of $\mathfrak{f}$ is an automorphism of the semigroup $\mathscr{I\!O}\!_{\infty}(\mathbb{Z})$, and by previous arguments we get that $\mathscr{O}\!_{\infty}^{\;0}(\mathbb{Z})\not\ni\beta= (\beta)\mathfrak{f}\mathfrak{f}^{-1}\in \mathscr{O}\!_{\infty}^{\;0}(\mathbb{Z})$, a contradiction. Thus, the equality $\big(\mathscr{O}\!_{\infty}^{\;0}(\mathbb{Z})\big)\mathfrak{f}= \mathscr{O}\!_{\infty}^{\;0}(\mathbb{Z})$ holds.

The proof of the equality $\big(\mathscr{O}\!_{\infty}^{\;[0]}(\mathbb{Z})\big)\mathfrak{f}= \mathscr{O}\!_{\infty}^{\;[0]}(\mathbb{Z})$ is similar. The last assertion follows from the first part of the proof.
\end{proof}

Since by Proposition~\ref{proposition-3.3} the elements $\varepsilon_0,\varsigma_1\in \mathscr{O}\!_{\infty}^{\;0}(\mathbb{Z})$ (resp., $\varepsilon_0^{-1},\varsigma_1\in \mathscr{O}\!_{\infty}^{\;[0]}(\mathbb{Z})$) generate $\mathscr{I\!O}\!_{\infty}(\mathbb{Z})$ as an inverse semigroup, the following proposition holds.

\begin{proposition}\label{proposition-3.12}
For an arbitrary automorphism $\mathfrak{f}\colon \mathscr{O}\!_{\infty}^{\;0}(\mathbb{Z})\rightarrow \mathscr{O}\!_{\infty}^{\;0}(\mathbb{Z})$ (resp., $\mathfrak{f}\colon \mathscr{O}\!_{\infty}^{\;[0]}(\mathbb{Z})\rightarrow \mathscr{O}\!_{\infty}^{\;[0]}(\mathbb{Z})$) of the semigroup $\mathscr{O}\!_{\infty}^{\;0}(\mathbb{Z})$ (resp., $\mathscr{O}\!_{\infty}^{\;[0]}(\mathbb{Z})$) there exists a unique automorphism
$\overline{\mathfrak{f}}\colon \mathscr{I\!O}\!_{\infty}(\mathbb{Z})\rightarrow \mathscr{I\!O}\!_{\infty}(\mathbb{Z})$ of the semigroup $\mathscr{I\!O}\!_{\infty}(\mathbb{Z})$ such that
$\overline{\mathfrak{f}}|_{\mathscr{O}\!_{\infty}^{\;0}(\mathbb{Z})}=\mathfrak{f}$ (resp., $\overline{\mathfrak{f}}|_{\mathscr{O}\!_{\infty}^{\;[0]}(\mathbb{Z})}= \mathfrak{f}$).
\end{proposition}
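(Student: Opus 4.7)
The plan is to handle the automorphism $\mathfrak{f}$ of $\mathscr{O}\!_{\infty}^{\;0}(\mathbb{Z})$; the case of $\mathscr{O}\!_{\infty}^{\;[0]}(\mathbb{Z})$ is analogous via the antiisomorphism of Proposition~\ref{proposition-3.1}. \emph{Uniqueness} is immediate from Proposition~\ref{proposition-3.3}: both generators $\varepsilon_0,\varsigma_1$ of $\mathscr{I\!O}\!_{\infty}(\mathbb{Z})$ (as an inverse semigroup) lie in $\mathscr{O}\!_{\infty}^{\;0}(\mathbb{Z})$, so any two extensions of $\mathfrak{f}$ must agree on them and therefore coincide on the whole of $\mathscr{I\!O}\!_{\infty}(\mathbb{Z})$.

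For \emph{existence}, I would first transport $\mathfrak{f}$ across Proposition~\ref{proposition-3.1} to obtain an automorphism $\tilde{\mathfrak{f}}\colon\mathscr{O}\!_{\infty}^{\;[0]}(\mathbb{Z})\to\mathscr{O}\!_{\infty}^{\;[0]}(\mathbb{Z})$ given by $(\beta)\tilde{\mathfrak{f}}=((\beta^{-1})\mathfrak{f})^{-1}$, and observe that $\mathfrak{f}$ and $\tilde{\mathfrak{f}}$ agree on $H(\mathbb{I})=\mathscr{O}\!_{\infty}^{\;0}\cap\mathscr{O}\!_{\infty}^{\;[0]}$, since $\mathfrak{f}$ preserves the group of units. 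For each $\gamma\in\mathscr{I\!O}\!_{\infty}(\mathbb{Z})$, pick a decomposition $\gamma=\beta\alpha$ supplied by Proposition~\ref{proposition-3.2}, with $\beta\in\mathscr{O}\!_{\infty}^{\;[0]}$ and $\alpha\in\mathscr{O}\!_{\infty}^{\;0}$, and set $(\gamma)\overline{\mathfrak{f}}=(\beta)\tilde{\mathfrak{f}}\cdot(\alpha)\mathfrak{f}$. To verify well-definedness, note that if $\beta_1\alpha_1=\beta_2\alpha_2$, then right-multiplying by $\alpha_1^{-1}$ and left-multiplying by $\beta_2^{-1}$ (using $\alpha_1\alpha_1^{-1}=\mathbb{I}$ and $\beta_2^{-1}\beta_2=\mathbb{I}$ since $\alpha_1,\beta_2^{-1}\in\mathscr{O}\!_{\infty}^{\;0}$ and $\beta_2,\alpha_1^{-1}\in\mathscr{O}\!_{\infty}^{\;[0]}$) yields $\beta_2=\beta_1\tau^{-1}$ and $\alpha_2=\tau\alpha_1$, where $\tau=\alpha_2\alpha_1^{-1}$ is a total bijection of $\mathbb{Z}$ and hence lies in $H(\mathbb{I})$. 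The two candidate values for $(\gamma)\overline{\mathfrak{f}}$ then coincide after invoking the homomorphism property of $\mathfrak{f}$ and $\tilde{\mathfrak{f}}$ and cancelling $(\mathfrak{f}(\tau))^{-1}\mathfrak{f}(\tau)=\mathbb{I}$.

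The main obstacle is verifying that $\overline{\mathfrak{f}}$ is a homomorphism. For $\gamma_i=\beta_i\alpha_i$ and a Proposition~\ref{proposition-3.2} decomposition $\alpha_1\beta_2=\beta'\alpha'$, one obtains $\gamma_1\gamma_2=(\beta_1\beta')(\alpha'\alpha_2)$, and expanding both sides of $(\gamma_1\gamma_2)\overline{\mathfrak{f}}=(\gamma_1)\overline{\mathfrak{f}}\cdot(\gamma_2)\overline{\mathfrak{f}}$ using that $\mathfrak{f}$ and $\tilde{\mathfrak{f}}$ are each homomorphisms on their own subsemigroups reduces the task to the single commutation identity
\[
(\beta')\tilde{\mathfrak{f}}\cdot(\alpha')\mathfrak{f}=(\alpha_1)\mathfrak{f}\cdot(\beta_2)\tilde{\mathfrak{f}}\qquad\text{whenever}\qquad\beta'\alpha'=\alpha_1\beta_2\text{ in }\mathscr{I\!O}\!_{\infty}(\mathbb{Z}).
\]
This identity, which asserts compatibility between the actions of $\mathfrak{f}$ and $\tilde{\mathfrak{f}}$ on ``mixed'' products crossing the two subsemigroups, is the hard step. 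My plan is to reduce both sides to words in $(\varepsilon_0)\mathfrak{f}$, $(\varsigma_1)\mathfrak{f}$ and their inverses via Proposition~\ref{proposition-3.3}, and then use that any defining relation of $\mathscr{I\!O}\!_{\infty}(\mathbb{Z})$ which exchanges an $\mathscr{O}\!_{\infty}^{\;0}$-letter with an $\mathscr{O}\!_{\infty}^{\;[0]}$-letter is the $\mathfrak{i}$-image of a relation internal to $\mathscr{O}\!_{\infty}^{\;0}$, which $\mathfrak{f}$ already preserves. Once $\overline{\mathfrak{f}}$ is known to be an endomorphism, bijectivity follows automatically by applying the same construction to $\mathfrak{f}^{-1}$ and checking that the two composites $\overline{\mathfrak{f}}\cdot\overline{\mathfrak{f}^{-1}}$ and $\overline{\mathfrak{f}^{-1}}\cdot\overline{\mathfrak{f}}$ act as the identity on every factor of any chosen decomposition.
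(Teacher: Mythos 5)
Your uniqueness argument is fine and is exactly the paper's point: the paper states this proposition as an immediate consequence of Proposition~\ref{proposition-3.3}, since the generators $\varepsilon_0,\varsigma_1$ (resp. $\varepsilon_0^{-1},\varsigma_1$) lie in $\mathscr{O}\!_{\infty}^{\;0}(\mathbb{Z})$ (resp. $\mathscr{O}\!_{\infty}^{\;[0]}(\mathbb{Z})$) and automorphisms of an inverse semigroup commute with inversion. Your well-definedness check for the map $\gamma=\beta\alpha\mapsto(\beta)\tilde{\mathfrak{f}}\cdot(\alpha)\mathfrak{f}$ is also essentially sound (the unit $\tau=\alpha_2\alpha_1^{-1}$ is total with full range because $\operatorname{ran}\alpha_1=\operatorname{ran}\gamma=\operatorname{ran}\alpha_2$). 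The genuine gap is in the existence half, at precisely the point you flag: the ``commutation identity'' $(\beta')\tilde{\mathfrak{f}}\cdot(\alpha')\mathfrak{f}=(\alpha_1)\mathfrak{f}\cdot(\beta_2)\tilde{\mathfrak{f}}$ for $\beta'\alpha'=\alpha_1\beta_2$ is not proved but only ``planned''. The plan presupposes a presentation of $\mathscr{I\!O}\!_{\infty}(\mathbb{Z})$ by generators and defining relations, which neither you nor the paper establishes, and the key claim that every ``mixed'' relation is the $\mathfrak{i}$-image of a relation internal to $\mathscr{O}\!_{\infty}^{\;0}(\mathbb{Z})$ is asserted without justification; note also that one cannot freely transport such a relation by $\mathfrak{f}$, since $\mathfrak{f}$ is only defined on $\mathscr{O}\!_{\infty}^{\;0}(\mathbb{Z})$ while the relation $\alpha'=\beta'^{-1}\alpha_1\beta_2$ involves the factor $\beta_2\notin\mathscr{O}\!_{\infty}^{\;0}(\mathbb{Z})$. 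As it stands, existence is not established.

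There is a much shorter route, consistent with how the paper proceeds: by Theorem~9 of \cite{Doroshenko2009} (quoted immediately after the proposition) every automorphism of $\mathscr{O}\!_{\infty}^{\;0}(\mathbb{Z})$ is inner, i.e. $(\alpha)\mathfrak{f}=\varsigma_k\alpha\varsigma_k^{-1}$ for some integer $k$, where $\varsigma_k$ is a unit of the whole monoid $\mathscr{I\!O}\!_{\infty}(\mathbb{Z})$; hence $\gamma\mapsto\varsigma_k\gamma\varsigma_k^{-1}$ is an automorphism of $\mathscr{I\!O}\!_{\infty}(\mathbb{Z})$ extending $\mathfrak{f}$, and uniqueness follows from the generation argument you already gave. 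The case of $\mathscr{O}\!_{\infty}^{\;[0]}(\mathbb{Z})$ is handled the same way via Corollary~\ref{corollary-3.13} (or your antiisomorphism reduction). This bypasses the well-definedness and homomorphism verifications entirely; if you want a proof independent of the inner-automorphism theorem, you must actually prove the commutation identity, which is the one step your proposal leaves open.
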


By Theorem~9 from \cite{Doroshenko2009} every automorphism of the semigroup $\mathscr{O}\!_{\infty}^{\;0}(\mathbb{Z})$ is inner and moreover the group of automorphisms of $\mathscr{O}\!_{\infty}^{\;0}(\mathbb{Z})$ is isomorphic to the additive group of integers $\mathbb{Z}(+)$. Then Proposition~\ref{proposition-3.1} implies the following corollary.

\begin{corollary}\label{corollary-3.13}
Every automorphism of the semigroup $\mathscr{O}\!_{\infty}^{\;[0]}(\mathbb{Z})$ is inner and moreover the group of automorphisms of $\mathscr{O}\!_{\infty}^{\;[0]}(\mathbb{Z})$ is isomorphic to the additive group of integers $\mathbb{Z}(+)$.
\end{corollary}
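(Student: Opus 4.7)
The plan is to transfer Doroshenko's result for $\mathscr{O}\!_{\infty}^{\;0}(\mathbb{Z})$ (Theorem~9 of~\cite{Doroshenko2009}, quoted just before the corollary) to $\mathscr{O}\!_{\infty}^{\;[0]}(\mathbb{Z})$ via conjugation by the antiisomorphism $\mathfrak{i}\colon \alpha\mapsto\alpha^{-1}$ of Proposition~\ref{proposition-3.1}. For any automorphism $\mathfrak{g}$ of $\mathscr{O}\!_{\infty}^{\;[0]}(\mathbb{Z})$ I would consider
\begin{equation*}
\Psi(\mathfrak{g})=\mathfrak{i}^{-1}\circ\mathfrak{g}\circ\mathfrak{i}\colon \mathscr{O}\!_{\infty}^{\;0}(\mathbb{Z})\to \mathscr{O}\!_{\infty}^{\;0}(\mathbb{Z}).
\end{equation*}
A short calculation, exploiting that both $\mathfrak{i}$ and $\mathfrak{i}^{-1}$ reverse products while $\mathfrak{g}$ preserves them, shows that the two reversals cancel and $\Psi(\mathfrak{g})$ is a bona fide semigroup isomorphism, hence an automorphism of $\mathscr{O}\!_{\infty}^{\;0}(\mathbb{Z})$. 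Swapping the roles of $\mathfrak{i}$ and $\mathfrak{i}^{-1}$ yields the inverse assignment, and a direct check gives $\Psi(\mathfrak{g}_1\circ\mathfrak{g}_2)=\Psi(\mathfrak{g}_1)\circ\Psi(\mathfrak{g}_2)$, so $\Psi$ is a group isomorphism between $\operatorname{Aut}\bigl(\mathscr{O}\!_{\infty}^{\;[0]}(\mathbb{Z})\bigr)$ and $\operatorname{Aut}\bigl(\mathscr{O}\!_{\infty}^{\;0}(\mathbb{Z})\bigr)$.

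By the invoked Theorem~9 of~\cite{Doroshenko2009}, $\Psi(\mathfrak{g})$ is inner, so there is an element $a$ in the group of units of $\mathscr{O}\!_{\infty}^{\;0}(\mathbb{Z})$ with $\Psi(\mathfrak{g})(\alpha)=a\alpha a^{-1}$ for every $\alpha\in \mathscr{O}\!_{\infty}^{\;0}(\mathbb{Z})$. Since translations of $\mathbb{Z}$ belong to both $\mathscr{O}\!_{\infty}^{\;0}(\mathbb{Z})$ and $\mathscr{O}\!_{\infty}^{\;[0]}(\mathbb{Z})$, the groups of units of these two subsemigroups coincide (both equal the group of units of $\mathscr{I\!O}\!_{\infty}(\mathbb{Z})$, isomorphic to $\mathbb{Z}(+)$ by Proposition~\ref{proposition-2.6}). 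Applying $\mathfrak{i}$ to the identity displayed above and substituting $\beta=\mathfrak{i}(\alpha)=\alpha^{-1}$, I would obtain
\begin{equation*}
\mathfrak{g}(\beta)=\mathfrak{i}\bigl(a\beta^{-1}a^{-1}\bigr)= \bigl(a\beta^{-1}a^{-1}\bigr)^{-1}=a\beta a^{-1}
\qquad \text{for all } \beta\in \mathscr{O}\!_{\infty}^{\;[0]}(\mathbb{Z}),
\end{equation*}
which exhibits $\mathfrak{g}$ as an inner automorphism of $\mathscr{O}\!_{\infty}^{\;[0]}(\mathbb{Z})$. Combining this with the group isomorphism $\Psi$ then yields $\operatorname{Aut}\bigl(\mathscr{O}\!_{\infty}^{\;[0]}(\mathbb{Z})\bigr)\cong \operatorname{Aut}\bigl(\mathscr{O}\!_{\infty}^{\;0}(\mathbb{Z})\bigr)\cong \mathbb{Z}(+)$.

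The argument is essentially bookkeeping once the right map $\Psi$ is in hand; the only delicate point is verifying that composing with an antiisomorphism on both sides produces an automorphism rather than an antiautomorphism of the target (the two sign flips must cancel), so that Doroshenko's inner-automorphism formula carries over with the \emph{same} conjugating element $a$ on the \emph{same} side. Once that is checked the conclusion is immediate.
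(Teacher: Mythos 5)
Your proposal is correct and follows the same route as the paper, which derives the corollary from Doroshenko's Theorem~9 together with the antiisomorphism $\mathfrak{i}\colon\alpha\mapsto\alpha^{-1}$ of Proposition~\ref{proposition-3.1}; you have simply written out the transfer $\mathfrak{g}\mapsto\mathfrak{i}^{-1}\circ\mathfrak{g}\circ\mathfrak{i}$ that the paper leaves implicit. The details you check (the two reversals cancelling, the conjugating unit being a translation and hence lying in both subsemigroups) are exactly the right ones and are verified correctly.
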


\begin{theorem}\label{theorem-3.14}
Every automorphism of the semigroup $\mathscr{I\!O}\!_{\infty}(\mathbb{Z})$ is inner and moreover the group of automorphisms of $\mathscr{I\!O}\!_{\infty}(\mathbb{Z})$ is isomorphic to the additive group of integers $\mathbb{Z}(+)$.
\end{theorem}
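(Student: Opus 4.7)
The plan is to reduce the theorem to Doroshenko's classification of automorphisms of $\mathscr{O}\!_{\infty}^{\;0}(\mathbb{Z})$ (Theorem~9 of \cite{Doroshenko2009}) by using the two compatibility results immediately preceding it: Proposition~\ref{proposition-3.11}, which says an automorphism of $\mathscr{I\!O}\!_{\infty}(\mathbb{Z})$ restricts to an automorphism of $\mathscr{O}\!_{\infty}^{\;0}(\mathbb{Z})$, and Proposition~\ref{proposition-3.12}, which says every automorphism of $\mathscr{O}\!_{\infty}^{\;0}(\mathbb{Z})$ extends uniquely to an automorphism of $\mathscr{I\!O}\!_{\infty}(\mathbb{Z})$.

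First, given an arbitrary automorphism $\mathfrak{f}\colon \mathscr{I\!O}\!_{\infty}(\mathbb{Z})\rightarrow \mathscr{I\!O}\!_{\infty}(\mathbb{Z})$, I would consider its restriction $\mathfrak{f}_0:=\mathfrak{f}|_{\mathscr{O}\!_{\infty}^{\;0}(\mathbb{Z})}$, which by Proposition~\ref{proposition-3.11} is an automorphism of $\mathscr{O}\!_{\infty}^{\;0}(\mathbb{Z})$. By Theorem~9 of \cite{Doroshenko2009} there exists $a$ in the group of units of $\mathscr{O}\!_{\infty}^{\;0}(\mathbb{Z})$ with $(\alpha)\mathfrak{f}_0=a\alpha a^{-1}$ for every $\alpha\in \mathscr{O}\!_{\infty}^{\;0}(\mathbb{Z})$. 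Observe that the group of units of $\mathscr{O}\!_{\infty}^{\;0}(\mathbb{Z})$ coincides with the group of units $H(\mathbb{I})$ of $\mathscr{I\!O}\!_{\infty}(\mathbb{Z})$, so $a\in H(\mathbb{I})$; hence the map $\mathfrak{g}\colon\mathscr{I\!O}\!_{\infty}(\mathbb{Z})\rightarrow \mathscr{I\!O}\!_{\infty}(\mathbb{Z})$ defined by $(s)\mathfrak{g}=asa^{-1}$ is an inner automorphism of the whole semigroup $\mathscr{I\!O}\!_{\infty}(\mathbb{Z})$.

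Second, note that both $\mathfrak{g}$ and $\mathfrak{f}$ are automorphisms of $\mathscr{I\!O}\!_{\infty}(\mathbb{Z})$ whose restrictions to $\mathscr{O}\!_{\infty}^{\;0}(\mathbb{Z})$ coincide with $\mathfrak{f}_0$. The uniqueness clause of Proposition~\ref{proposition-3.12} then forces $\mathfrak{g}=\mathfrak{f}$, which establishes the first assertion: every automorphism of $\mathscr{I\!O}\!_{\infty}(\mathbb{Z})$ is inner.

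For the second assertion, I would form the restriction map
\begin{equation*}
    \Phi\colon \operatorname{Aut}(\mathscr{I\!O}\!_{\infty}(\mathbb{Z}))\rightarrow \operatorname{Aut}(\mathscr{O}\!_{\infty}^{\;0}(\mathbb{Z})),\qquad \mathfrak{f}\mapsto \mathfrak{f}|_{\mathscr{O}\!_{\infty}^{\;0}(\mathbb{Z})}.
\end{equation*}
By Proposition~\ref{proposition-3.11} the map $\Phi$ is well defined, and it is clearly a group homomorphism; Proposition~\ref{proposition-3.12} asserts exactly that $\Phi$ is bijective (surjectivity by the existence of the extension, injectivity by uniqueness). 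Thus $\operatorname{Aut}(\mathscr{I\!O}\!_{\infty}(\mathbb{Z}))\cong \operatorname{Aut}(\mathscr{O}\!_{\infty}^{\;0}(\mathbb{Z}))$, and by Theorem~9 of \cite{Doroshenko2009} the latter is isomorphic to $\mathbb{Z}(+)$, completing the proof.

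No step looks especially hard: essentially all the substance sits in Doroshenko's theorem, together with the two preceding propositions. The only subtlety to be careful about is verifying that the element $a$ implementing the inner automorphism of $\mathscr{O}\!_{\infty}^{\;0}(\mathbb{Z})$ is legitimately a unit of the ambient semigroup $\mathscr{I\!O}\!_{\infty}(\mathbb{Z})$, which is immediate once one notes that $H(\mathbb{I})\subseteq \mathscr{O}\!_{\infty}^{\;0}(\mathbb{Z})$ and that a unit of $\mathscr{O}\!_{\infty}^{\;0}(\mathbb{Z})$ is necessarily a total bijection $\mathbb{Z}\to\mathbb{Z}$, hence a unit of $\mathscr{I\!O}\!_{\infty}(\mathbb{Z})$.
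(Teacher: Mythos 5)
Your proof is correct, and it reaches the conclusion by a slightly different route than the paper. The paper also reduces everything to Theorem~9 of Doroshenko, but it does so by restricting the given automorphism $\mathfrak{f}$ to \emph{both} subsemigroups $\mathscr{O}\!_{\infty}^{\;0}(\mathbb{Z})$ and $\mathscr{O}\!_{\infty}^{\;[0]}(\mathbb{Z})$ (via Proposition~\ref{proposition-3.11} together with Corollary~\ref{corollary-3.13}), obtaining translations $\varsigma_i$ and $\varsigma_j$ implementing the two restrictions, and then proving $i=j$ by a domain computation with a non-unit $\alpha$; only afterwards does it invoke the generation results to conclude that $\mathfrak{f}$ is conjugation by $\varsigma_i$ on all of $\mathscr{I\!O}\!_{\infty}(\mathbb{Z})$. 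You instead use only the restriction to $\mathscr{O}\!_{\infty}^{\;0}(\mathbb{Z})$ and let the uniqueness clause of Proposition~\ref{proposition-3.12} (which ultimately rests on Proposition~\ref{proposition-3.3}: $\{\varepsilon_0,\varsigma_1\}\subseteq\mathscr{O}\!_{\infty}^{\;0}(\mathbb{Z})$ generates $\mathscr{I\!O}\!_{\infty}(\mathbb{Z})$ as an inverse semigroup) force $\mathfrak{f}$ to equal the global inner automorphism $s\mapsto asa^{-1}$; this sidesteps Corollary~\ref{corollary-3.13} and the $i=j$ reconciliation entirely, and your identification $\operatorname{Aut}(\mathscr{I\!O}\!_{\infty}(\mathbb{Z}))\cong\operatorname{Aut}(\mathscr{O}\!_{\infty}^{\;0}(\mathbb{Z}))$ via the restriction map gives the second assertion cleanly. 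The trade-off is that your argument leans on Proposition~\ref{proposition-3.12} as a black box (in particular its existence clause for surjectivity of $\Phi$, which the paper states but does not prove in detail), whereas the paper's proof only needs the generation statements plus Doroshenko's theorem; if you wanted to avoid the existence clause, you could instead deduce $\operatorname{Aut}\cong\mathbb{Z}(+)$ from innerness together with the observation that no translation $\varsigma_k$ with $k\neq 0$ commutes with all idempotents, so distinct units give distinct inner automorphisms. Your verification that the conjugating element $a$ is a unit of the ambient monoid (a unit of $\mathscr{O}\!_{\infty}^{\;0}(\mathbb{Z})$ is a monotone bijection of $\mathbb{Z}$, hence lies in $H(\mathbb{I})$) is exactly the right point to check and is handled adequately.
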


\begin{proof}
Let
$\mathfrak{f}\colon \mathscr{I\!O}\!_{\infty}(\mathbb{Z})\rightarrow \mathscr{I\!O}\!_{\infty}(\mathbb{Z})$ be an arbitrary automorphism of the semigroup $\mathscr{I\!O}\!_{\infty}(\mathbb{Z})$. Then Theorem~9 from \cite{Doroshenko2009} and Corollary~\ref{corollary-3.13} imply there exist integers $i$ and $j$ such that $(\alpha)\mathfrak{f}=\varsigma_i\alpha\varsigma_i^{-1}$ and $(\beta)\mathfrak{f}=\varsigma_j\beta\varsigma_j^{-1}$ for all $\alpha\in \mathscr{O}\!_{\infty}^{\;0}(\mathbb{Z})$ and $\beta\in \mathscr{O}\!_{\infty}^{\;[0]}(\mathbb{Z})$. Next we shall show that $i=j$. Suppose the contrary that $i\neq j$. We fix an arbitrary $\alpha\in \mathscr{O}\!_{\infty}^{\;0}(\mathbb{Z})$ such that $\alpha$ is not an element of the group of units of $\mathscr{O}\!_{\infty}^{\;0}(\mathbb{Z})$. Then $\alpha^{-1}\in \mathscr{O}\!_{\infty}^{\;[0]}(\mathbb{Z})$ and $\alpha\alpha^{-1}=\mathbb{I}$ is unit of the semigroup $\mathscr{I\!O}\!_{\infty}(\mathbb{Z})$. Now, we have that
\begin{equation*}
    \mathbb{I}=(\mathbb{I})\mathfrak{f}=(\alpha\alpha^{-1})\mathfrak{f}= (\alpha)\mathfrak{f}(\alpha^{-1})\mathfrak{f}= \varsigma_i\alpha\varsigma_i^{-1} \varsigma_j\alpha^{-1}\varsigma_j^{-1}.
\end{equation*}
Since $i\neq j$ we get that $\operatorname{dom}(\alpha\varsigma_i^{-1} \varsigma_j\alpha^{-1})\neq \mathbb{Z}$ and hence $\operatorname{dom}\mathbb{I}=\operatorname{dom}(\varsigma_i\alpha\varsigma_i^{-1} \varsigma_j\alpha^{-1}\varsigma_j^{-1})\neq \mathbb{Z}$, a contradiction. The obtained contradiction implies that $i=j$. Now, Theorem~\ref{theorem-3.4}, Theorem~9 of\cite{Doroshenko2009} and Corollary~\ref{corollary-3.13} complete the proof of the theorem.
\end{proof}

The following example implies that for every integer $n\geqslant 2$ the semigroup $\mathscr{I\!O}\!_{\infty}(\mathbb{Z}^n_{\operatorname{lex}})$ has a non-inner automorphism.

\begin{example}\label{example-3.15}
We define the map $\mathfrak{h}\colon \mathscr{I\!O}\!_{\infty}(\mathbb{Z}^2_{\operatorname{lex}})\rightarrow \mathscr{I\!O}\!_{\infty}(\mathbb{Z}^2_{\operatorname{lex}})$ in the following way. By Proposition~\ref{proposition-2.3}$(iv)$ we identify the semigroup $\mathscr{I\!O}\!_{\infty}(\mathbb{Z}^2_{\operatorname{lex}})$ with $\left(\mathscr{I\!O}\!_{\infty}(\mathbb{Z})\right)^{2}$ and put $(\alpha_1,\alpha_2)\mathfrak{h}=(\alpha_2,\alpha_1)$. It is obvious that so defined map $\mathfrak{h}$ is an automorphism of the semigroup $\mathscr{I\!O}\!_{\infty}(\mathbb{Z}^2_{\operatorname{lex}})$. It is easy to see that the restriction of an inner automorphism of an arbitrary monoid onto its group of units is an inner automorphism. Therefore it is complete to show that the restriction $\mathfrak{h}|_{H(\mathbb{I})}\colon H(\mathbb{I})\rightarrow H(\mathbb{I})$ is not an inner automorphism. Suppose to the contrary: the automorphism $\mathfrak{h}\colon \mathscr{I\!O}\!_{\infty}(\mathbb{Z}^2_{\operatorname{lex}})\rightarrow \mathscr{I\!O}\!_{\infty}(\mathbb{Z}^2_{\operatorname{lex}})$ is inner. By Proposition~\ref{proposition-2.6} the group of units of $\left(\mathscr{I\!O}\!_{\infty}(\mathbb{Z})\right)^{2}$ is isomorphic to $\left(\mathbb{Z}(+)\right)^2$, and since the group $\left(\mathscr{I\!O}\!_{\infty}(\mathbb{Z})\right)^{2}$ is commutative we get that the restriction $\mathfrak{h}|_{H(\mathbb{I})}\colon H(\mathbb{I})\rightarrow H(\mathbb{I})$ is trivial, a contradiction. The obtained contradiction implies that the automorphism $\mathfrak{h}\colon \mathscr{I\!O}\!_{\infty}(\mathbb{Z}^2_{\operatorname{lex}})\rightarrow \mathscr{I\!O}\!_{\infty}(\mathbb{Z}^2_{\operatorname{lex}})$ is not inner.

Also, the above implies that in the case when $n>2$ we have that the automorphism $\mathfrak{h}\colon \mathscr{I\!O}\!_{\infty}(\mathbb{Z}^n_{\operatorname{lex}})\rightarrow \mathscr{I\!O}\!_{\infty}(\mathbb{Z}^n_{\operatorname{lex}})$ defined by the formula $(\alpha_1,\alpha_2,\alpha_3,\ldots,\alpha_n)\mathfrak{h}= (\alpha_2,\alpha_1,\alpha_3,\ldots,\alpha_n)$ is not inner.
\end{example}

\section{On topologizations of the semigroup
$\mathscr{I}^{\!\nearrow}_{\infty}(\mathbb{Z}^n_{\operatorname{lex}})$}

\begin{theorem}\label{theorem-4.1}
Every Baire topology $\tau$ on
$\mathscr{I\!O}\!_{\infty}(\mathbb{Z}^n_{\operatorname{lex}})$ such that
$(\mathscr{I\!O}\!_{\infty}(\mathbb{Z}^n_{\operatorname{lex}}),\tau)$ is a
Hausdorff semitopological semigroup is discrete.
\end{theorem}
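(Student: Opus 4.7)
The plan is to first produce one isolated point from the Baire hypothesis and then propagate isolation to every element via bisimpleness of $\mathscr{I\!O}\!_\infty(\mathbb{Z}^n_{\operatorname{lex}})$ together with the finiteness of translation fibres recorded in Proposition~\ref{proposition-2.8}. Since the semigroup is countable and $\tau$ is Hausdorff, every singleton is closed. If no singleton were open, every singleton would be a closed set with empty interior, hence nowhere dense; writing the semigroup as the countable union of its singletons would present the whole space as a countable union of nowhere dense sets. The Baire condition stated in the paper would then force this union to be co-dense in $\mathscr{I\!O}\!_\infty(\mathbb{Z}^n_{\operatorname{lex}})$, contradicting the fact that its complement is empty. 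Hence some $\gamma_0\in\mathscr{I\!O}\!_\infty(\mathbb{Z}^n_{\operatorname{lex}})$ satisfies $\{\gamma_0\}\in\tau$.

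Next I would fix any $\beta\in\mathscr{I\!O}\!_\infty(\mathbb{Z}^n_{\operatorname{lex}})$ and show $\{\beta\}\in\tau$. By Proposition~\ref{proposition-2.4}$(ix)$ the semigroup is bisimple, so $\beta\mathscr{D}\gamma_0$; invoking the decomposition $\mathscr{D}=\mathscr{L}\circ\mathscr{R}$ produces an element $\chi$ with $\beta\mathscr{L}\chi$ and $\chi\mathscr{R}\gamma_0$, whence elements $\delta_1,\delta_2\in \mathscr{I\!O}\!_\infty(\mathbb{Z}^n_{\operatorname{lex}})$ exist with $\delta_1\beta=\chi$ and $\chi\delta_2=\gamma_0$, so that $\delta_1\beta\delta_2=\gamma_0$. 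Since $(\mathscr{I\!O}\!_\infty(\mathbb{Z}^n_{\operatorname{lex}}),\tau)$ is a semitopological semigroup, the map $\phi\colon x\mapsto \delta_1 x\delta_2$ is continuous as the composition of the continuous left and right translations. Therefore $U:=\phi^{-1}(\{\gamma_0\})$ is open and contains $\beta$. Moreover $U$ is finite: applying Proposition~\ref{proposition-2.8} to the equation $\delta_1 y=\gamma_0$ bounds $y=x\delta_2$ to finitely many possibilities, and for each such $y$ the equation $x\delta_2=y$ again has finitely many solutions by the same proposition. Since $U$ is a finite open subset of a Hausdorff space, the set $U\setminus\{\beta\}$ is closed, and therefore $\{\beta\}=U\setminus(U\setminus\{\beta\})$ is open. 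As $\beta$ was arbitrary, $\tau$ is discrete.

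The main obstacle I foresee is justifying the two-sided factorization $\delta_1\beta\delta_2=\gamma_0$: one must ensure that $\beta\mathscr{D}\gamma_0$ yields $\delta_1,\delta_2$ genuinely in $\mathscr{I\!O}\!_\infty(\mathbb{Z}^n_{\operatorname{lex}})$ rather than only in the adjoined monoid $S^1$, which is fine here because $\mathbb{I}$ already belongs to $\mathscr{I\!O}\!_\infty(\mathbb{Z}^n_{\operatorname{lex}})$, but needs to be stated carefully when one of $\delta_1,\delta_2$ is the identity. Once the factorization is in place, the remainder of the argument is a routine combination of Proposition~\ref{proposition-2.8}, separate continuity of translations, and Hausdorff separation inside the finite set $U$.
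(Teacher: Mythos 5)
Your proposal is correct and follows essentially the same route as the paper's own proof: Baire plus countability plus Hausdorffness yields an isolated point, bisimplicity (Proposition~\ref{proposition-2.4}$(ix)$) gives a factorization $\delta_1\beta\delta_2=\gamma_0$, separate continuity makes the preimage of $\{\gamma_0\}$ under $x\mapsto\delta_1 x\delta_2$ an open set, and Proposition~\ref{proposition-2.8} plus Hausdorff separation shows this finite open set isolates $\beta$. Your extra care about extracting $\delta_1,\delta_2$ from $\mathscr{D}=\mathscr{L}\circ\mathscr{R}$ (and the $S^1$ versus $S$ point, harmless since $\mathbb{I}$ is present) and the two-step application of Proposition~\ref{proposition-2.8} merely spells out details the paper leaves implicit.
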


\begin{proof}
If no point in $\mathscr{I\!O}\!_{\infty}(\mathbb{Z}^n_{\operatorname{lex}})$ is
isolated,  then since
$(\mathscr{I\!O}\!_{\infty}(\mathbb{Z}^n_{\operatorname{lex}}),\tau)$ is Hausdorff,
it follows that $\{\alpha\}$ is nowhere dense for all
$\alpha\in\mathscr{I\!O}\!_{\infty}(\mathbb{Z}^n_{\operatorname{lex}}))$. But, if
this is the case, then since
$\mathscr{I\!O}\!_{\infty}(\mathbb{Z}^n_{\operatorname{lex}})$ is countable it
cannot be a Baire space. Hence
$\mathscr{I\!O}\!_{\infty}(\mathbb{Z}^n_{\operatorname{lex}})$ contains an isolated
point $\mu$. If
$\gamma\in\mathscr{I\!O}\!_{\infty}(\mathbb{Z}^n_{\operatorname{lex}})$ is
arbitrary, then by Proposition~\ref{proposition-2.4}~$(ix)$, there
exist $\alpha,\beta\in
\mathscr{I\!O}\!_{\infty}(\mathbb{Z}^n_{\operatorname{lex}})$ such that
$\alpha\cdot\gamma\cdot\beta=\mu$. The map
$f\colon\chi\mapsto\alpha\cdot\chi\cdot\beta$ is continuous and so
$(\{\mu\})f^{-1}$ is open. By Proposition~\ref{proposition-2.8},
$(\{\mu\})f^{-1}$ is finite and since
$(\mathscr{I\!O}\!_{\infty}(\mathbb{Z}^n_{\operatorname{lex}}),\tau)$ is Hausdorff,
$\{\gamma\}$ is open, and hence isolated.
\end{proof}

Since every \v{C}ech complete space (and hence every locally compact
space) is Baire, Theorem~\ref{theorem-4.1} implies
Corollaries~\ref{corollary-4.2} and \ref{corollary-4.3}.

\begin{corollary}\label{corollary-4.2}
Every Hausdorff \v{C}ech complete (locally compact) topology $\tau$
on $\mathscr{I\!O}\!_{\infty}(\mathbb{Z}^n_{\operatorname{lex}})$ such that
$(\mathscr{I\!O}\!_{\infty}(\mathbb{Z}^n_{\operatorname{lex}}),\tau)$ is a
Hausdorff semitopological semigroup is discrete.
\end{corollary}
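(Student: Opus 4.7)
The plan is essentially to invoke Theorem~\ref{theorem-4.1} after observing that every topology satisfying the hypotheses of the corollary is automatically a Baire topology. Concretely, the corollary asserts the same discreteness conclusion as Theorem~\ref{theorem-4.1} but under the stronger (seemingly different) assumption of \v{C}ech completeness or local compactness; so the only thing to verify is that these classes of spaces fall inside the Baire class.

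First I would recall the classical Baire category theorem in the two forms actually needed: every \v{C}ech complete space is a Baire space (Engelking, Theorem~3.9.3), and every locally compact Hausdorff space is a Baire space (Engelking, Theorem~3.9.4). Both results are standard and are already in the bibliographic environment implicitly set up by the definitions in the preliminaries section, where \v{C}ech completeness and local compactness are both introduced alongside the Baire property. It is worth noting in the write-up that a locally compact Hausdorff space is Tychonoff and hence qualifies for the \v{C}ech completeness argument, so one single reduction suffices, but stating both versions keeps the corollary self-contained.

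Having done this, the second and final step is a direct application: if $\tau$ is a Hausdorff \v{C}ech complete (respectively, locally compact) topology on $\mathscr{I\!O}\!_{\infty}(\mathbb{Z}^n_{\operatorname{lex}})$ making the semigroup operation separately continuous, then $(\mathscr{I\!O}\!_{\infty}(\mathbb{Z}^n_{\operatorname{lex}}),\tau)$ is in particular a Baire Hausdorff semitopological semigroup, and Theorem~\ref{theorem-4.1} immediately gives that $\tau$ is discrete.

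There is no genuine obstacle here; the corollary is a formal consequence of Theorem~\ref{theorem-4.1} together with the Baire category theorem. The only thing to be careful about stylistically is to make it clear that the countability of $\mathscr{I\!O}\!_{\infty}(\mathbb{Z}^n_{\operatorname{lex}})$ (already used in the proof of Theorem~\ref{theorem-4.1}) is what allows the Baire hypothesis to force an isolated point, and hence why the enhancement from Baire to \v{C}ech complete or locally compact does not require any new argument beyond this citation chain.
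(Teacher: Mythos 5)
Your proposal is correct and follows exactly the paper's route: the paper also deduces the corollary by noting that every \v{C}ech complete (hence every locally compact) space is a Baire space and then applying Theorem~\ref{theorem-4.1}. Nothing further is needed.
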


\begin{corollary}\label{corollary-4.3}
Every Hausdorff Baire topology (and hence \v{C}ech complete or
locally compact topology) $\tau$ on
$\mathscr{I\!O}\!_{\infty}(\mathbb{Z}^n_{\operatorname{lex}})$ such that
$(\mathscr{I\!O}\!_{\infty}(\mathbb{Z}^n_{\operatorname{lex}}),\tau)$ is a
Hausdorff topological semigroup is discrete.
\end{corollary}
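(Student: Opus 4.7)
The plan is to observe that Corollary~\ref{corollary-4.3} is an immediate consequence of Theorem~\ref{theorem-4.1}, not a statement requiring any genuinely new argument. A topological semigroup, by definition, has jointly continuous multiplication, and joint continuity trivially implies separate continuity in each variable. Hence every Hausdorff topological semigroup structure on $\mathscr{I\!O}\!_{\infty}(\mathbb{Z}^n_{\operatorname{lex}})$ is in particular a Hausdorff semitopological semigroup structure on the same set, so any Hausdorff Baire topology $\tau$ that turns $\mathscr{I\!O}\!_{\infty}(\mathbb{Z}^n_{\operatorname{lex}})$ into a topological semigroup certainly satisfies the hypothesis of Theorem~\ref{theorem-4.1}, and we conclude that $\tau$ is discrete.

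For the parenthetical clause, one invokes the classical facts that locally compact Hausdorff spaces are Baire (this is the Baire Category Theorem in its locally compact form) and that \v{C}ech complete spaces are Baire (see, e.g., Theorem~3.9.3 of \cite{Engelking1989}), so any such topology is covered by the Baire case already. This part was also implicitly exploited in passing from Theorem~\ref{theorem-4.1} to Corollary~\ref{corollary-4.2}, so no additional work is needed.

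There is essentially no obstacle here: the content of the result lives entirely in Theorem~\ref{theorem-4.1}, whose proof combines the countability of $\mathscr{I\!O}\!_{\infty}(\mathbb{Z}^n_{\operatorname{lex}})$ with Proposition~\ref{proposition-2.4}$(ix)$ (bisimplicity, yielding $\alpha\gamma\beta=\mu$ for any fixed isolated $\mu$) and Proposition~\ref{proposition-2.8} (finiteness of the fiber of the continuous map $\chi\mapsto\alpha\chi\beta$ over $\mu$). The corollary itself is purely an observation that the hypotheses have been strengthened (topological $\Rightarrow$ semitopological) and that Čech completeness and local compactness are special cases of the Baire property, so I would present it as a one-line deduction rather than a new proof.
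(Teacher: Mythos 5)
Your deduction is correct and coincides with the paper's: Corollary~\ref{corollary-4.3} is obtained by noting that a topological semigroup is in particular a semitopological semigroup, so Theorem~\ref{theorem-4.1} applies, with the parenthetical cases covered because \v{C}ech complete (hence locally compact Hausdorff) spaces are Baire. No further commentary is needed.
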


\begin{remark}\label{remark-4.4}
Example~4.4 and Proposition~4.5 from \cite{GutikRepovs2012} show that there exists a non-discrete Tychonoff topology $\tau_W$ on the semigroup
$\mathscr{I}^{\!\nearrow}_{\infty}(\mathbb{Z})$ such that
$(\mathscr{I}^{\!\nearrow}_{\infty}(\mathbb{Z}),\tau_W)$ is a
topological inverse semigroup. Then by Proposition~\ref{proposition-2.3} we get that for every positive integer $n$ there exists a non-discrete Tychonoff topology $\tau_W^n$ on the semigroup
$\mathscr{I\!O}\!_{\infty}(\mathbb{Z}^n_{\operatorname{lex}})$ such that
$(\mathscr{I\!O}\!_{\infty}(\mathbb{Z}^n_{\operatorname{lex}}),\tau_W^n)$ is a
topological inverse semigroup.
\end{remark}

\begin{theorem}\label{theorem-4.5}
Let $n$ be a positive integer and $S$ be a topological semigroup which contains
$\mathscr{I\!O}\!_{\infty}(\mathbb{Z}^n_{\operatorname{lex}})$ as a dense discrete
subsemigroup. If
$I=S\setminus\mathscr{I\!O}\!_{\infty}(\mathbb{Z}^n_{\operatorname{lex}})
\neq\varnothing$ then $I$ is an ideal of $S$.
\end{theorem}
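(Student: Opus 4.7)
Write $T := \mathscr{I\!O}\!_{\infty}(\mathbb{Z}^n_{\operatorname{lex}})$. The strategy is to assume for contradiction that $st \in T$ (or symmetrically $ts \in T$) for some $s \in I$ and $t \in S$, and to derive a violation of the density of $T$ in $S$. As a preliminary I would establish that each element of $T$ is in fact an open singleton of the whole space $S$, not merely isolated in the subspace $T$. Indeed, given $\alpha \in T$, the discreteness hypothesis yields an open $U \subseteq S$ with $U \cap T = \{\alpha\}$; since $S$ is Hausdorff, $\{\alpha\}$ is closed, so $U \setminus \{\alpha\}$ is open in $S$, misses $T$, and must therefore be empty by density of $T$. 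Hence $\{\alpha\}$ is open in $S$.

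Now suppose $\gamma := s \cdot t \in T$ with $s \in I$, $t \in S$. Because $\{\gamma\}$ is open in $S$ and multiplication is jointly continuous, I can choose open neighborhoods $U$ of $s$ and $V$ of $t$ with $U \cdot V \subseteq \{\gamma\}$; since $\gamma \in U \cdot V$ this gives $U \cdot V = \{\gamma\}$. Density of $T$ supplies some $\beta \in V \cap T$, and then every $\alpha \in U \cap T$ satisfies $\alpha \cdot \beta = \gamma$. Here Proposition~\ref{proposition-2.8} enters: the set $\{\chi \in T : \chi \cdot \beta = \gamma\}$ is finite, so $U \cap T$ is finite as well.

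The contradiction is then immediate. Since $U \cap T$ is a finite subset of the Hausdorff space $S$ it is closed, so $U \setminus (U \cap T)$ is open; it contains $s$ (because $s \in I$ lies outside $T$, hence outside $U \cap T$) but meets $T$ in the empty set, contradicting the density of $T$ in $S$. Thus $st \in I$. The symmetric argument, swapping the roles of the two factors and invoking the other half of Proposition~\ref{proposition-2.8} (finiteness of $\{\chi \in T : \alpha \cdot \chi = \gamma\}$), yields $ts \in I$, so $I$ is a two-sided ideal. I do not anticipate a serious obstacle: the whole argument turns on converting the finiteness furnished by Proposition~\ref{proposition-2.8} into a topological contradiction with density, and the only mild verification needed is that $s \notin U \cap T$ so that removing this finite closed set still leaves a genuine open neighbourhood of $s$, which is immediate from $s \in I$.
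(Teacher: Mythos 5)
Your proof is correct and takes essentially the same route as the paper: openness of $\mathscr{I\!O}\!_{\infty}(\mathbb{Z}^n_{\operatorname{lex}})$ (indeed of its singletons) in $S$, joint continuity producing neighbourhoods with $U\cdot V=\{\gamma\}$, the finiteness of solution sets from Proposition~\ref{proposition-2.8}, and a contradiction with density. The only cosmetic differences are that you verify openness of the singletons directly instead of invoking Engelking's Theorem~3.5.8, and by letting the second factor range over all of $S$ you handle the paper's three cases ($I\cdot\mathscr{I\!O}\!_{\infty}(\mathbb{Z}^n_{\operatorname{lex}})$, $\mathscr{I\!O}\!_{\infty}(\mathbb{Z}^n_{\operatorname{lex}})\cdot I$ and $I\cdot I$) in one uniform argument.
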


\begin{proof}
Suppose that $I$ is not an ideal of $S$. Then at least one of the
following conditions holds:
\begin{equation*}
    1)~I\cdot\mathscr{I\!O}\!_{\infty}(\mathbb{Z}^n_{\operatorname{lex}})\nsubseteq I,
    \qquad 2)~\mathscr{I\!O}\!_{\infty}(\mathbb{Z}^n_{\operatorname{lex}})\cdot
    I\nsubseteq I,
    \qquad \mbox{or}
    \qquad 3)~I\cdot I\nsubseteq I.
\end{equation*}
Since $\mathscr{I\!O}\!_{\infty}(\mathbb{Z}^n_{\operatorname{lex}})$ is a dense
discrete subspace of $S$, Theorem~3.5.8 from~\cite{Engelking1989}
implies that $\mathscr{I\!O}\!_{\infty}(\mathbb{Z}^n_{\operatorname{lex}})$ is an
open subspace of $S$. Suppose there exist
$\alpha\in\mathscr{I\!O}\!_{\infty}(\mathbb{Z}^n_{\operatorname{lex}})$ and
$\beta\in I$ such that $\beta\cdot\alpha=\gamma\notin I$. Since
$\mathscr{I\!O}\!_{\infty}(\mathbb{Z}^n_{\operatorname{lex}})$ is a dense open
discrete subspace of $S$, the continuity of the semigroup operation
in $S$ implies that there exists an open neighbourhood $U(\beta)$ of
$\beta$ in $S$ such that $U(\beta)\cdot \{\alpha\}=\{\gamma\}$.
Hence we have that
$\big(U(\beta)\cap\mathscr{I\!O}\!_{\infty}(\mathbb{Z}^n_{\operatorname{lex}})\big)
\cdot \{\alpha\}=\{\gamma\}$ and the set
$U(\beta)\cap\mathscr{I\!O}\!_{\infty}(\mathbb{Z}^n_{\operatorname{lex}})$ is
infinite. But by Proposition~\ref{proposition-2.8}, the equation
$\chi\cdot\alpha=\gamma$ has finitely many solutions in
$\mathscr{I\!O}\!_{\infty}(\mathbb{Z}^n_{\operatorname{lex}})$. This contradicts
the assumption that $\beta\in
S\setminus\mathscr{I\!O}\!_{\infty}(\mathbb{Z}^n_{\operatorname{lex}})$. Therefore
$\beta\cdot\alpha=\gamma\in I$ and hence
$I\cdot\mathscr{I\!O}\!_{\infty}(\mathbb{Z}^n_{\operatorname{lex}})\subseteq I$.
The proof of the inclusion
$\mathscr{I\!O}\!_{\infty}(\mathbb{Z}^n_{\operatorname{lex}})\cdot I\subseteq I$ is
similar.

Suppose there exist $\alpha,\beta\in I$ such that $\alpha\cdot
\beta=\gamma\notin I$. Since
$\mathscr{I\!O}\!_{\infty}(\mathbb{Z}^n_{\operatorname{lex}})$ is a dense open
discrete subspace of $S$, the continuity of the semigroup operation
in $S$ implies that there exist open neighbourhoods $U(\alpha)$ and
$U(\beta)$ of $\alpha$ and $\beta$ in $S$, respectively, such that
$U(\alpha)\cdot U(\beta)=\{\gamma\}$. Hence we have that
$\big(U(\beta)\cap\mathscr{I\!O}\!_{\infty}(\mathbb{Z}^n_{\operatorname{lex}})\big)
\cdot
\big(U(\alpha)\cap\mathscr{I\!O}\!_{\infty}(\mathbb{Z}^n_{\operatorname{lex}})\big)
=\{\gamma\}$ and the sets
$U(\beta)\cap\mathscr{I\!O}\!_{\infty}(\mathbb{Z}^n_{\operatorname{lex}})$ and
$U(\alpha)\cap\mathscr{I\!O}\!_{\infty}(\mathbb{Z}^n_{\operatorname{lex}})$ are
infinite. But by Proposition~\ref{proposition-2.8}, the equations
$\chi\cdot\beta=\gamma$ and $\alpha\cdot\kappa=\gamma$ have finitely
many solutions in $\mathscr{I\!O}\!_{\infty}(\mathbb{Z}^n_{\operatorname{lex}})$.
This contradicts the assumption that $\alpha,\beta\in
S\setminus\mathscr{I\!O}\!_{\infty}(\mathbb{Z}^n_{\operatorname{lex}})$. Therefore
$\alpha\cdot\beta=\gamma\in I$ and hence $I\cdot I\subseteq I$.
\end{proof}

\begin{proposition}\label{proposition-4.6}
Let $n$ be a positive integer and $S$ be a Hausdorff topological semigroup which contains
$\mathscr{I\!O}\!_{\infty}(\mathbb{Z}^n_{\operatorname{lex}})$ as a dense discrete
subsemigroup. Then for every
$\gamma\in\mathscr{I\!O}\!_{\infty}(\mathbb{Z}^n_{\operatorname{lex}})$ the set
\begin{equation*}
    D_\gamma=\left\{(\chi,\varsigma)\in
    \mathscr{I\!O}\!_{\infty}(\mathbb{Z}^n_{\operatorname{lex}})
    \times\mathscr{I\!O}\!_{\infty}(\mathbb{Z}^n_{\operatorname{lex}})\mid
    \chi\cdot \varsigma=\gamma\right\}
\end{equation*}
is a closed-and-open subset of $S\times S$.
\end{proposition}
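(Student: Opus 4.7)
The plan is to show that the set $D_\gamma$ coincides with $m^{-1}(\{\gamma\})$, where $m\colon S\times S\to S$ is the continuous multiplication of $S$, and then to read the conclusion off from the fact that $\{\gamma\}$ is a clopen singleton in $S$. This reduces the whole question to two ingredients already available in the paper: the openness of $\mathscr{I\!O}\!_{\infty}(\mathbb{Z}^n_{\operatorname{lex}})$ in $S$ and Theorem~\ref{theorem-4.5} on the ideal property of the remainder.

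First I would establish that $\{\gamma\}$ is a clopen subset of $S$. Since $\mathscr{I\!O}\!_{\infty}(\mathbb{Z}^n_{\operatorname{lex}})$ is dense in $S$ and carries the discrete topology, Theorem~3.5.8 of \cite{Engelking1989} (used already in the proof of Theorem~\ref{theorem-4.5}) yields that $\mathscr{I\!O}\!_{\infty}(\mathbb{Z}^n_{\operatorname{lex}})$ is open in $S$; hence the singleton $\{\gamma\}$, being open in the discrete open subspace, is open in $S$, and by Hausdorffness of $S$ it is also closed. Continuity of $m$ then guarantees that $m^{-1}(\{\gamma\})$ is a clopen subset of $S\times S$.

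Next I would verify the equality $D_\gamma=m^{-1}(\{\gamma\})$. The inclusion $D_\gamma\subseteq m^{-1}(\{\gamma\})$ is immediate from the definition. For the reverse inclusion, set $I=S\setminus\mathscr{I\!O}\!_{\infty}(\mathbb{Z}^n_{\operatorname{lex}})$. If $I=\varnothing$ the claim is trivial; otherwise Theorem~\ref{theorem-4.5} asserts that $I$ is a two-sided ideal of $S$. Thus if a pair $(x,y)\in m^{-1}(\{\gamma\})$ had either coordinate in $I$, the inclusions $I\cdot\mathscr{I\!O}\!_{\infty}(\mathbb{Z}^n_{\operatorname{lex}})\subseteq I$, $\mathscr{I\!O}\!_{\infty}(\mathbb{Z}^n_{\operatorname{lex}})\cdot I\subseteq I$ and $I\cdot I\subseteq I$ would force $xy\in I$, contradicting $xy=\gamma\in\mathscr{I\!O}\!_{\infty}(\mathbb{Z}^n_{\operatorname{lex}})$. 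Therefore both coordinates lie in $\mathscr{I\!O}\!_{\infty}(\mathbb{Z}^n_{\operatorname{lex}})$ and the pair belongs to $D_\gamma$.

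Combining the two steps gives $D_\gamma=m^{-1}(\{\gamma\})$, so $D_\gamma$ is clopen in $S\times S$. I do not anticipate a real obstacle; the only delicate point is that without Theorem~\ref{theorem-4.5} one would only conclude that $D_\gamma$ is open (being the intersection of the open set $m^{-1}(\{\gamma\})$ with the open product $\mathscr{I\!O}\!_{\infty}(\mathbb{Z}^n_{\operatorname{lex}})\times\mathscr{I\!O}\!_{\infty}(\mathbb{Z}^n_{\operatorname{lex}})$), and it is precisely the ideal property of the remainder that upgrades openness to closedness by ruling out factorizations $\chi\cdot\varsigma=\gamma$ with $\chi$ or $\varsigma$ in $S\setminus\mathscr{I\!O}\!_{\infty}(\mathbb{Z}^n_{\operatorname{lex}})$.
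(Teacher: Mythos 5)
Your argument is correct and uses exactly the same ingredients as the paper's own proof: openness of the dense discrete copy of $\mathscr{I\!O}\!_{\infty}(\mathbb{Z}^n_{\operatorname{lex}})$ in $S$ (via Theorem~3.5.8 of \cite{Engelking1989}), continuity of the multiplication, and Theorem~\ref{theorem-4.5} to rule out factorizations $\chi\cdot\varsigma=\gamma$ with a factor in the remainder $I$. The only difference is presentational: you obtain both openness and closedness at once by writing $D_\gamma=m^{-1}(\{\gamma\})$ for the clopen singleton $\{\gamma\}$, while the paper deduces openness from discreteness and closedness by an accumulation-point contradiction; the mathematical content is the same.
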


\begin{proof}
Since $\mathscr{I\!O}\!_{\infty}(\mathbb{Z}^n_{\operatorname{lex}})$ is a discrete
subspace of $S$ we have that $D_\gamma$ is an open subset of
$S\times S$.

Suppose that there exists
$\gamma\in\mathscr{I\!O}\!_{\infty}(\mathbb{Z}^n_{\operatorname{lex}})$ such that
$D_\gamma$ is a non-closed subset of $S\times S$. Then there exists
an accumulation point $(\alpha,\beta)\in S\times S$ of the set
$D_\gamma$. The continuity of the semigroup operation in $S$ implies
that $\alpha\cdot\beta=\gamma$. But
$\mathscr{I\!O}\!_{\infty}(\mathbb{Z}^n_{\operatorname{lex}})\times
\mathscr{I\!O}\!_{\infty}(\mathbb{Z}^n_{\operatorname{lex}})$ is a discrete
subspace of $S\times S$ and hence by Theorem~\ref{theorem-4.5}, the
points $\alpha$ and $\beta$ belong to the ideal $I=S\setminus
\mathscr{I\!O}\!_{\infty}(\mathbb{Z}^n_{\operatorname{lex}})$ and hence
$\alpha\cdot \beta\in S\setminus \mathscr{I\!O}\!_{\infty}(\mathbb{Z}^n_{\operatorname{lex}})$ cannot be equal to $\gamma$.
\end{proof}

\begin{theorem}\label{theorem-4.7}
If a Hausdorff topological semigroup $S$ contains
$\mathscr{I\!O}\!_{\infty}(\mathbb{Z}^n_{\operatorname{lex}})$ as a dense discrete
subsemigroup for some positive integer $n$ then the square $S\times S$ cannot be pseudocompact.
\end{theorem}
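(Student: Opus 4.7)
The plan is to exhibit inside $S\times S$ an infinite clopen subset whose points are all isolated in $S\times S$, and then use it to define an unbounded continuous real-valued function $f\colon S\times S\to\mathbb{R}$. Such an $f$ contradicts pseudocompactness, since the preimage family $\{f^{-1}((k-1,k+1))\}_{k\in\mathbb{Z}}$ is a locally finite open cover of $S\times S$ which would have to be finite and thus force $f$ to be bounded.

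For the clopen set, I would apply Proposition~\ref{proposition-4.6} with $\gamma=\mathbb{I}$. The set $D_{\mathbb{I}}=\{(\chi,\varsigma)\in\mathscr{I\!O}\!_{\infty}(\mathbb{Z}^n_{\operatorname{lex}})\times\mathscr{I\!O}\!_{\infty}(\mathbb{Z}^n_{\operatorname{lex}}):\chi\cdot\varsigma=\mathbb{I}\}$ is clopen in $S\times S$, and it is countably infinite: Proposition~\ref{proposition-2.4}$(viii)$ with $\varepsilon=\varphi=\mathbb{I}$ produces infinitely many factorisations $\alpha\cdot\beta=\mathbb{I}$, and equivalently each unit $u\in H(\mathbb{I})$ contributes the pair $(u,u^{-1})\in D_{\mathbb{I}}$ with $H(\mathbb{I})\cong(\mathbb{Z}(+))^n$ by Proposition~\ref{proposition-2.6}. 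Moreover, since $\mathscr{I\!O}\!_{\infty}(\mathbb{Z}^n_{\operatorname{lex}})$ is a dense discrete subspace of the Hausdorff space $S$, Theorem~3.5.8 of~\cite{Engelking1989} makes it open in $S$, so $\mathscr{I\!O}\!_{\infty}(\mathbb{Z}^n_{\operatorname{lex}})\times\mathscr{I\!O}\!_{\infty}(\mathbb{Z}^n_{\operatorname{lex}})$ is open and discrete in $S\times S$; in particular every point of $D_{\mathbb{I}}$ is isolated in $S\times S$.

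To finish, enumerate $D_{\mathbb{I}}=\{(x_k,y_k):k\in\mathbb{N}\}$ and define $f(x_k,y_k)=k$ for every $k$ and $f\equiv 0$ on $(S\times S)\setminus D_{\mathbb{I}}$. Continuity at each $(x_k,y_k)$ is immediate because $\{(x_k,y_k)\}$ is open in $S\times S$; continuity at any $(a,b)\notin D_{\mathbb{I}}$ follows because $D_{\mathbb{I}}$ is closed, so $(S\times S)\setminus D_{\mathbb{I}}$ is an open neighbourhood of $(a,b)$ on which $f\equiv 0$. Thus $f$ is continuous and unbounded, completing the contradiction. The main obstacle is choosing the right object: the full family $\{D_\gamma\}_{\gamma}$ is in general not locally finite in $S\times S$ (at a point $(a,b)\in\mathscr{I\!O}\!_{\infty}(\mathbb{Z}^n_{\operatorname{lex}})\times I$ any neighbourhood $\{a\}\times W$ meets infinitely many $D_\gamma$, since by Proposition~\ref{proposition-2.8} the map $y\mapsto ay$ is finite-to-one on $\mathscr{I\!O}\!_{\infty}(\mathbb{Z}^n_{\operatorname{lex}})$), so one cannot reach the desired contradiction through local finiteness of the whole family; the key insight is that a single clopen $D_\gamma$ which happens to be infinite already suffices.
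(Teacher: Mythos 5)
Your proof is correct and follows essentially the same route as the paper, which simply defers to the analogous Theorem~5.1(3) of \cite{BanakhDimitrovaGutik2010}: one uses Proposition~\ref{proposition-4.6} to get an infinite clopen discrete subset $D_{\mathbb{I}}$ of $S\times S$ (infinite e.g.\ by Proposition~\ref{proposition-2.4}$(viii)$ or via $H(\mathbb{I})\cong(\mathbb{Z}(+))^n$) and derives a contradiction with pseudocompactness. Your only deviation is cosmetic: you realize the contradiction through an explicit unbounded continuous function and the locally finite cover $\{f^{-1}((k-1,k+1))\}_{k\in\mathbb{Z}}$, rather than quoting that pseudocompactness passes to clopen subspaces, and this is done correctly.
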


The proof of Theorem~\ref{theorem-4.7} is similar to that of
Theorem~5.1$(3)$ of \cite{BanakhDimitrovaGutik2010}.

Recall that, a topological semigroup $S$ is called $\Gamma$-compact
if for every $x\in S$ the closure of the set $\{x,x^2,x^3,\ldots\}$
is a compactum in $S$ (see \cite{HildebrantKoch1988}). We recall
that the Stone-\v{C}ech compactification of a Tychonoff space $X$ is
a compact Hausdorff space $\beta X$ containing $X$ as a dense
subspace so that each continuous map $f\colon X\rightarrow Y$ to a
compact Hausdorff space $Y$ extends to a continuous map
$\overline{f}\colon \beta X\rightarrow Y$ \cite{Engelking1989}.

The proof of Corollary~\ref{corollary-4.8} is similar to that of
Corollary~4.9 of \cite{GutikRepovs2012}.

\begin{corollary}\label{corollary-4.8}
Let $n$ be a positive integer. If a topological semigroup $S$ satisfies one of the following conditions: $(i)$~$S$ is compact; $(ii)$~$S$ is $\Gamma$-compact; $(iii)$~the square $S\times S$ is countably compact; $(iv)$~$S$ is a countably compact topological inverse semigroup; or $(v)$~the square $S\times S$ is a Tychonoff pseudocompact space, then $S$ does not contain the semigroup
$\mathscr{I\!O}\!_{\infty}(\mathbb{Z}^n_{\operatorname{lex}})$.
\end{corollary}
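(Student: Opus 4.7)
The approach is to reduce each of the five conditions to the obstruction given by Theorem~\ref{theorem-4.7}, which forbids $S\times S$ from being pseudocompact whenever $\mathscr{I\!O}\!_{\infty}(\mathbb{Z}^n_{\operatorname{lex}})$ sits as a dense discrete subsemigroup. Assume toward a contradiction that $S$ satisfies one of (i)--(v) and contains $\mathscr{I\!O}\!_{\infty}(\mathbb{Z}^n_{\operatorname{lex}})$. First I would pass to the closed topological subsemigroup $T=\overline{\mathscr{I\!O}\!_{\infty}(\mathbb{Z}^n_{\operatorname{lex}})}$ of $S$. In each case $T$ inherits the relevant topological-algebraic hypothesis: a closed subspace of a compact (resp.\ countably compact, pseudocompact) space retains that property; a closed subsemigroup of a $\Gamma$-compact semigroup is $\Gamma$-compact; and a closed inverse subsemigroup of a countably compact topological inverse semigroup is again such.

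Next I would verify that $\mathscr{I\!O}\!_{\infty}(\mathbb{Z}^n_{\operatorname{lex}})$ is a \emph{discrete} subspace of $T$. In the five cases $T$ is either compact, locally compact, \v{C}ech complete, Tychonoff pseudocompact, or at worst Baire, so the subspace topology inherited by $\mathscr{I\!O}\!_{\infty}(\mathbb{Z}^n_{\operatorname{lex}})$ from $T$ makes it into a Hausdorff semitopological semigroup of the kind considered in Theorem~\ref{theorem-4.1} and Corollaries~\ref{corollary-4.2}--\ref{corollary-4.3}, which then force discreteness. Thus $\mathscr{I\!O}\!_{\infty}(\mathbb{Z}^n_{\operatorname{lex}})$ is a dense discrete subsemigroup of $T$, and Theorem~\ref{theorem-4.7} applies and yields that $T\times T$ cannot be pseudocompact.

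Finally, I would derive the contradiction by showing that in every case the square $T\times T$ actually is pseudocompact. In case $(i)$, $T\times T$ is compact; in $(iii)$ it is countably compact, hence pseudocompact; in $(v)$ this is direct. For case $(iv)$ one uses that the continuous inversion together with countable compactness of $T$ makes $T\times T$ countably compact (the standard reduction for topological inverse semigroups). For case $(ii)$ one uses that the product of two $\Gamma$-compact topological semigroups is $\sigma$-compact, and hence its Tychonoff image is pseudocompact. In each case the conclusion contradicts Theorem~\ref{theorem-4.7}, and the proof is complete.

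The main obstacle will be the technicalities in cases~(ii) and~(iv): one has to be careful that the subspace topology on $\mathscr{I\!O}\!_{\infty}(\mathbb{Z}^n_{\operatorname{lex}})$ is actually Baire in the ambient $T$ (the Baire property does not automatically descend to dense subspaces) and that $T\times T$ inherits pseudocompactness. The safest route is to mimic the proof of Corollary~4.9 of~\cite{GutikRepovs2012}, which handles these cases in the simpler setting $n=1$ and transfers verbatim to the present situation through the direct-product decomposition supplied by Proposition~\ref{proposition-2.3}$(iv)$.
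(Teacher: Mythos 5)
Your reduction to Theorem~\ref{theorem-4.7} has a genuine gap at the discreteness step, and two of the case analyses fail. To invoke Theorem~\ref{theorem-4.7} for $T=\overline{\mathscr{I\!O}\!_{\infty}(\mathbb{Z}^n_{\operatorname{lex}})}$ you must know that the topology which $\mathscr{I\!O}\!_{\infty}(\mathbb{Z}^n_{\operatorname{lex}})$ inherits from $T$ is discrete; Theorem~\ref{theorem-4.1} gives this only when that \emph{subspace} topology is itself Baire, and compactness, countable compactness or pseudocompactness of the ambient $T$ does not descend to a countable dense subspace (you note this obstacle yourself, but do not resolve it). Since Remark~\ref{remark-4.4} exhibits non-discrete Hausdorff inverse semigroup topologies on $\mathscr{I\!O}\!_{\infty}(\mathbb{Z}^n_{\operatorname{lex}})$, discreteness of the induced topology is not automatic, so the chain ``$T$ compact-like $\Rightarrow$ copy discrete $\Rightarrow$ Theorem~\ref{theorem-4.7} applies'' breaks down in every case. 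In addition, case $(ii)$ as you argue it is false: $\Gamma$-compactness does not give $\sigma$-compactness of the square, and $\sigma$-compactness would not give pseudocompactness anyway; and in case $(iv)$ countable compactness of a topological inverse semigroup does not imply that its square is countably compact (countable compactness is not productive, and continuity of inversion does not repair this) --- that case rests on the nontrivial theorem of \cite{GutikRepovs2007}, not on a product argument.

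The proof intended by the paper (it refers to Corollary~4.9 of \cite{GutikRepovs2012}) follows a different and shorter route that avoids all of this: $\mathscr{I\!O}\!_{\infty}(\mathbb{Z}^n_{\operatorname{lex}})$ contains an isomorphic copy of the bicyclic semigroup $\mathscr{C}(p,q)$. Indeed, $\varepsilon_{0[1]}\cdot\varepsilon_{0[1]}^{-1}=\mathbb{I}$ while $\varepsilon_{0[1]}^{-1}\cdot\varepsilon_{0[1]}\neq\mathbb{I}$, so the subsemigroup generated by $\varepsilon_{0[1]}$ and $\varepsilon_{0[1]}^{-1}$ is bicyclic by the classical criterion of \cite{CP}. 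Now one quotes the known non-embeddability results recalled in the introduction: neither stable (in particular compact) nor $\Gamma$-compact topological semigroups contain a copy of $\mathscr{C}(p,q)$ \cite{AHK, HildebrantKoch1988}; a countably compact topological inverse semigroup cannot contain it \cite{GutikRepovs2007}; and if $S\times S$ is countably compact or Tychonoff pseudocompact, then $S$ cannot contain it \cite{BanakhDimitrovaGutik2009, BanakhDimitrovaGutik2010}. Since every Hausdorff semigroup topology on $\mathscr{C}(p,q)$ is discrete \cite{EberhartSelden1969}, these results apply to an arbitrary subsemigroup copy, so no $S$ satisfying $(i)$--$(v)$ can contain $\mathscr{I\!O}\!_{\infty}(\mathbb{Z}^n_{\operatorname{lex}})$, with no density or discreteness hypotheses needed --- which is exactly why the corollary is stated for containment as a subsemigroup rather than as a dense discrete one.
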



\end{document}